\newtheorem{theorem}{Theorem}
\newtheorem{lemma}[theorem]{Lemma}
\newtheorem{proposition}[theorem]{Proposition}
\newcommand{\CC}{\mathbb{C}}
\newcommand{\RR}{\mathbb{R}}
\newcommand{\ZZ}{\mathbb{Z}}
\newcommand{\NN}{\mathbb{N}}
\newtheorem*{theorem*}{Theorem}
\newtheorem*{proposition*}{Proposition}
\newtheorem{alphateo}{Theorem}
\newlength\titlebox 
\title{Monodromy kernels for strata of translation surfaces}
\author{Riccardo Giannini}
\newcommand{\Addresses}{{
  \bigskip
  \footnotesize

  \par\nopagebreak
  \textit{E-mail address}, R.~Giannini: \texttt{r.giannini.1@research.gla.ac.uk}

  \medskip
}}
\date{} 
\begin{document}

\maketitle

\begin{abstract} 
\noindent 
    {\footnotesize The non-hyperelliptic connected components of the strata of translation surfaces are conjectured to be orbifold classifying spaces for some groups commensurable to some mapping class groups. The topological monodromy map of the non-hyperelliptic components projects naturally to the mapping class group of the underlying punctured surface and is an obvious candidate to test commensurability. In the present article, we prove that for the components $\mathcal{H}(3,1)$ and $\mathcal{H}^{nh}(4)$ in genus 3 the monodromy map fails to demonstrate the conjectured commensurability. In particular, building on work of Wajnryb, we prove that the kernels of the monodromy maps for $\mathcal{H}(3,1)$ and $\mathcal{H}^{nh}(4)$ are large, as they contain a non-abelian free group of rank $2$.} \end{abstract}

\section*{Introduction}

Translation surfaces and their moduli spaces naturally arise in the interplay of topology, algebraic geometry, dynamics and number theory as shown through the work of Veech \cite{Veech}, Masur \cite{Masur82}, Thurston \cite{Thurston} and many subsequent authors. The topology of the moduli spaces of Riemann surfaces is somewhat understood; see, for example, the work of Harer-Zagier \cite{HarerZagier} and Maclachlan \cite{Maclachlan}. Much less is known about the topology of the moduli space of translation surfaces.

In this article, we analyze the topological monodromy map of the connected components $\mathcal{H}^{nh}(4)$ and $\mathcal{H}(3,1)$ of the moduli space of translation surfaces in genus $3$. We show that the topological monodromy maps from the orbifold fundamental groups onto the images in the respective mapping class groups are far from being isomorphisms. In particular, we prove that the kernels contain a non-abelian free group of rank $2$ by relating the topological monodromy maps to some geometric homomorphisms of Artin groups.  

\textbf{Translation surfaces.} Let $\Sigma_g$ denote a closed oriented surface of genus $g$ and let $\mathcal{Z}\subset\Sigma_g$ be a finite set of points. A \textit{translation structure} on $\Sigma_g$ is an atlas of charts with values in $\CC$ where the transition maps of $\Sigma_g\setminus\mathcal{Z}$ are translation, points in $\mathcal{Z}$ are cone type singularities and the holonomy $\pi_1(\Sigma_g\setminus\mathcal{Z})\rightarrow \operatorname{SO}(2)$ is trivial. In particular, the complex structure on $\Sigma_g\setminus\mathcal{Z}$ can be extended to $\Sigma_g$ by Riemann's removable singularity theorem and the metric around each point $p\in\mathcal{Z}$ can be given by cyclically gluing half-planes around $p$.

A translation structure on $\Sigma_g$ can also be given by pairs of the form $(X,\omega)$, where $X$ is genus $g$ Riemann surface and $\omega$ is a non-zero holomorphic one form on $X$. The finite set $\mathcal{Z}$ is identified with $\mathcal{Z}(\omega)=\{p\in X\mid\omega_p\equiv0\}$. Since the holonomy around every cone singularity is trivial, the number $k_p$ of half-planes glued around each point $p\in\mathcal{Z}$ is even. The multiplicity of $\omega$ at the respective vanishing point is exactly $\frac{k_p}{2}+1$.

\textbf{Strata of translation surfaces.} The moduli space of genus $g$ translation surfaces is the set of all translation structures $(X,\omega)$ of $\Sigma_g$ up to isomorphisms. The whole moduli space can be stratified in orbifolds $\mathcal{H}(k_1,\dots,k_n)$ characterized by the combinatorial data given by the orders of $\omega$ at its zeros. 

Even though the topology of the strata of translation surfaces is poorly understood, our knowledge has improved in the past years. Costantini-M\"{o}ller-Zoachhuber gave a recursive computable formula for the Euler characteristic of the moduli space of translation surfaces \cite{Costantini2022}. Further, Zykoski has constructed a finite simplicial complex with the same homotopy type of the strata $\mathcal{H}(k_1,\dots,k_n)$, motivated by Harer's construction of a simplicial complex those quotient by the mapping class group is homotopic equivalent to the moduli space of Riemann surfaces \cite{zykoski2022isodelaunay}. 

Kontsevich-Zorich showed that each stratum has at most $3$ connected components and in every genus some components are \textit{hyperelliptic} \cite{Kontsevich2003}. Namely, hyperelliptic components consist of translation surfaces $(X,\omega)$ where $X$ is a hyperelliptic Riemann surface and $\iota^*(X,\omega)=(X,-\omega)$ where $\iota$ is the hyperelliptic involution of $X$. These connected components are orbifold classifying spaces for finite extensions of braid groups; see \cite[Section 1.4]{Looijenga2014} for a proof. Kontsevich-Zorich also conjectured that the other non-hyperelliptic components have orbifold fundamental groups commensurable with some mapping class group \cite{Kontsevich1997}; this conjecture is still open.

Our focus is to shed some light on the Kontsevich-Zorich conjecture for some non-hyperelliptic components in small complexity, by showing that the topological monodromy maps of some exceptional connected components in the mapping class group are far from being injective.

\textbf{Monodromy maps.} Let $\Sigma_{g,n}$ be a closed surface with $n$ marked points. The mapping class group $\operatorname{Mod}_{g,n}$ is the group of all orientation preserving self-diffeomorphism of $\Sigma_{g,n}$ that leave the set of marked points invariant, up to isotopies relative to the set of marked points. If $\mathcal{C}$ is a connected component of a stratum $\mathcal{H}(k_1,\dots,k_n)$, then any (orbifold) homotopy class of loops based at $(X,\omega)$ gives rise to some self-diffeomorphism of $X$ that preserves the zeros of $\omega$. These data are recorded by the (punctured) topological monodromy map:
$$\rho_\mathcal{C}:\pi_1^{orb}(\mathcal{C})\rightarrow\operatorname{Mod}_{g,n}.$$

Calderon studied these homomorphisms and described the connected components of the strata of \textit{marked translation surfaces} for genus $g\geq 5$, which cover the strata of translation surfaces  \cite{CalderonConnected2020}.  Then, Calderon-Salter's work resulted in a complete description of the images of the monodromy maps associated with all non-hyperelliptic connected components of the strata $\mathcal{H}(k_1,\dots,k_n)$ in genus $g\geq 5$. In other words, the orbifold fundamental groups of all non-hyperelliptic connected components in genus $g\geq 5$ are projected onto subgroups of the mapping class group called \textit{framed mapping class groups} \cite{CalderonSalterFramed2022}. 

\textbf{The kernel of the punctured monodromy.} If $\mathcal{C}$ is hyperelliptic then $\operatorname{Im}\rho_\mathcal{C}$ is isomorphic to the symmetric mapping class group $\operatorname{SMod}_{g,n}$, and $\ker\rho_\mathcal{C}$ is finite \cite[Section 2.1]{CalderonConnected2020}. In view of Kontsevich-Zorich conjecture, it is natural to ask whether or not the topological monodromy is the right homomorphism to look at in order to prove the conjecture. For this reason, we are interested in estimating the size of the kernels of the monodromies $\rho_\mathcal{C}$ for non-hyperelliptic connected components. The first main result of this paper is that in some cases the kernel is large.

\begin{alphateo}
\label{ker}
    Let $\rho_{\mathcal{H}^{nh}(4)}:\pi_1^{orb}(\mathcal{H}^{nh}(4))\rightarrow\operatorname{Mod}_{3,1}$ and $\rho_{\mathcal{H}(3,1)}:\pi_1^{orb}(\mathcal{H}(3,1))\rightarrow\operatorname{Mod}_{3,2}$ be the topological monodromy maps of the non-hyperelliptic connected components of $\mathcal{H}(4)$ and of $\mathcal{H}(3,1)$, respectively. The kernels of both the monodromies contain a non-abelian free group of rank 2. 
\end{alphateo}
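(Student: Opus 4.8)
The plan is to identify $\pi_1^{orb}(\mathcal{H}^{nh}(4))$ and $\pi_1^{orb}(\mathcal{H}(3,1))$, together with their monodromy maps, with data coming from the exceptional plane curve singularities $E_6\colon x^3+y^4=0$ and $E_7\colon x^3+xy^3=0$, and then to invoke Wajnryb's theorem that the geometric monodromy homomorphism of an exceptional Artin group into the mapping class group of the Milnor fibre is not injective, its kernel containing a non-abelian free group of rank $2$.

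\emph{Step 1: identify the stratum with a discriminant complement.} I would show that $\mathcal{H}^{nh}(4)$ is isomorphic, as a complex orbifold and compatibly with the $\CC^{\times}$-rescaling actions, to the complement of the discriminant in the miniversal deformation of the $E_6$ singularity, and $\mathcal{H}(3,1)$ to the corresponding discriminant complement for $E_7$. One sends a generic unfolding $x^{3}+y^{4}+\sum_i a_i\phi_i$ (the $\phi_i$ a monomial basis of the Milnor algebra, so $i$ runs over $\mu(E_6)=6$ values) to the smooth projective model of the affine curve it defines, equipped with the Poincaré residue differential $dx\wedge dy / f$, which on the smooth model becomes a holomorphic one-form with a single zero of order $4$ at the point at infinity coming from the quasi-homogeneous leading term $x^{3}+y^{4}$; for $E_7$, where $\mu=7$, one gets instead a holomorphic one-form with zeros of orders $3$ and $1$. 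Since $\mu(E_6)=6=\dim_{\CC}\mathcal{H}^{nh}(4)$ and $\mu(E_7)=7=\dim_{\CC}\mathcal{H}(3,1)$, a dimension count together with a check that this assignment is bijective up to isomorphism of translation surfaces gives the identification; the Brieskorn--Deligne description of discriminant complements of $ADE$ singularities then yields $\pi_1^{orb}(\mathcal{H}^{nh}(4))\cong A_{E_6}$ and $\pi_1^{orb}(\mathcal{H}(3,1))\cong A_{E_7}$, the Artin groups of types $E_6$ and $E_7$. (That it is the \emph{non}-hyperelliptic components which arise fits the finiteness of $\ker\rho_{\mathcal C}$ in the hyperelliptic case recalled above: $\mathcal{H}^{hyp}(4)$ corresponds instead to the $A_6$ singularity $x^{2}+y^{7}$, whose geometric monodromy is the classical faithful hyperelliptic braid representation.)

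\emph{Step 2: match the monodromy.} The Milnor fibre of $E_6$ is a compact genus-$3$ surface with one boundary circle (its germ is irreducible), and that of $E_7$ a compact genus-$3$ surface with two boundary circles (its two branches are a line and a cusp); capping each boundary circle with a once-marked disc, the marked points being the zeros of the differential, produces a closed genus-$3$ surface with one, respectively two, marked points, which accounts for the targets $\operatorname{Mod}_{3,1}$ and $\operatorname{Mod}_{3,2}$. Under the identification of Step 1 the tautological family of translation surfaces over the stratum is, fibrewise, this capped Milnor fibration, and since the geometric monodromy of the Milnor fibration is supported away from the zeros, $\rho_{\mathcal{H}^{nh}(4)}$ is conjugate to the composite $A_{E_6}\to\operatorname{Mod}(F_{E_6})\to\operatorname{Mod}_{3,1}$ of the geometric (mapping-class-group) monodromy representation with the boundary-capping homomorphism, and $\rho_{\mathcal{H}(3,1)}$ to the analogous composite for $E_7$. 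Equivalently, one may exhibit directly on the genus-$3$ surface an explicit $E_6$- (respectively $E_7$-) configuration of simple closed curves, realised as vanishing cycles for degenerations that stay inside the stratum, and check that $\rho$ sends the associated loops to the Dehn twists about these curves.

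\emph{Step 3, and the obstacle.} By Wajnryb's theorem the geometric monodromy of $A_{E_6}$, resp.\ $A_{E_7}$, into $\operatorname{Mod}(F_{E_6})$, resp.\ $\operatorname{Mod}(F_{E_7})$, is not injective and its kernel contains a free group of rank $2$; since capping a boundary circle with a once-marked disc only enlarges the kernel — the new contributions being generated by the central boundary Dehn twists — such a free group survives in $\ker\rho_{\mathcal{H}^{nh}(4)}$ and in $\ker\rho_{\mathcal{H}(3,1)}$, which is the assertion. The main obstacle is the geometric content of Steps 1--2: pinning down the isomorphism between each stratum and the relevant discriminant complement, and — more delicately — verifying that the topological monodromy is genuinely \emph{conjugate} to the geometric monodromy representation of $A_{E_6}$ or $A_{E_7}$, which demands careful bookkeeping of the marked points, of the pairing between the zeros of the differential and the boundary components of the $E_7$ Milnor fibre, and of the distinction between the non-hyperelliptic component and its $A$-type hyperelliptic sibling. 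Granting this dictionary, the theorem follows at once from Wajnryb's result.
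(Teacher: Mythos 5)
Your Steps 1--2 are in the spirit of what the paper actually does (it identifies the monodromies of $\mathbb{P}\mathcal{H}^{nh}(4)$ and $\mathbb{P}\mathcal{H}(3,1)$ with geometric homomorphisms of $A(E_6)$ and $A(E_7)$ via Pinkham's theory and the Shioda normal form of quartics with a flex), but Step 3 contains the decisive gap: you attribute to Wajnryb the statement that the kernel of the geometric monodromy contains a non-abelian free group of rank $2$. Wajnryb proved only that the geometric homomorphism is \emph{not injective}, by exhibiting a single non-trivial element $w\in A(E_6)$ in the kernel. Upgrading ``non-trivial kernel'' to ``kernel contains $F_2$'' is precisely the main new content of this paper (Theorem \ref{thmb}), and it is not formal: it requires the acylindrical action of $A(\Gamma)_\Delta$ on the additional length graph, showing that $w$ is an elliptic isometry with bounded $50\delta$-fixed set whose powers do not preserve the quasi-axis of the Calvez--Wiest loxodromic $\kappa$, and then the Abbott--Dahmani ping-pong criterion to produce $\langle w, \kappa^{-n}w\kappa^n\rangle\cong F_2$ inside the kernel. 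Without some such argument your proof establishes only that the kernels are non-trivial, not that they contain $F_2$.

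A secondary inaccuracy: the stratum is not isomorphic to the discriminant complement of the miniversal deformation. The discriminant complement has fundamental group $A(E_6)$ (resp.\ $A(E_7)$), whereas by Looijenga--Mondello it is the \emph{projectivized} stratum whose orbifold fundamental group is $A(\Gamma)_\Delta=A(\Gamma)/Z(A(\Gamma))$, and $\pi_1^{orb}(\mathcal{H}^{nh}(4))$, $\pi_1^{orb}(\mathcal{H}(3,1))$ are only infinite-cyclic central extensions of these quotients. What one actually has is a surjection $\pi_1(\CC^\mu\setminus\Pi)\twoheadrightarrow\pi_1^{orb}(\mathbb{P}\mathcal{C})$ intertwining the monodromies through the capping map; to transfer a free subgroup of the kernel along a surjection one must check injectivity on that subgroup, which the paper does by showing the induced endomorphism of $A(E_7)_\Delta$ is an isomorphism (residual finiteness, hence Hopfian). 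Since the free group of Theorem \ref{thmb} is generated by $w$ and a conjugate $\kappa^{-n}w\kappa^n$, it already lives in $\ker\bigl(A(\Gamma)_\Delta\to\operatorname{Mod}_{g,n}\bigr)$ and lifts to the central extensions $\pi_1^{orb}(\mathcal{C})$ because free groups lift through surjections; your version skips both of these bookkeeping steps.
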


The orbifold fundamental groups involved in the statement of  Theorem \ref{ker} are closely related to Artin groups. Looijenga-Mondello showed that the groups $\pi_1^{orb}(\mathcal{H}^{nh}(4))$ and $\pi_1^{orb}(\mathcal{H}(3,1))$ are infinite-cyclic central extensions of the inner automorphism groups of some Artin groups \cite{Looijenga2014}. It turns out that Theorem \ref{ker} is an example of a more general phenomenon related to \textit{geometric homomorphisms} from Artin groups to mapping class groups.

\textbf{Geometric homomorphisms.} If $\Gamma$ is a finite, connected and undirected simple graph with $\mathcal{V}(\Gamma)$ as its set of vertices, an {Artin group} is a group that admits a presentation of the following form
\begin{align}
    A(\Gamma):=\Bigg\langle a_1,\dots,a_n\in\mathcal{V}(\Gamma)\Bigg\mid
    \text{ }
    \begin{matrix}
        a_ia_ja_i=a_ja_ia_j & \text{if }a_i\text{ and }a_j\text{ are adjacent}\\
        a_ia_j=a_ja_i & \text{otherwise}
    \end{matrix}
    \text{ }
    \Bigg\rangle. \label{presArt} 
\end{align}

Roughly speaking, a geometric homomorphism $A(\Gamma)\rightarrow\operatorname{Mod}_{g,n}$ arises as the correspondence between the vertices of the defining graph $\Gamma$ and a family of simple closed curves on the surface $\Sigma_{g,n}$.  The standard Artin generators in the presentation (\ref{presArt}) map to Dehn twists about curves that respect the intersection pattern given by the defining graph; see Figure \ref{perron1}.

\begin{figure}[h]
    \centering
    \includegraphics[scale=0.2]{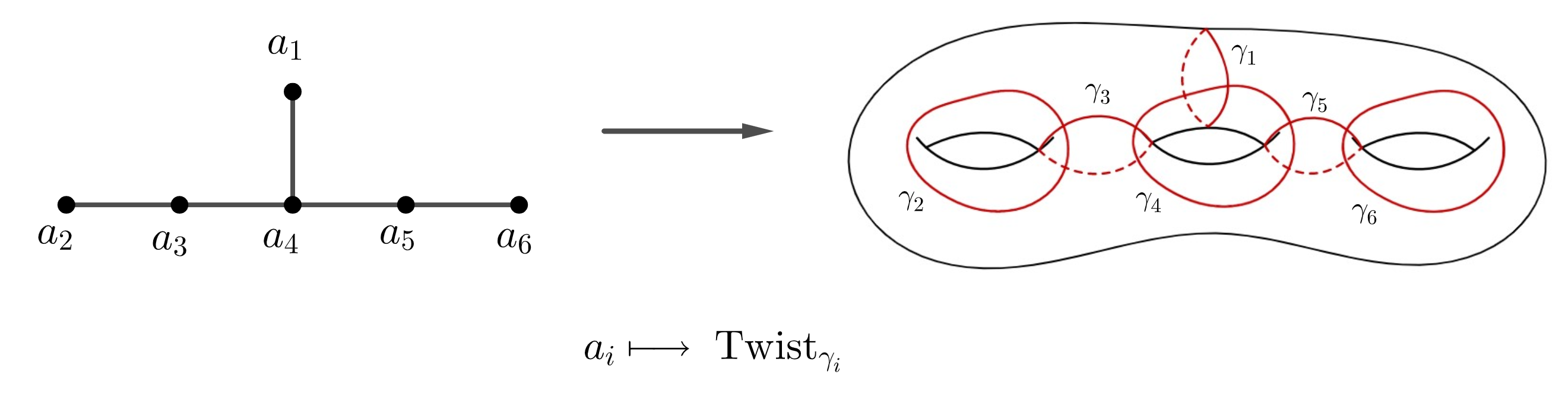}
    \caption{The map sending every standard generator $a_i$ to the Dehn twist $T_{\gamma_i}$ determines a geometric homomorphism of $A(E_6)$ to $\operatorname{Mod}_{3}$.}
    \label{perron1}
\end{figure}

Possibly, there might exist relations between Dehn twists that do not hold for standard generators of Artin groups. However, there is no known algorithm that can solve the word problem for a generic Artin group (for further details, see, for example \cite[Conjecture 5.2]{McCammond2017}), and this is the main obstruction to characterize kernels of geometric homomorphisms. 

Wajnryb proved that if the graph $\Gamma$ contains $E_6$ as a subgraph, any geometric homomorphism cannot be an injection \cite{Wajnryb1999}. In particular, Wajnryb found an element $w$ given explicitly in terms of the standard generators in the presentation (\ref{presArt}) and adopted the following strategy: as every inclusion of graphs induces a monomorphism of the respective Artin groups \cite{lek}, it is enough to find a non-trivial element $w$ in $A(E_6)$ which can be written in $\operatorname{Mod}_{3,1}$ as a braid relator of Dehn twists. The group $A(E_6)$ is a spherical-type Artin group, a class of groups for which the word problem has been solved by means of their \textit{Garside structure}. Our next result builds on Wajnryb's work and Theorem \ref{ker} can be thought of as a corollary of the following theorem.

\begin{alphateo}
\label{thmb}
    Let $\Gamma$ be any finite and undirect simple graph with $E_6$ as a subgraph. Any geometric homomorphism of $A(\Gamma)$ in $\operatorname{Mod}_{g,b}$ has a large kernel that contains a non-abelian free group $F_2$ of rank $2$. In particular, there is some $g\in A(\Gamma)$ such that $F_2$ is generated by the Wajnryb element $w$ and its conjugate $g^{-1}wg$.
\end{alphateo}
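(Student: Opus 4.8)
The plan is to leverage Wajnryb's element $w \in A(E_6)$, which is nontrivial in $A(E_6)$ but maps to the identity under any geometric homomorphism $\phi: A(\Gamma) \to \operatorname{Mod}_{g,b}$ (using the inclusion $A(E_6) \hookrightarrow A(\Gamma)$ from \cite{lek} and the fact that Wajnryb exhibited $w$ as a braid relator of the relevant Dehn twists). The core of the argument is to produce a second element $g^{-1}wg$ in $\ker\phi$ such that $w$ and $g^{-1}wg$ generate a free group of rank $2$. Since $\ker\phi$ is a normal subgroup, any conjugate $g^{-1}wg$ automatically lies in $\ker\phi$ for every $g \in A(\Gamma)$, so the entire task reduces to finding a single $g$ for which the subgroup $\langle w, g^{-1}wg\rangle$ is free of rank $2$.

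To establish freeness I would invoke the Tits alternative, or more directly a ping-pong argument, inside the spherical-type Artin group $A(E_6)$ (which suffices, since $A(E_6) \le A(\Gamma)$ and the relevant conjugator $g$ can be chosen inside $A(E_6)$). The key facts are: (i) $A(E_6)$ is torsion-free and has a solvable word problem via its Garside structure, so $w$ has infinite order and generates an infinite cyclic subgroup; (ii) $A(E_6)$ modulo its center is a known group — by Looijenga–Mondello it is closely tied to $\pi_1^{orb}$ of the strata, and more classically $A(E_6)/Z$ acts on the associated Deligne–Mostow / complex hyperbolic structure, or one may use the fact that central quotients of irreducible spherical Artin groups are not virtually abelian and satisfy the Tits alternative (e.g. via acylindrical hyperbolicity results of Calvez–Wiest). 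Concretely, I would argue that the centralizer of $w$ in $A(E_6)/Z$ is cyclic (or at least small), so that for a generic $g$ the cyclic subgroups $\langle w\rangle$ and $\langle g^{-1}wg\rangle$ generate their free product. The cleanest route: pick $g$ so that $g$ does not normalize $\langle w \rangle Z / Z$, which is possible precisely because $A(E_6)/Z$ is not virtually cyclic; then a standard commutator/ping-pong trick — replacing $w$ by a high power $w^N$ if necessary — yields that $\langle w^N, g^{-1}w^N g\rangle$ is free of rank $2$, and since $w^N$ is itself a product of conjugates of $w$ (in fact a power), it too lies in $\ker\phi$. This gives the desired $F_2 \le \ker\phi$.

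Alternatively, and perhaps more robustly, I would use acylindrical hyperbolicity: Calvez–Wiest proved that $A(E_6)/Z$ (indeed any irreducible spherical Artin group of rank $\ge 2$ modulo center) acts acylindrically on a hyperbolic space (the "additional length graph" or the graph of absorbable elements), and in such a group every element of infinite order that acts loxodromically is contained in a unique maximal virtually cyclic subgroup (its elementary closure), and any infinite-order element not in that closure together with a suitable conjugate generates a free group. I would verify $w$ acts loxodromically (or pass to a power that does — any infinite-order element in an acylindrically hyperbolic group has a loxodromic power, after possibly quotienting by the finite radical, which here is trivial), then choose $g \in A(E_6)$ outside the elementary closure $E(w)$ of $w$; the standard Gromov–Arzhantseva–type ping-pong for loxodromics then produces $\langle w^N, g^{-1} w^N g \rangle \cong F_2$ for $N \gg 0$.

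The main obstacle I anticipate is (ii): pinning down enough structure of $A(E_6)/Z$ — specifically, confirming that $w$ (or a power) behaves loxodromically and that its elementary closure is genuinely small, so that a conjugator $g$ with the required properties exists. The word problem in $A(E_6)$ is solvable via Garside theory, so verifying "$w \ne 1$" and "$w$ has infinite order" is mechanical; the delicate point is the dynamical/geometric input showing $w$ is not peripheral in some degenerate sense. I would handle this by appealing directly to the acylindrical hyperbolicity machinery for spherical Artin groups rather than attempting an ad hoc ping-pong on a concrete space, since that machinery packages exactly the statement "infinite-order elements, up to powers and elementary closures, generate free subgroups with their conjugates." Finally, Theorem \ref{ker} follows as a corollary: by the Looijenga–Mondello description, $\pi_1^{orb}(\mathcal{H}^{nh}(4))$ and $\pi_1^{orb}(\mathcal{H}(3,1))$ surject (up to the central $\ZZ$ and passage to inner automorphisms) onto central quotients of Artin groups whose defining graphs contain $E_6$, and the monodromy maps $\rho_{\mathcal{C}}$ are the corresponding geometric homomorphisms; pulling back the free group $F_2 \le \ker\phi$ through this surjection (the central kernel being abelian, hence not affecting freeness of the preimage generated by lifts of $w$ and $g^{-1}wg$) gives $F_2 \le \ker\rho_{\mathcal{C}}$.
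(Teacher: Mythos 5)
Your overall architecture matches the paper's at the top level: $w$ and all of its conjugates lie in the kernel, so everything reduces to producing one $g$ with $\langle w, g^{-1}wg\rangle\cong F_2$, and the paper likewise obtains this $F_2$ as a subgroup of a rank-$2$ free group of the form $\langle w,\kappa^n\rangle$. But your key dynamical input is wrong. In Route B you propose to ``verify $w$ acts loxodromically (or pass to a power that does)'' on the additional length graph, invoking the principle that infinite-order elements of acylindrically hyperbolic groups have loxodromic powers. No such principle exists: a Dehn twist acting on the curve complex is infinite-order, elliptic, and has all powers elliptic (and is not even generalized loxodromic). Worse, the opposite of your claim is true here: Lemma \ref{l1} of the paper shows that $w$ is an \emph{elliptic} isometry of $\mathcal{C}_{AL}(E_6)$, because $\langle a_1,b\rangle$ preserves the simplex $\{\langle a_2\rangle,\langle a_5\rangle\}$ of $\mathcal{P}(E_6)$ and such elements act elliptically by Theorem \ref{ell}. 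Since bounded orbits for $w$ force bounded orbits for every $w^N$, no power rescues the loxodromic hypothesis, and the ``elementary closure of $w$'' you want to ping-pong around does not exist. Route A has the same gap in softer clothing: ``$g$ does not normalize $\langle w\rangle Z/Z$, hence $\langle w^N,g^{-1}w^Ng\rangle$ is free for $N\gg 0$'' is not a theorem in a general (even linear, torsion-free) group -- the Heisenberg group already defeats it -- and the missing hypothesis is exactly the loxodromic/WPD behaviour that $w$ does not have. Your acknowledged ``delicate point'' is therefore not a technicality to be checked but the place where the argument, as designed, fails.

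What the paper does instead is pair the \emph{elliptic} element $w$ with a separate loxodromic WPD element, the Calvez--Wiest element $\kappa$ of (\ref{kappa}), and apply the Abbott--Dahmani criterion for an elliptic/loxodromic pair: one checks that no power of $w$ preserves the quasi-axis $\operatorname{A}_{10\delta}(\kappa)$ (Lemma \ref{l2}, via the Antolin--Cumplido estimate (\ref{tech})) and that $\operatorname{Fix}_{50\delta}(w)$ is bounded, which follows from $w$ having infinite order in $A(E_6)_\Delta$ (Lemma \ref{l3}, proved with the degree homomorphism) together with acylindricity. This gives $\langle w,\kappa^n\rangle\cong F_2$ for some $n$, and then $\langle w,\kappa^{-n}w\kappa^n\rangle$ is a rank-$2$ free subgroup of it that lies in $\ker\varphi_\Omega$. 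So the conjugator $g=\kappa^n$ is not generic: it is a power of a specific loxodromic, and the freeness comes from the elliptic-versus-loxodromic interaction rather than from any hyperbolic behaviour of $w$ itself. If you only wanted the weaker assertion that the kernel contains some copy of $F_2$, a softer route through linearity of $A(E_6)$ and the Tits alternative applied to the infinite normal subgroup $\ker\varphi_\Omega\cap A(E_6)$ is conceivable, but you would still need to exclude virtual solvability, and it would not produce the explicit generators $w$ and $g^{-1}wg$ required by the statement.
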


Theorem \ref{thmb} follows from the acylindrical hyperbolicity of spherical-type Artin groups modulo their center. Here, the Ping-Pong strategy can be adopted to detect non-abelian free groups.

\textbf{Acylindrical hyperbolicity.} Let $A(\Gamma)_\Delta$ denote the spherical-type Artin group $A(\Gamma)$ quotient by its center. Calvez-Wiest proved that the group $A(\Gamma)_\Delta$ acts \textit{acylindrically} on a $\delta$-hyperbolic graph, which is known in the literature as the \textit{additional length graph} $\textit{C}_{AL}(\Gamma)$ \cite[Theorem 1.3]{CalvezWiestAcyArt2017}. 

Calvez-Wiest found a group element $\kappa\in A(\Gamma)$ representing a loxodromic isometry of $\textit{C}_{AL}(\Gamma)$ that acts weakly properly discontinuously. By Osin's criterion \cite[Theorem 1.2]{Osin2016} the existence of the \textit{Calvez-Wiest} element $\kappa$ is enough to conclude the acylindrical hyperbolicity of $A(\Gamma)_\Delta$.

We prove that the infinite order Wajnryb element acts elliptically on $\textit{C}_{AL}(\Gamma)$ and a classical result shows that $w\in A(\Gamma)_\Delta$ cannot fix $\kappa\in A(\Gamma)_\Delta$ in the Gromov boundary of the additional length graph \cite[Lemma 25]{AntolinCumplidoParabolic21}. The following is due to Abbott-Dahmani and is the key ingredient we need to prove Theorem \ref{thmb}.

\begin{proposition*}[{\cite[Proposition 2.1]{AbbottDahmaniPnaive2019}}]
\label{quasi}
Let $G$ be a group acting acylindrically hyperbolic on a geodesic $\delta$-hyperbolic space $X$. Suppose $\sigma\in G$ is elliptic and $\gamma\in G$ is loxodromic. If 
\begin{enumerate}
    \item the set $\operatorname{A}_{10\delta}(\gamma)=\{x\in X\mid d(x,\gamma x)\leq \inf_{y\in X}d(y,\gamma y)+10\delta\}$ is not preserved by any non-trivial power of $\sigma$ and
    \item the diameter of $\operatorname{Fix}_{50\delta}(\sigma)=\{x\in X\mid d(x,\sigma^nx)\leq 50\delta\mbox{ for all } n\in\ZZ\}$ is finite,
\end{enumerate}
then there is some $n\in\ZZ$ such that the group generated by $\sigma$ and $\gamma^n$ is a non-abelian free group of rank $2$.
\end{proposition*}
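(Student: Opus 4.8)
The plan is to prove that, for a suitable $n$, the natural surjection $\langle\sigma\rangle*\langle\gamma^n\rangle\twoheadrightarrow\langle\sigma,\gamma^n\rangle$ is an isomorphism; this suffices because $\gamma^n$ is loxodromic and $\sigma$ has infinite order (which it must, as $F_2$ is torsion-free), so the left-hand side is $\ZZ*\ZZ\cong F_2$. Equivalently, the goal is to show that every nontrivial reduced word in $\sigma$ and $\gamma^n$ acts nontrivially on $X$. For the set-up: since $\sigma$ is elliptic the subgroup $\langle\sigma\rangle$ has bounded orbits, so $\delta$-hyperbolicity of $X$ produces a quasi-centre $o\in X$ with $d(o,\sigma^k o)\le 4\delta$ for all $k$ (so in particular $o\in\operatorname{Fix}_{50\delta}(\sigma)$); this will be the basepoint. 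Since $\gamma$ is loxodromic it has a $\langle\gamma\rangle$-invariant quasi-axis $\ell$ with a coarsely well-defined nearest-point projection $\pi_\ell\colon X\to\ell$, and the same line is a quasi-axis for each $\gamma^n$, with translation length $nL$ for some $L>0$.

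The heart of the matter is a ``general position of axes'' estimate: there is a constant $D_0$, depending on $\sigma,\gamma,\delta$ but not on $n$, such that $\operatorname{diam}\big(\pi_\ell(\sigma^k\ell)\big)\le D_0$ for every $k\neq 0$. This is where hypotheses (1) and (2) are used. If the shadow $\pi_\ell(\sigma^k\ell)$ were very large, the uniform quasi-lines $\ell$ and $\sigma^k\ell$ would fellow-travel along a long segment, forcing $\sigma^k$ to coarsely stabilise $\ell$; condition (2) then prevents this coarse stabilisation from being degenerate and lets one upgrade it to $\sigma^k$ stabilising the sublevel set $\operatorname{A}_{10\delta}(\gamma)$ itself, contradicting (1). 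Uniformity of $D_0$ in $k$ follows from a finiteness argument: an unbounded family of large shadows would yield, after appropriate $\gamma$-translation, arbitrarily many pairwise-distinct elements $\gamma^m\sigma^k$ each displacing a long segment of $\ell$ by a bounded amount, which an acylindrical action forbids (distinctness of the $\gamma^m\sigma^k$ holds because no nontrivial power of the elliptic $\sigma$ is a power of the loxodromic $\gamma$).

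Granting the constant $D_0$, fix $n$ with $nL$ much larger than $D_0+\delta$, and let $W$ be a nontrivial reduced word; a cyclic permutation (which preserves triviality) reduces us to $W=\gamma^{nb_1}\sigma^{a_1}\cdots\gamma^{nb_m}\sigma^{a_m}$ with all $a_i,b_i\neq 0$, words that are pure powers of $\sigma$ or of $\gamma^n$ being visibly nontrivial. A direct rearrangement gives
\[
W=W'\sigma^{A},\qquad W'=\prod_{i=1}^{m}g_{i-1}\,\gamma^{nb_i}\,g_{i-1}^{-1},\qquad g_j:=\sigma^{a_1+\cdots+a_j},\quad A:=a_1+\cdots+a_m,
\]
so $W'$ is a product of loxodromic elements with axes $\ell_{i-1}=g_{i-1}\ell$; applying the core estimate to $g_{i-1}^{-1}\ell_i=\sigma^{a_i}\ell$ shows that consecutive axes have bounded mutual projection ($\le D_0$), while the $i$-th factor translates its axis by at least $nL$. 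Because the translation lengths dominate $D_0$, one shows inductively (the standard local-to-global estimate in $\delta$-hyperbolic spaces, i.e.\ a ping-pong along the chain of axes) that $d(o,W'o)\ge m\,(nL-O(D_0+\delta))>0$; since $d(o,\sigma^{A}o)\le 4\delta$ this gives $d(o,Wo)>0$, hence $W\neq 1$ in $G$, and therefore $\langle\sigma,\gamma^n\rangle\cong\langle\sigma\rangle*\langle\gamma^n\rangle\cong F_2$. The step I expect to be the genuine obstacle is the core estimate — extracting the uniform, $n$-independent bound $D_0$ from the merely qualitative hypotheses (1) and (2), in particular handling the case where $\sigma^k$ reverses the orientation of $\ell$ and making the finiteness argument effective; everything afterwards is routine hyperbolic geometry together with the choice of a large $n$.
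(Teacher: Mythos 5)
First, a point of comparison: the paper offers no proof of this proposition --- it is quoted verbatim from Abbott--Dahmani --- so there is no internal argument to measure yours against, and what follows assesses your sketch on its own terms. Your overall architecture is the standard and correct skeleton for results of this type: reduce to showing every reduced word acts nontrivially, base the estimate at a quasi-centre of the $\langle\sigma\rangle$-orbit, rewrite $W$ as $W'\sigma^A$ with $W'$ a product of conjugates $g_{i-1}\gamma^{nb_i}g_{i-1}^{-1}$ whose axes are $g_{i-1}\ell$, and run a local-to-global displacement estimate along the chain of axes once consecutive projections are uniformly bounded and $n$ is large. The algebraic rearrangement checks out, and granting the core estimate the rest is indeed routine.

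The genuine gap is exactly where you suspect it, and the heuristic you offer does not close it. You derive the core estimate $\operatorname{diam}(\pi_\ell(\sigma^k\ell))\le D_0$ by: large projection $\Rightarrow$ long fellow-travelling of $\ell$ and $\sigma^k\ell$ $\Rightarrow$ ``$\sigma^k$ coarsely stabilises $\ell$'' $\Rightarrow$ (via (2)) ``$\sigma^k$ preserves $\operatorname{A}_{10\delta}(\gamma)$'', contradicting (1). The last implication is the entire content of the proposition and is not a formal consequence of fellow-travelling: $\operatorname{A}_{10\delta}(\gamma)$ is an exact sublevel set of the displacement function of $\gamma$, and exact invariance of that set is far stronger than coarse stabilisation of a quasi-axis --- a priori it is only guaranteed for elements that control the displacement function on the nose (e.g.\ elements commuting with $\gamma$), not for elements merely translating the axis to a nearby quasi-line. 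Converting ``long overlap'' into a violation of (1) is precisely where the quantitative choices $10\delta$ and $50\delta$, the acylindricity constants, and the finite diameter of $\operatorname{Fix}_{50\delta}(\sigma)$ must all interact; in your sketch hypothesis (2) plays no identifiable role (``prevents the coarse stabilisation from being degenerate'' is not an argument), and the uniformity-in-$k$ step is likewise only gestured at. There is also a smaller circularity: you justify that $\sigma$ has infinite order ``as $F_2$ is torsion-free'', but neither hypothesis forces this (a finite-order elliptic can satisfy both (1) and (2)), so your argument as written would deliver $\langle\sigma\rangle*\langle\gamma^n\rangle$ rather than $F_2$; the upgrade needs the infinite order of $\sigma$ as an input, which in this paper's application is supplied separately by the verification that the Wajnryb element is not torsion.
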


We conclude that there exists a positive integer $n$ such that the group generated by $\kappa^{-n}w\kappa^n$ and $w$ is a non-abelian free group of rank $2$. 

\textbf{Projective strata.} We now explain how Artin groups arise in the context of the non-hyperelliptic components of the strata mentioned in Theorem \ref{ker}. 

The multiplicative group $\CC^*$ acts on the cotangent bundle of each Riemann surface $X$ by multiplication. The action preserves the multiplicity at the cone points of each holomorphic $1$-form and is well-defined of each connected component $\mathcal{C}$ of a stratum $\mathcal{H}(k_1,\dots,k_n)$. The resulting quotient is denoted by $\mathbb{P}\mathcal{C}$ and is known as a \textit{projective stratum} of translation surfaces. 

Looijenga-Mondello proved that the orbifold fundamental groups of $\mathbb{P}\mathcal{H}^{nh}(4)$ and $\mathbb{P}\mathcal{H}(3,1)$ are the inner automorphism groups of the $E_6$-type and $E_7$-type spherical Artin groups, respectively \cite{Looijenga2014}. A result of Pinkham implies that the monodromy map of $\mathbb{P}\mathcal{H}^{nh}(4)$ is geometric \cite{Pinkham}, meaning that standard Artin generators representing classes of elements in $\operatorname{Inn}(A(E_6))$ are mapped to some Dehn twists. We prove that the same holds for the monodromy of $\mathbb{P}\mathcal{H}(3,1)$. 

\begin{alphateo}
\label{main}
The topological monodromy $\rho_{\mathbb{P}\mathcal{H}(3,1)}:\pi_1^{orb}(\mathbb{P}\mathcal{H}(3,1))\rightarrow\operatorname{Mod}_{3,2}$ maps the classes of the standard generators to Dehn twists.
\end{alphateo}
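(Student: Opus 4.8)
The plan is to exhibit $\rho_{\mathbb{P}\mathcal{H}(3,1)}$ as the geometric (Picard--Lefschetz) monodromy of the smoothing family of an $E_7$-singularity, in strict parallel with Pinkham's treatment of $\mathbb{P}\mathcal{H}^{nh}(4)$ via the $E_6$-singularity, and then to read off the conclusion from the classical description of such monodromies. I would begin from the Looijenga--Mondello picture: $\mathbb{P}\mathcal{H}(3,1)$ is orbifold-isomorphic to the complement $\mathbb{P}B\setminus\mathbb{P}\Delta$ of the projectivised discriminant in the base $\mathbb{P}B$ of the $\CC^*$-equivariant miniversal deformation of the quasi-homogeneous $E_7$-singularity $x^3+xy^3$, the isomorphism realising $\pi_1^{orb}(\mathbb{P}\mathcal{H}(3,1))\cong\operatorname{Inn}(A(E_7))$ so that the classes of the standard Artin generators $a_1,\dots,a_7$ are carried, up to conjugacy, to meridians about the smooth locus of $\mathbb{P}\Delta$. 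Over $\mathbb{P}B\setminus\mathbb{P}\Delta$ sits the tautological bundle whose fibre is the topological surface underlying $(X,\omega)$ with its two zeros (of orders $3$ and $1$) marked and whose monodromy into $\operatorname{Mod}_{3,2}$ is precisely $\rho_{\mathbb{P}\mathcal{H}(3,1)}$.

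The geometric core is to recognise this tautological bundle as the compactified smoothing family $\{F_t=0\}_{t\notin\Delta}$ of the $E_7$-singularity, with the two marked points coming from its two analytic branches. Since $\mu=7$ and the singularity is $2$-branched, an Euler-characteristic count shows that the Milnor fibre has genus $3$ with two ends, so the compactified generic curve is $\Sigma_{3,2}$ once the two ends are marked --- this fixes the target $\operatorname{Mod}_{3,2}$ --- and the natural Gelfand--Leray $1$-form extends to zeros of orders $3$ and $1$ at the two marked points, recovering the stratum type $\mathcal{H}(3,1)$. The work lies in matching this with the period (flat-geometric) presentation of surfaces in $\mathcal{H}(3,1)$: as Pinkham does for $E_6$, one must show that a translation surface in $\mathcal{H}(3,1)$ deforms $x^3+xy^3$ compatibly with its weighted $\CC^*$-action and that the induced family over $\mathbb{P}\mathcal{H}(3,1)$ is the asserted one. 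Along the locus of $\mathbb{P}\Delta$ where the order-$1$ zero collides with the order-$3$ zero --- so that an $\mathcal{H}^{nh}(4)$-surface appears in the boundary --- I would restrict to the $E_6$-subfamily corresponding to deleting the appropriate node of the $E_7$-diagram, via the induced monomorphism $A(E_6)\hookrightarrow A(E_7)$, reducing that case to Pinkham's result; elsewhere on $\mathbb{P}\Delta$, where a saddle connection of the generic flat surface is pinched, I would read off from the half-plane gluing that the pinched loop is an embedded essential simple closed curve on $\Sigma_{3,2}$ and that the total space of the degeneration is smooth there.

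Granting the identification, the classical theory of Milnor fibrations of isolated plane-curve singularities takes over: the discriminant of the miniversal deformation is irreducible, its complement has fundamental group $A(E_7)$ generated by the (mutually conjugate) meridians, and each meridian pinches a vanishing cycle on the generic fibre and acts on it by the Picard--Lefschetz transformation, which for a curve is the right Dehn twist about that vanishing cycle. Since the bundle monodromy factors through $\operatorname{Inn}(A(E_7))=\pi_1^{orb}(\mathbb{P}\mathcal{H}(3,1))$ and the standard generators are conjugate to meridians, $\rho_{\mathbb{P}\mathcal{H}(3,1)}$ must carry each $[a_i]$ to a Dehn twist $T_{\gamma_i}$; the $E_7$-pattern of geometric intersection numbers among the seven curves $\gamma_i$ simultaneously encodes the braid relations, so the homomorphism is geometric in the required sense.

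The step I expect to be the main obstacle is the identification in the second paragraph: matching the tautological translation-surface bundle over $\mathbb{P}\mathcal{H}(3,1)$ with the $E_7$-smoothing family \emph{on the nose}, compatibly with the Looijenga--Mondello isomorphism on fundamental groups, so that the standard generators correspond to the correct meridians. The delicate sub-points are to verify that the Gelfand--Leray form acquires exactly the zero orders $(3,1)$ at the two ends, that the $\CC^*$-action appearing in the Looijenga--Mondello description is the one tracked by period coordinates on the stratum, and --- for the generic, non-collision behaviour of $\mathbb{P}\Delta$ --- that the pinched loop really is a simple closed curve rather than, say, a multicurve or a loop realised by pushing a marked point around. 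Everything downstream is a transcription of the $E_6$-case or an appeal to the standard Picard--Lefschetz package.
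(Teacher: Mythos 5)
Your overall strategy coincides with the paper's: both identify $\mathbb{P}\mathcal{H}(3,1)$ with (a quotient of) the discriminant complement of the $E_7$ versal deformation --- concretely, the space of smooth plane quartics with a marked flex in Shioda's normal form $Q_s$ --- and both import the fact that the monodromy $\rho:\pi_1(\CC^7\setminus\Pi)\cong A(E_7)\rightarrow\operatorname{Mod}_3^2$ of the associated Milnor/smoothing family sends standard generators (meridians) to Dehn twists, via the Picard--Lefschetz package as packaged by Calderon--Cuadrado--Salter. The difference is entirely in how one transfers this statement from $\pi_1(\CC^7\setminus\Pi)$ to $\pi_1^{orb}(\mathbb{P}\mathcal{H}(3,1))\cong A(E_7)_\Delta$. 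You propose to match the tautological translation-surface bundle with the smoothing family ``on the nose,'' compatibly with the Looijenga--Mondello isomorphism, and you correctly flag this as the main obstacle --- but you give no argument for it, and this is precisely the crux of the theorem, so as written the proposal has a gap at its load-bearing step. The paper avoids the on-the-nose identification altogether: it factors the comparison through the flex moduli space ($\mathbb{P}\mathcal{H}(3,1)\cong\mathcal{M}_3^{\operatorname{flex}}$, Proposition \ref{lo}), shows that the quotient of $\CC^7\setminus\Pi$ by the weighted $\CC^*$-relation is a fibration with connected fibres, hence induces a \emph{surjection} $\theta:\pi_1(\CC^7\setminus\Pi)\twoheadrightarrow\pi_1^{orb}(\mathbb{P}\mathcal{H}(3,1))$ commuting with the two monodromies through the capping map $\operatorname{Cap}:\operatorname{Mod}_3^2\rightarrow\operatorname{Mod}_{3,2}$ (Lemma \ref{sper}, Proposition \ref{li}), and then upgrades the induced surjective endomorphism $\overline{\theta}$ of $A(E_7)_\Delta$ to an isomorphism using the Hopfian property (residual finiteness via linearity of $A(E_7)$, Lemma \ref{saveass}). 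This soft group-theoretic step is exactly what replaces the delicate bundle identification you were worried about; if you want to complete your version, you should either carry out that identification or adopt a surjectivity-plus-Hopfian argument of this kind.

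Two smaller points. First, your description of the boundary behaviour is off: the locus where the order-$1$ zero collides with the order-$3$ zero (a hyperflex, landing in $\mathcal{H}^{nh}(4)$) consists of \emph{smooth} quartics and is therefore contained in $\CC^7\setminus\Pi$, not in the discriminant $\mathbb{P}\Delta$; the discriminant parametrizes singular curves, where a vanishing cycle is pinched. So the case division you propose along $\mathbb{P}\Delta$ does not arise, and no reduction to Pinkham's $E_6$ result is needed there. Second, even after an isomorphism $\overline{\theta}$ is produced, note that the conclusion is that the images $\theta(a_i)$ of the standard meridian generators map to Dehn twists --- one does not need (and the paper does not claim) that $\overline{\theta}$ is the identity on $A(E_7)_\Delta$, only that these images form a standard generating set; your insistence on matching generators to ``the correct meridians'' is therefore stronger than what the theorem requires.
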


Theorem \ref{main} and Pinkham's result are then enough to conclude that the kernels of the monodromies associated with the strata $\mathcal{H}^{nh}(4)$ and $\mathcal{H}(3,1)$ both contain a copy of a non-abelian free group $F_2$ of rank $2$.

\textbf{Structure of the paper} The paper is organized into five sections. In Section $1$ we describe the additional length graph associated with the Garside structure of a spherical-type Artin group $A(\Gamma)$, which serves as $\delta$-hyperbolic metric space for the acylindrical action of $A(\Gamma)$. In Section $2$ we define geometric homomorphisms and describe the Wajnryb element, while in Section $3$ we prove Theorem \ref{thmb}. In Sections $4$ and $5$ we draw the consequences that the existence of a non-abelian free group of rank $2$ implies for the topological monodromy of strata of abelian differentials. In particular, in Section $5$ we prove Theorem \ref{ker} and \ref{main}.

\textbf{Acknowledgments.} I am grateful for the patient guidance and support provided by my supervisors Tara Brendle and Vaibhav Gadre throughout working on this paper. I would also like to thank Aaron Calderon for the many helpful conversations, as well as Matt Bainbridge and Bradley Zykoski for patiently answering my questions. I am grateful to Aaron Calderon and Nick Salter for pointing me out the relation between versal deformation spaces of plane curve singularities and the flat geometry in genus $3$, which is the content of their joint work with Pablo Portilla Cuadrado. I would also thank Dawei Chen and the anonymous referee for their comments on the initial draft of this work. Finally, I would like to express my gratitude for the comments and suggestions provided by Aitor Azemar, Philipp Bader, Jim Belk, Rachael Boyd, Tudur Lewis, Miguel Orbegozo Rodriguez, Francesco Pagliuca and Franco Rota. 

\section{Spherical-type Artin groups}

Artin groups are finitely presented groups where the generators and the relations are given by a finite graph, as in (\ref{presArt}). For example, a braid group $\mathcal{B}_n$ is an Artin group with defining graph $A_{n-1}$, as in Figure \ref{spherical}. 

The quotient of a braid group $\mathcal{B}_n$  by the subgroup normally generated by the squares of the standard generators is the symmetric group of size $n$ \cite[Section 9.3]{farb2011primer} . Similarly, any Artin group $A(\Gamma)$ comes with a Coxeter group $W(\Gamma)$ given by the additional relations $a_i^2=1$ for every standard generator $a_i$. An Artin group $A(\Gamma)$ is of \textit{spherical-type} if $W(\Gamma)$ is finite. 
\begin{figure}[h]
    \centering
    \includegraphics[scale=0.25]{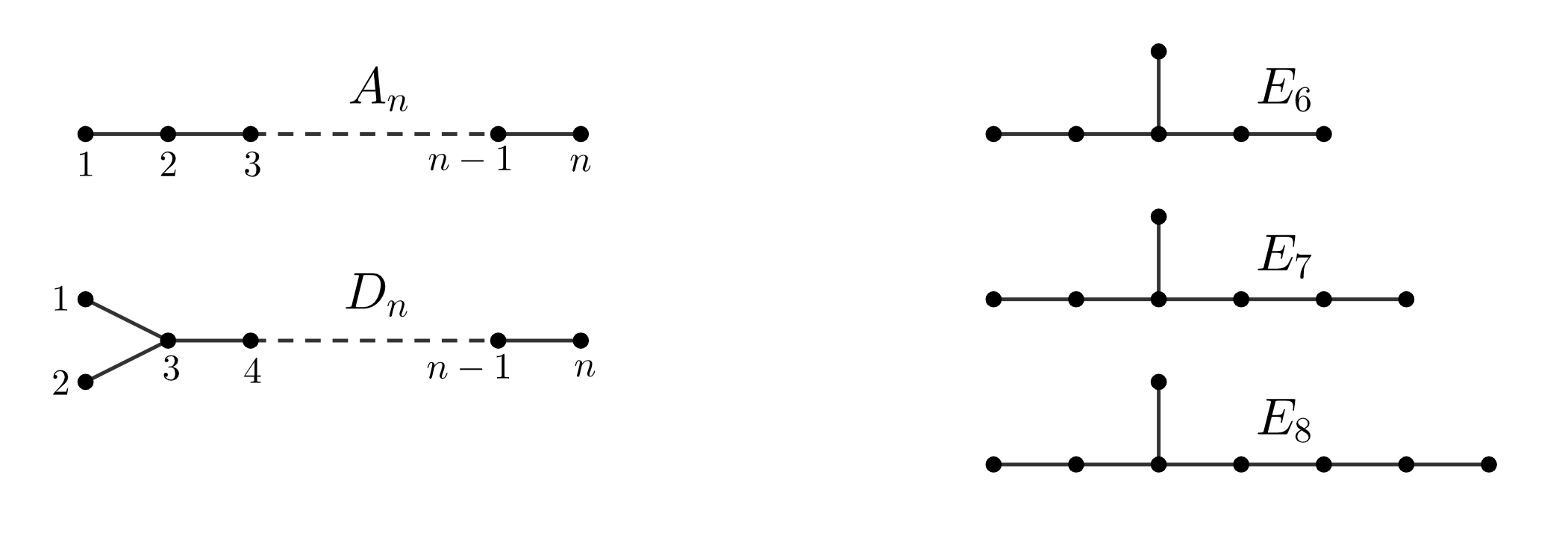}
    \caption{The spherical Artin groups. In particular, the $E_6$-type and $E_7$-type groups that describe the orbifold fundamental group of the projective components $\mathbb{P}\mathcal{H}(3,1)$ and $\mathbb{P}\mathcal{H}^{nh}(4)$ are spherical.}
    \label{spherical}
\end{figure}

Spherical-type Artin groups are well understood and, for example, admit a solution to the word problem. The algorithm is given by the Garside structure; see \cite{BrieskornArtin1972} for more details.

\textbf{Garside groups.} Let $G$ be a finitely generated group and $G^+$ the submonoid generated by the same finite generating set of $G$. Furthermore, suppose that $G^+$ trivially intersects $(G^+)^{-1}$. The prefix order on $G$ is the partial order $(G,\preceq)$ where  $a\preceq b$ if and only if $a^{-1}b\in G^+$.

If $\mathcal{A}$ is the set of elements in $G^+$ that cannot be written as a product of other non-trivial elements of $G^+$, the monoid $G^+$ is \textit{Noetherian} if for every $x\in G^+$ we have that $\sup\{n\in\NN\mid x=a_1\dots a_n, a_i\in\mathcal{A}\}$ is finite. Then, the group $G$ is \textit{Garside} if its monoid $G^+$ is Noetherian, the prefix order admits greater common divisors and lower common multiples and there exists  an element $\Delta\in G^+$ such that:
\begin{itemize}
    \item the conjugation action by $\Delta$ fixes the monoid $G^+$;
    \item the set $\{s\in G\mid 1\preceq s\preceq \Delta\}$ of simple elements is finite and generates $G$.
\end{itemize}

All the above assumptions guarantee that every $x\in G$ in a Garside group can be uniquely written in its \textit{(left) normal form} as $x=\Delta^ks_1\dots s_n,$ where each $s_i$ is a simple element that is not $\Delta$, and for every pair $\{s_i,s_{i+1}\}$ of adjacent simple elements the greater common divisor between $s_is_{i+1}$ and $\Delta$ is exactly $s_i$. The integer $k$ is denoted by $\inf(x)$, while $n$ is denoted by $\sup(x)$. 

Similarly, the suffix order on a Garside group $G$ provides the group with right normal forms, where elements in $G$ can be uniquely written in the form $t_1\dots t_n\Delta^k$ with $t_i$ simple elements different from $\Delta$; see \cite[Section 2]{AntolinCumplidoParabolic21} for more details. In what follows, we will only adopt the prefix order.

We say that $x\in G$ absorbs $y\in G$ if either $\sup(y)=0$ or $\inf(y)=0$ and both the equalities $\sup(xy)=\sup(x)$ and $\inf(xy)=\inf(x)$ hold. In this case, $y$ is absorbed by $x$ and we say that the group element $x$ is \textit{absorbable}. 

Spherical-type Artin groups admit a Garside structure through the submonoid $A(\Gamma)^+$ generated by the same standard generators of $A(\Gamma)$. Simple elements of $A(\Gamma)^+$ are words with free-square subwords. Absorbable elements are not classified but, for example, if $A(\Gamma)$ is the Braid group $\mathcal{B}_4$, any $n$-th powers of a generator absorb the $n$-th power of a non-adjacent generator. More precisely, we can write $$\sigma_1^n\sigma_3^n=(\sigma_1\sigma_3)^n,$$ and observe that $\sigma_1^n$ absorb $\sigma_3^n$.

The \textit{Garside element} $\Delta$ is the least common multiple of all the standard generators and any spherical type Artin group has infinite cyclic center generated by a power of $\Delta$ \cite[Théorème 7.1]{BrieskornArtin1972}. In what follows, we denote by $A(\Gamma)_\Delta$ the $\Gamma$-type spherical Artin group modulo its center. 

\textbf{Acylindrical hyperbolicity.} Let $\mathcal{S}$ be the set of simple and absorbable elements of a spherical-type Artin group $A(\Gamma)$. The vertices of the \textit{additional length graph} $\mathcal{C}_{AL}(\Gamma)$ are the left cosets of the subgroup $\langle\Delta\rangle$ in $A(\Gamma)$. Two cosets $g_1\langle\Delta\rangle$ and $g_2\langle\Delta\rangle$ are adjacent if they differ by the left multiplication of some element in $\mathcal{S}\setminus\{\Delta\}$.  As usual, the graph comes equipped with the metric where each edge has length one.

Calvez-Wiest proved that $\mathcal{C}_{AL}(\Gamma)$ is a $\delta$-hyperbolic geodesic metric space \cite[Theorem 1]{CalvezWiestCurve2017} and $A(\Gamma)_\Delta$ is an \textit{acylindrically hyperbolic} group for its action on $\mathcal{C}_{AL}(\Gamma)$  \cite[Theorem 1.3]{CalvezWiestAcyArt2017}. More precisely, they proved the following theorem.

\begin{theorem}
\label{acyartin}
If $A(\Gamma)$ is a spherical-type Artin group, then the action of $A(\Gamma)_{\Delta}$ on $\mathcal{C}_{AL}(\Gamma)$ is cobounded and non-elementary. Moreover, for every $\varepsilon>0$ there is a positive real numbers $R(\varepsilon)$ such that for each $x,y\in \mathcal{C}_{AL}(\Gamma)$ with $d(x,y)>R(\varepsilon)$, the set 
$$\Gamma_{\varepsilon}(x,y)=\{g\in A(\Gamma)_\Delta\mid d(x,gx)<\varepsilon, d(y,gy)<\varepsilon\}$$
is finite. 
\end{theorem}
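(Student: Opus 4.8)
The plan is to establish the three assertions --- coboundedness, non-elementarity, and the finiteness estimate --- in turn, taking as input Calvez-Wiest's structural analysis of $\mathcal{C}_{AL}(\Gamma)$, notably its $\delta$-hyperbolicity and the coarse comparison between $\mathcal{C}_{AL}$-distance and Garside canonical length \cite{CalvezWiestCurve2017}. Coboundedness is immediate: $A(\Gamma)$ acts by left multiplication on the vertex set of $\mathcal{C}_{AL}(\Gamma)$, i.e.\ on the cosets of $\langle\Delta\rangle$, and this action is transitive; since $\Gamma$ is connected the centre of $A(\Gamma)$ is generated by a power of $\Delta$, so it lies in $\langle\Delta\rangle$ and acts trivially on vertices, whence the induced $A(\Gamma)_\Delta$-action is still vertex-transitive, in particular cobounded. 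For non-elementarity, the Calvez-Wiest element $\kappa$ is loxodromic, so the action has unbounded orbits; since $\kappa$ acts weakly properly discontinuously, the standard classification of cobounded isometric actions on hyperbolic spaces leaves only the lineal and the general (non-elementary) cases, and lineality would force $A(\Gamma)_\Delta$ to be virtually cyclic. This is impossible: a connected $\Gamma$ with at least two vertices contains an edge, so $A(\Gamma)$ contains $\mathcal{B}_3 = A(A_2)$ by \cite{lek}, and the image of $\mathcal{B}_3$ in $A(\Gamma)_\Delta$ surjects onto the virtually free non-abelian group $\mathcal{B}_3/Z(\mathcal{B}_3)$. Hence the action is non-elementary.

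For the finiteness estimate, fix $\varepsilon>0$ and, using vertex-transitivity, assume $x=\langle\Delta\rangle$; write $y=h\langle\Delta\rangle$ and suppose $d(x,y)$ exceeds a threshold $R(\varepsilon)$ still to be chosen. Unwinding the definition of the edges of $\mathcal{C}_{AL}(\Gamma)$, the hypothesis $d(x,gx)<\varepsilon$ says that, up to a power of $\Delta$, the element $g$ is a product of boundedly many (in terms of $\varepsilon$) elements of $\mathcal{S}\setminus\{\Delta\}$, which by the Calvez-Wiest distance estimates bounds the \emph{additional length} of $g$; likewise $d(y,gy)<\varepsilon$ bounds that of $h^{-1}gh$, whereas $d(x,y)$ large forces the additional length of $h$ to be large. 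As $\mathcal{C}_{AL}(\Gamma)$ is $\delta$-hyperbolic, any $g$ moving both endpoints of a geodesic $[x,y]$ by at most $\varepsilon$ coarsely preserves that geodesic: fixing a factorisation $h=s_1\cdots s_L\Delta^m$ with $s_i\in\mathcal{S}$ that realises, up to a multiplicative constant, a geodesic from $x$ to $y$, the element $g$ carries each partial coset $s_1\cdots s_j\langle\Delta\rangle$ to within $O(\varepsilon+\delta)$ of $s_1\cdots s_j\langle\Delta\rangle$.

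The crux is then to bound the number of such $g$ once $L$ is large. When the additional length of $h$ is large, its geodesic factorisation contains a long \emph{rigid} (periodic-free, non-absorbable) core $\rho$, which --- being the part of $h$ that actually contributes distance in $\mathcal{C}_{AL}(\Gamma)$ --- is necessarily of pseudo-Anosov type; an element $g$ coarsely stabilising the geodesic must, by the rigidity and near-uniqueness of Garside normal forms, send some nonzero power of $\rho$, under conjugation, to within bounded $\mathcal{C}_{AL}$-distance of $\rho$, and this is upgraded to the statement that $g$ commutes with a fixed nonzero power of $\rho$ in $A(\Gamma)_\Delta$. Since the centraliser of such a $\rho$ is virtually cyclic while the powers of $\rho$ have additional length tending to infinity, only finitely many elements of that centraliser have additional length below the bound forced by $d(x,gx)<\varepsilon$; hence $\Gamma_\varepsilon(x,y)$ is finite once $d(x,y)>R(\varepsilon)$, and a quantitative version bounds $|\Gamma_\varepsilon(x,y)|$ in terms of $\varepsilon$ alone, giving genuine acylindricity. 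The main obstacle is precisely this Garside-theoretic core: the lower bound for $d_{\mathcal{C}_{AL}}$ in terms of additional length is delicate because absorbable elements can create shortcuts in the graph, and controlling centralisers of rigid elements needs the summit-set and transport machinery of Garside theory; both are the technical heart of \cite{CalvezWiestAcyArt2017}, which I would invoke rather than reprove.
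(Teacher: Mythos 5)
First, a framing point: the paper does not prove this statement at all --- it is quoted from Calvez--Wiest (\cite{CalvezWiestCurve2017} for the hyperbolicity of $\mathcal{C}_{AL}(\Gamma)$ and \cite{CalvezWiestAcyArt2017} for the rest), so there is no internal proof to compare yours against. Judged on its own terms, your treatment of the first two assertions is correct and standard: the left action on cosets of $\langle\Delta\rangle$ is vertex-transitive and the centre acts trivially, giving coboundedness; and the loxodromic WPD element $\kappa$ together with the fact that $A(\Gamma)_\Delta$ is not virtually cyclic (it contains the image of an embedded $\mathcal{B}_3$, whose commutator subgroup is free of rank $2$ and meets the centre trivially) rules out the elementary cases. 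One small repair: a cobounded action with a loxodromic can also be quasi-parabolic (focal), not only lineal or general, so you need the WPD property of $\kappa$ (or Osin's trichotomy once acylindricity is in hand) to exclude that case as well.

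The genuine gap is in the third assertion, which is the actual content of acylindricity. The chain ``$d(x,y)$ large $\Rightarrow$ long rigid core $\rho$ of $h$ $\Rightarrow$ $\rho$ is of pseudo-Anosov type $\Rightarrow$ any $g\in\Gamma_\varepsilon(x,y)$ commutes with a fixed power of $\rho$ $\Rightarrow$ virtually cyclic centraliser'' is asserted rather than proved, and several links fail as stated: large canonical length of $h$ does not produce a long non-absorbable core (absorbable factors are precisely what can make $h$ short in $\mathcal{C}_{AL}(\Gamma)$, and the hypothesis is on graph distance, not on normal forms); coarse preservation of a quasi-geodesic does not upgrade to exact commutation with a power of any subword; and a long rigid element need not have virtually cyclic centraliser in $A(\Gamma)_\Delta$ --- it may lie in or normalise a proper parabolic subgroup, in which case Theorem~\ref{ell} makes it elliptic with large normaliser. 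Finally, deferring ``the technical heart'' to \cite{CalvezWiestAcyArt2017} does not close the argument: the Garside-theoretic core of that paper is the proof that the single element $\kappa$ is loxodromic and WPD, which via Osin's criterion \cite{Osin2016} yields acylindrical hyperbolicity of the group --- a strictly weaker statement than the uniform finiteness of $\Gamma_\varepsilon(x,y)$ for the action on $\mathcal{C}_{AL}(\Gamma)$ itself. So you should either cite a precise statement asserting acylindricity of this particular action (which is all the paper does) and drop the sketch, or accept that the third assertion still requires a genuine proof that your outline does not supply.
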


It follows from a more general theorem \cite[Theorem 1.1]{Osin2016} that every $g\in A(\Gamma)_\Delta$ is either a \textit{loxodromic} or an \textit{elliptic} isometry of $\mathcal{C}_{AL}(\Gamma)$. In other words, either
\begin{itemize}
    \item the map $n\mapsto g^n\cdot x$ is a quasi-isometry between $\ZZ$ and the orbit of some (equivalently any) point $x\in\mathcal{C}_{AL}(\Gamma)$, and then $g$ is \textit{loxodromic}, or
    \item the action by $g$ has bounded orbits, and $g$ is then\textit{ elliptic}.
\end{itemize}

Any loxodromic element $g\in A(\Gamma)_\Delta$ is always \textit{weakly properly discontinuous}: for every $\varepsilon>0$ and $x\in \mathcal{C}_{AL}(\Gamma)$ there exists some $n\in\ZZ$ such that the set
$$\{g\in G\mid d(x,gx)<\varepsilon, d(\kappa^nx,g\kappa^nx)<\varepsilon\}$$ is finite. It turns out that the existence of a loxodromic and weakly properly discontinuous group element is also a sufficient property to show that a non virtually-cyclic group is acylindrically hyperbolic \cite[Theorem 1.2]{Osin2016}. Calvez-Wiest proved Theorem \ref{acyartin} by showing that 
\begin{equation}
\label{kappa}
   \kappa=a_4a_1a_3a_2a_4a_5a_4a_1a_3a_2a_6a_5a_5a_6a_2a_3a_1a_4a_5a_4a_2a_3a_1a_4 
\end{equation}

projects to a loxodromic and weakly properly discontinuous isometry in $A(E_6)_\Delta$.

However, there is no known sufficient and necessary criterion to determine if a given isometry of an acylindrically hyperbolic group is loxodromic or elliptic. Nevertheless, Antolin-Cumplido gave a sufficient condition for an isometry of the additional length graph to have bounded orbits \cite[Theorem 2]{AntolinCumplidoParabolic21}.  We will describe this criterion below. 

A \textit{parabolic subgroup} $P$ of an Artin group $A(\Gamma)$ is the conjugate of a subgroup generated by some strict subset of the standard generators. If $P$ is not a direct product of non-trivial parabolic subgroups, we say that it is \textit{irreducible}. The complex of irreducible parabolic subgroups $\mathcal{P}(\Gamma)$ is defined to have irreducible parabolic subgroups as vertices. A set of vertices $\{P_1,\dots,P_n\}$ is an $n$-simplex if one of the following properties is satisfied for all $i\neq j$:
\begin{itemize}
    \item $P_i\subset P_j$ or $P_j\subset P_i$;
    \item $P_i\cap P_j=\{1\}$ and $[P_i,P_j]=1$.
\end{itemize}
The complex $\mathcal{P}(\Gamma)$ can detect elliptic isometries of $\mathcal{C}_{AL}(\Gamma)$.

\begin{theorem}
\label{ell}
Suppose $A(\Gamma)$ is an irreducible spherical-type Artin group with more than two standard generators. The elements preserving some simplex of $\mathcal{P}(\Gamma)$ act elliptically on $\mathcal{C}_{AL}(\Gamma)$.  In particular, the normalizers of parabolic subgroups act elliptically on $\mathcal{C}_{AL}(A)$.
\end{theorem}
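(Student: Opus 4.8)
The statement of Theorem \ref{ell} is, essentially, [AntolinCumplidoParabolic21, Theorem 2]; I would organise its proof by reducing everything to one geometric fact about proper parabolic subgroups, exploiting the ingredients already in place: the acylindrical action of Theorem \ref{acyartin}, the description of $\mathcal{C}_{AL}(\Gamma)$ via simple \emph{and} absorbable elements, and the standard dichotomy ``elliptic'' $=$ ``bounded orbit''.

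\textbf{Step 1: from simplices to a single normalizer.} Suppose $g\in A(\Gamma)_\Delta$ preserves a simplex $\{P_1,\dots,P_k\}$ of $\mathcal{P}(\Gamma)$. Then $g$ permutes the finite vertex set, so $g^{k!}$ fixes every $P_i$, and in particular $g^{k!}\in N(P_1)$. Since $\langle g\rangle x=\bigcup_{j=0}^{k!-1}\langle g^{k!}\rangle(g^jx)$ for any point $x$, the orbit of $x$ under $\langle g\rangle$ is a finite union of orbits under $\langle g^{k!}\rangle$; hence $g$ is elliptic as soon as $g^{k!}$ is. Because the vertices of $\mathcal{P}(\Gamma)$ are, by definition, conjugates of subgroups generated by \emph{strict} subsets of the standard generators, $P_1$ is a proper parabolic subgroup. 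So the whole theorem reduces to the ``in particular'' assertion: $N(P)$ acts elliptically whenever $P$ is a proper parabolic subgroup.

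\textbf{Step 2: the normalizer versus the parabolic.} Write $P=\alpha A_S\alpha^{-1}$ with $\alpha\in A(\Gamma)$ and $S\subsetneq\mathcal{V}(\Gamma)$. Since $A(\Gamma)$ acts on $\mathcal{C}_{AL}(\Gamma)$ by isometries and conjugation by $\alpha$ carries $N(A_S)$ to $N(P)$, we may assume $P=A_S$ is standard. I claim next that $A_S$ itself has a bounded orbit; granting this (Step 3), set $r=\inf_{x}\sup_{h\in A_S}d(x,hx)<\infty$ and $Z=\{x\in\mathcal{C}_{AL}(\Gamma)\mid\sup_{h\in A_S}d(x,hx)\le r+100\delta\}$. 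Then $Z$ is nonempty by definition of the infimum, and a routine quasi-centre estimate in a $\delta$-hyperbolic geodesic space — comparing displacements at the midpoint of a geodesic $[x,y]$ using thinness of the triangle $x,y,hx$ — shows $Z$ is bounded. Crucially $Z$ is defined purely from the subgroup $A_S$ and its action, so any isometry normalising $A_S$ maps $Z$ to $Z$; thus $N(A_S)\cdot Z=Z$, and every element of $N(A_S)$ has a bounded orbit, i.e.\ is elliptic. Combined with Step 1 this proves the theorem.

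\textbf{Step 3 (the crux): a proper parabolic has a bounded orbit.} This is the technical input I would import from [AntolinCumplidoParabolic21], building on [CalvezWiestCurve2017]. The idea is that the \emph{absorbable} elements were admitted as edges of $\mathcal{C}_{AL}(\Gamma)$ precisely so that elements supported on a proper parabolic are ``short'': if $a_i,a_j$ are non-adjacent standard generators then $a_i^m$ absorbs $a_j^m$ via $a_i^ma_j^m=(a_ia_j)^m$, so powers of a standard generator become single edges; and, using the Garside element $\Delta_S$ of $A_S$ together with the parabolic-subgroup theory (Cumplido–Gebhardt–González-Meneses–Wiest), one shows that every element of $A_S$ is a product of a uniformly bounded number of elements of $\mathcal{S}\cup\{\Delta\}$, so the orbit $A_S\cdot\langle\Delta\rangle$ has uniformly bounded diameter. (The hypothesis that $A(\Gamma)$ has more than two standard generators enters here: it guarantees both the non-adjacent generator needed to absorb and the regime in which $\mathcal{C}_{AL}(\Gamma)$ is the hyperbolic space of Theorem \ref{acyartin}.) The main obstacle is exactly this step: the Garside normal form of $h\in A_S$ inside the larger group $A(\Gamma)$ can be arbitrarily long, so one cannot bound the number of simple factors directly — one must show that the long ``tail'' of that normal form collapses to a single absorbable element, which is where absorbability genuinely does the work.
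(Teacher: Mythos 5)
First, a point of calibration: the paper does not actually prove Theorem \ref{ell} — it is imported verbatim as \cite[Theorem 2]{AntolinCumplidoParabolic21} — so there is no in-paper argument to compare yours against line by line. Your reconstruction of the Antolín--Cumplido strategy has the right architecture: reduce simplex-stabilizers to the normalizer of a single proper parabolic via the power $g^{k!}$ (the vertices of $\mathcal{P}(\Gamma)$ are proper by definition, so this reduction is legitimate), show the parabolic itself has bounded orbits, and transfer ellipticity to the normalizer through an invariant bounded set. You also correctly identify Step 3 as the real content and as the place where absorbability earns its keep.

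There is, however, a genuine gap in Step 2. The set $Z=\{x\in\mathcal{C}_{AL}(\Gamma)\mid \sup_{h\in A_S}d(x,hx)\le r+100\delta\}$ is indeed nonempty and $N(A_S)$-invariant, but it is \emph{not} bounded by any routine quasi-centre estimate. The midpoint-contraction inequality $d(m,w)\le\max(d(x,w),d(y,w))-\tfrac{1}{2}d(x,y)+2\delta$ holds for the radius function $x\mapsto\sup_{y\in Y}d(x,y)$ of a \emph{fixed} bounded set $Y$, not for the displacement function $x\mapsto\sup_{h}d(x,hx)$: for the trivial (or any finite) subgroup the displacement function is globally bounded and $Z$ is all of $X$, and in a general $\delta$-hyperbolic space even an infinite-order elliptic isometry can have unbounded almost-fixed sets. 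If you instead take quasi-centres of a fixed orbit $A_S\cdot z_0$ you regain boundedness but lose $N(A_S)$-invariance, since $n\in N(A_S)$ moves that orbit. The correct repair is exactly the mechanism the paper itself uses for $\operatorname{Fix}_{50\delta}(w)$ in the final lemma of Section 3: acylindricity. If $x,y\in Z$ with $d(x,y)>R(r+100\delta+1)$, then the image of $A_S$ in $A(\Gamma)_\Delta$ — which is infinite, since a proper nontrivial parabolic is a nontrivial torsion-free Artin group meeting $\langle\Delta\rangle$ trivially — lies in the finite set $\Gamma_{r+100\delta+1}(x,y)$ of Theorem \ref{acyartin}, a contradiction; hence $Z$ has diameter at most $R(r+100\delta+1)$. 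With that substitution Step 2 closes, and the remaining burden is precisely the orbit-boundedness of $A_S$, which your sketch defers to \cite{AntolinCumplidoParabolic21} rather than proves.
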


In Section $3$, we are also going to use a technical lemma borrowed from Antolin-Cumplido paper \cite[Lemma 25]{AntolinCumplidoParabolic21}. This lemma gives the following estimate for $g\in A(E_6)$ infinite order element in the normalizer of a proper standard parabolic subgroup and $x\in\mathcal{C}_{AL}(E_6)$:
\begin{equation}
\label{tech}
    d(g\kappa^n x,\kappa^nx)\geq d(x,\kappa^n x)+K,
\end{equation}
for some constant $K>0$ and $\lvert n\rvert$ big enough.

\section{Geometric homomorphisms and the Wajnryb element}
Let $\Sigma_g^b$ a closed and oriented genus $g$ surface with $b$ boundary components. The mapping class group $\operatorname{Mod}_{g}^b$ is the group of isotopy classes of orientation-preserving diffeomorphism of $\Sigma_g^b$ that pointwise fix the boundary and where the isotopies are required to fix the components of $\partial \Sigma_g^b$ pointwise.

\begin{figure}[!tbp]
  \centering
  \begin{minipage}[b]{0.2\textwidth}
    \includegraphics[width=\textwidth]{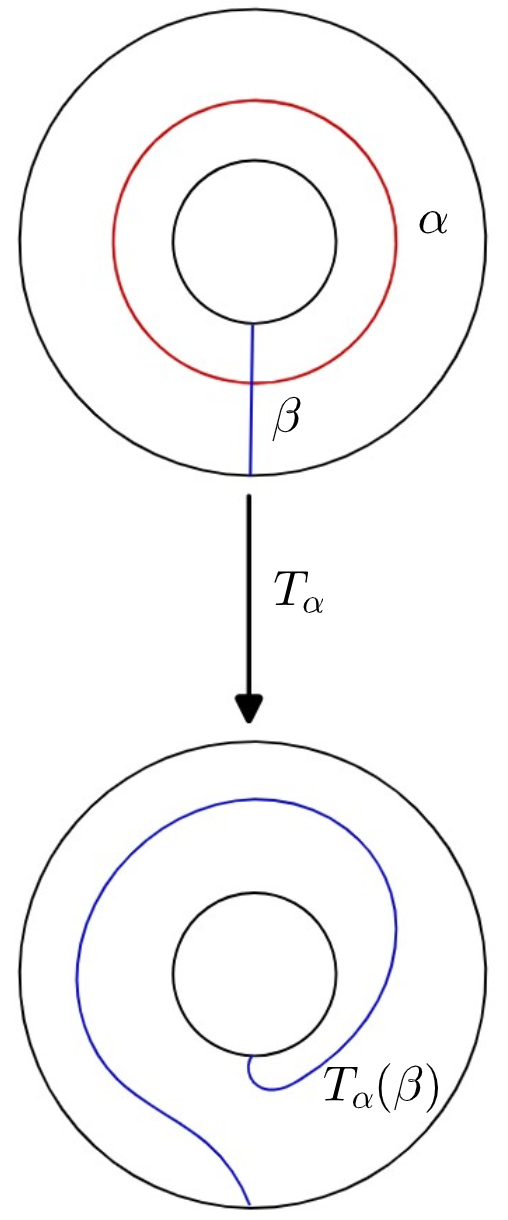}
  \end{minipage}
  \hfill
  \begin{minipage}[b]{0.5\textwidth}
    \includegraphics[width=\textwidth]{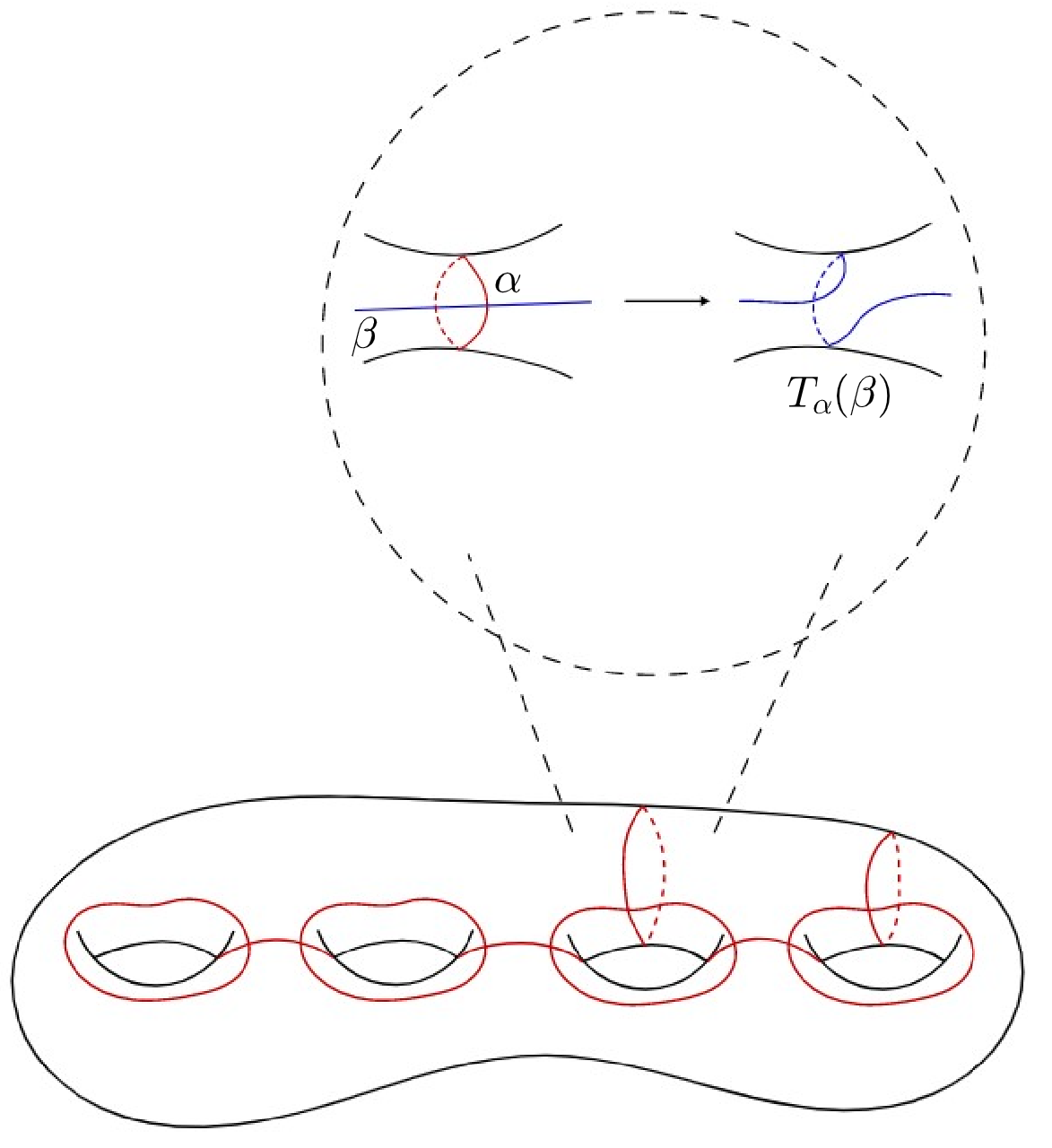}
  \end{minipage}
  \caption{The picture on the left represents the action of a Dehn twist about the curve $\alpha$ on the arc $\beta$ supported on an annulus. On the bottom right-hand side, the family of red simple closed curves represents the Humphries generating set for $\operatorname{Mod}_4$ consisting of Dehn twists about the $9$ curves shown. The top right-hand side of the picture shows the action of the Dehn twists about the red curve $\alpha$ on the blue curve $\beta$ that intersects $\alpha$ once.}
\end{figure}

\textit{Dehn twists} are diffeomorphisms of $\Sigma_g^b$ supported on the tubular neighborhood of some simple closed curve, as in Figure $3$. Two Dehn twists $T_{\gamma_1}$ and $T_{\gamma_2}$ commute if and only if $\gamma_1$ and $\gamma_2$ are disjoint, and satisfy the braid relation $T_{\gamma_1}T_{\gamma_2}T_{\gamma_1}=T_{\gamma_2}T_{\gamma_1}T_{\gamma_2}$ if and only if the geometric intersection number of $\gamma_1$ and $\gamma_2$ is exactly $1$. 

Suppose that $\Omega$ is a finite family of isotopy classes of non-essential simple closed curves on $\Sigma_g^b$. Moreover, suppose that the geometric intersection number of each pair of curves in $\Omega$ is at most $1$. The \textit{intersection graph} $\Lambda_\Omega$ of $\Omega$ is the graph with set of vertices $\Omega$ and edges for any pair of intersecting curves. Then, any pair of Dehn twists about curves in $\Omega$ either commute or satisfy the Braid relation. The group $$\operatorname{Mod}_g^b(\Omega)=\langle T_\gamma\mid\gamma\in\Omega\rangle$$ is then the quotient of the Artin group $A(\Lambda_\Omega)$ by some normal subgroup. We will call the quotient map
$$\varphi_{\Omega}:A(\Lambda_\Omega)\twoheadrightarrow\operatorname{Mod}_g^b(\Omega)$$ a \textit{geometric homomorphism}.

It is known that the $A_n$ and $D_n$ type Artin groups can be embedded into the mapping class group of some surfaces via a geometric homomorphism  \cite[Théorème 1]{Perron1996}. However, Wajnryb proved that for the $E_6$-type Artin group this is never the case \cite[Theorem 3]{Wajnryb1999}. 

\begin{theorem}
    Any geometric homomorphism of an Artin group $A(\Gamma)$ is not injective as long as $\Gamma$ contains $E_6$ as a subgraph.
\end{theorem}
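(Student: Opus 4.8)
The plan is to reduce the statement to the single group $A(E_6)$ and then quote Wajnryb's explicit element. Recall from van der Lek \cite{lek} that whenever $E_6$ occurs as a full subgraph of $\Gamma$, the induced map $A(E_6)\to A(\Gamma)$ is injective. Now, given a geometric homomorphism $\varphi_\Omega:A(\Lambda_\Omega)\twoheadrightarrow\operatorname{Mod}_g^b(\Omega)$ with $E_6\subseteq\Lambda_\Omega$, the six curves of $\Omega$ indexed by the vertices of the $E_6$ subgraph form a curve system whose intersection graph is exactly $E_6$; a regular neighbourhood $N$ of this system is a copy of $\Sigma_3^1$, and the restriction of $\varphi_\Omega$ to $A(E_6)\le A(\Lambda_\Omega)$ factors as the model geometric homomorphism $A(E_6)\to\operatorname{Mod}(N)$ followed by the map $\operatorname{Mod}(N)\to\operatorname{Mod}_g^b(\Omega)$ induced by the inclusion $N\hookrightarrow\Sigma_g^b$. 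Since the $E_6$ curve configuration of Figure \ref{perron1}, and hence the pair $(N,\{\gamma_i\})$, is rigid (determined up to homeomorphism by the graph $E_6$), it suffices to exhibit one element $w\in A(E_6)$, depending only on $E_6$, with $w\neq 1$ in $A(E_6)$ but $\varphi(w)=1$ for the model geometric homomorphism $\varphi:A(E_6)\to\operatorname{Mod}(\Sigma_3^1)$: by van der Lek $w$ is then still non-trivial in $A(\Gamma)$, while $\varphi_\Omega(w)=1$ in $\operatorname{Mod}_g^b(\Omega)$.

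To build $w$, look on $\Sigma_3^1$ (Figure \ref{perron1}) for a relation among Dehn twists that is forced by the intersection pattern alone — hence survives to every geometric homomorphism — yet is not a consequence of the defining Artin relations of $A(E_6)$. Concretely, following Wajnryb one produces two simple closed curves $\alpha,\beta$ on $\Sigma_3^1$ with $i(\alpha,\beta)=1$, each equal to the image of an explicit word $u_\alpha,u_\beta$ in the standard generators $a_1,\dots,a_6$; one may arrange that $u_\alpha,u_\beta$ are conjugates of generators, so that $T_\alpha,T_\beta$ are honest Dehn twists conjugate to the $T_{\gamma_i}$. The braid relation for Dehn twists then gives $T_\alpha T_\beta T_\alpha=T_\beta T_\alpha T_\beta$ in $\operatorname{Mod}(\Sigma_3^1)$, so a suitable braid relator — the \emph{Wajnryb element}, schematically
\begin{equation*}
w=u_\alpha\,u_\beta\,u_\alpha\,u_\beta^{-1}\,u_\alpha^{-1}\,u_\beta^{-1}\in A(E_6)
\end{equation*}
— lies in $\ker\varphi$. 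Because the words $u_\alpha,u_\beta$ realise $T_\alpha,T_\beta$ purely from the local combinatorics of the $E_6$ curve system, the identity $\varphi(w)=1$ reduces to the braid relation together with relations (of chain and lantern type) among the $T_{\gamma_i}$ that are forced by the pattern $E_6$; hence the same word $w$ kills every geometric homomorphism of $A(E_6)$, and thus of $A(\Gamma)$.

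The crux is to verify that $w\neq 1$ in $A(E_6)$, and this is where the spherical type is essential. As $A(E_6)$ is a Garside group with Garside element $\Delta$ and a finite lattice of simple elements (indexed by the finite Coxeter group $W(E_6)$), the word problem is algorithmic: one computes the left normal form $w=\Delta^k s_1\cdots s_m$ and checks that it is \emph{not} the normal form of the identity, i.e. that $\inf(w)$, $\sup(w)$ and the factors $s_i$ do not all degenerate. Equivalently, $w\neq1$ can be certified by exhibiting a finite quotient of $A(E_6)$ in which the image of $w$ is visibly non-trivial; what must be avoided is relying on the Coxeter quotient $W(E_6)$, where Wajnryb's particular braid relator collapses — which is precisely why the finer Garside computation is needed. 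This finite but delicate verification, carried out in \cite{Wajnryb1999}, is the main obstacle; granting it, the theorem follows from the reduction above.
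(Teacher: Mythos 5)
Your proposal follows essentially the same route as the paper: reduce to $A(E_6)$ via van der Lek's embedding theorem, note that the restriction of any geometric homomorphism to the $E_6$-configuration factors through the mapping class group of its regular neighbourhood $\Sigma_3^1$, and invoke Wajnryb's braid relator $w=a_1a_1^ba_1\cdot(a_1^{-1})^ba_1^{-1}(a_1^{-1})^b$, whose image vanishes because $\varphi(b)$ carries $\gamma_1$ to a curve meeting it once and whose non-triviality is certified by the Garside normal form computation in \cite{Wajnryb1999}. The only quibble is your parenthetical appeal to ``chain and lantern type'' relations: no lantern relation enters the verification that $\varphi(w)=1$, which uses only the braid relation for the once-intersecting pair of curves, but this does not affect the argument.
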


Wajnryb found a non-trivial element $w\in A(E_6)$ that maps trivially in $\operatorname{Mod}_{3}^1$ via a geometric homomorphism $\varphi_\Omega$, where the set of curves $\Omega$ has intersection graph $E_6$. However, the result can be extended to Artin groups with defining graph $\Gamma$ containing $E_6$, as the embedding of $E_6$ in $\Gamma$ induces an inclusion of $A(E_6)$ in $A(\Gamma)$ and therefore an inclusion of mapping class groups. (see \cite{lek} and \cite[Theorem 3.18]{farb2011primer}).


With respect to Figure \ref{perron1}, the \textit{Wajnryb element} $w$ can be written as a word in the alphabet $\{a_1,b\}\subset A(E_6)$, where $$b=a_4a_5a_3a_4a_2a_6a_5a_3a_4$$ is contained in a parabolic subgroup isomorphic to $\mathcal{B}_6$. The element $b$ has the following braid representation.

\begin{figure}[h]
    \centering
    \includegraphics[scale=0.1]{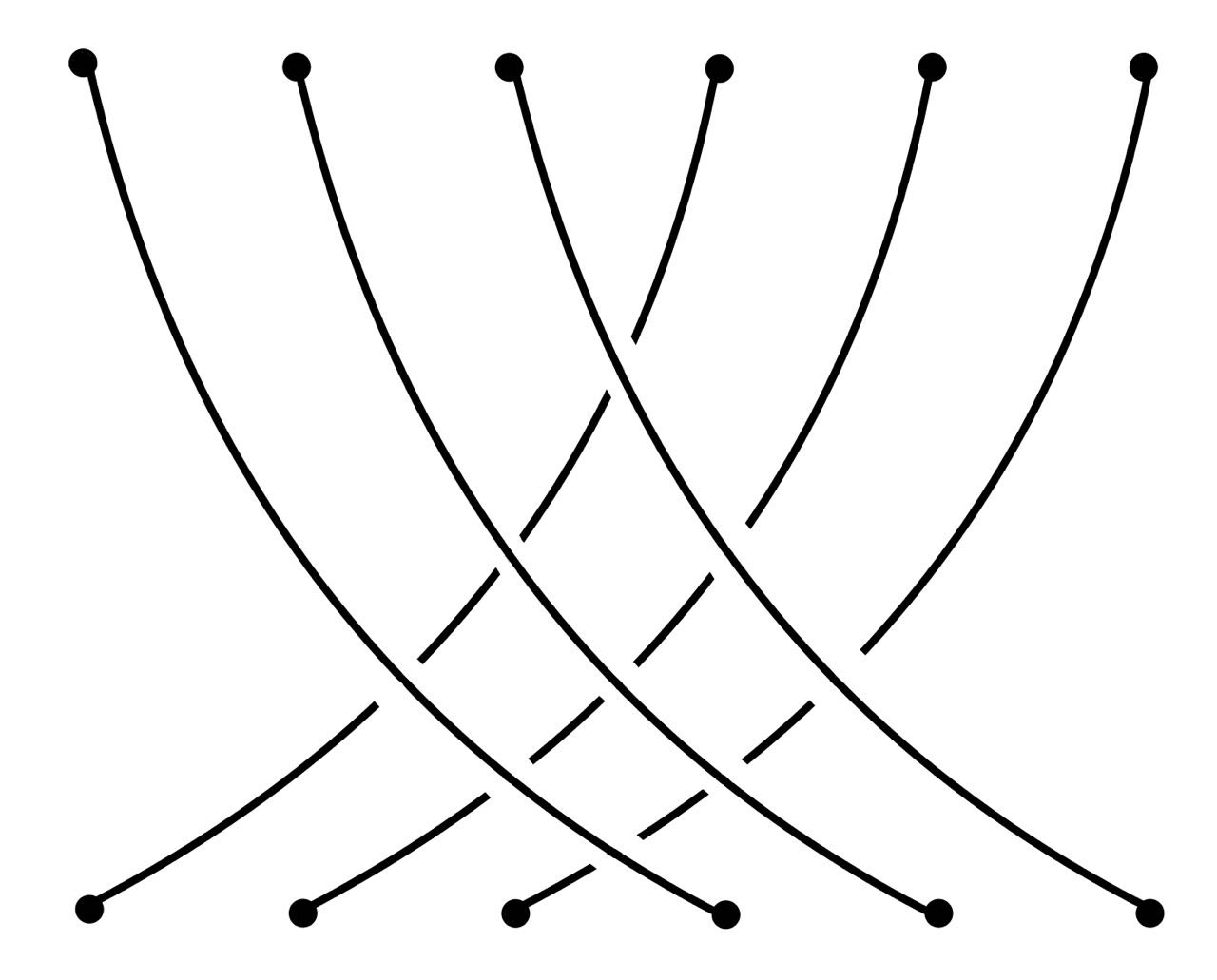}
    \caption{Braid representation of $b$.}
    \label{braidb}
\end{figure}

The image of $b$ via the geometric homomorphism $\varphi_\Omega$ is represented by a diffeomorphism which maps the simple closed curve $\gamma_1$ to a simple closed curve $\beta$ that intersects $\gamma_1$ once. Hence, the Dehn twists $T_{\gamma_1}$ and $T_\beta$ satisfy the braid relation and the Wajnryb element  
\begin{align*}
\label{wajnryb}
    w=a_1a_1^ba_1\cdot (a_1^{-1})^ba_1^{-1}(a_1^{-1})^b,
\end{align*}
acts trivially on $\Sigma_{3,1}$ as a mapping class. Indeed, its image via $\varphi_{\Omega}$ is precisely $T_{\gamma_1}T_\beta T_{\gamma_1}\cdot T_\beta^{-1}T_{\gamma_1}^{-1}T_\beta^{-1}=1$. Wajnryb proved that the group element $w$ is non-trivial applying the Garside algorithm. However, it is less clear why $w$ should geometrically describe a non-trivial group element in $A(E_6)$. 

\begin{figure}[h]
    \centering
    \includegraphics{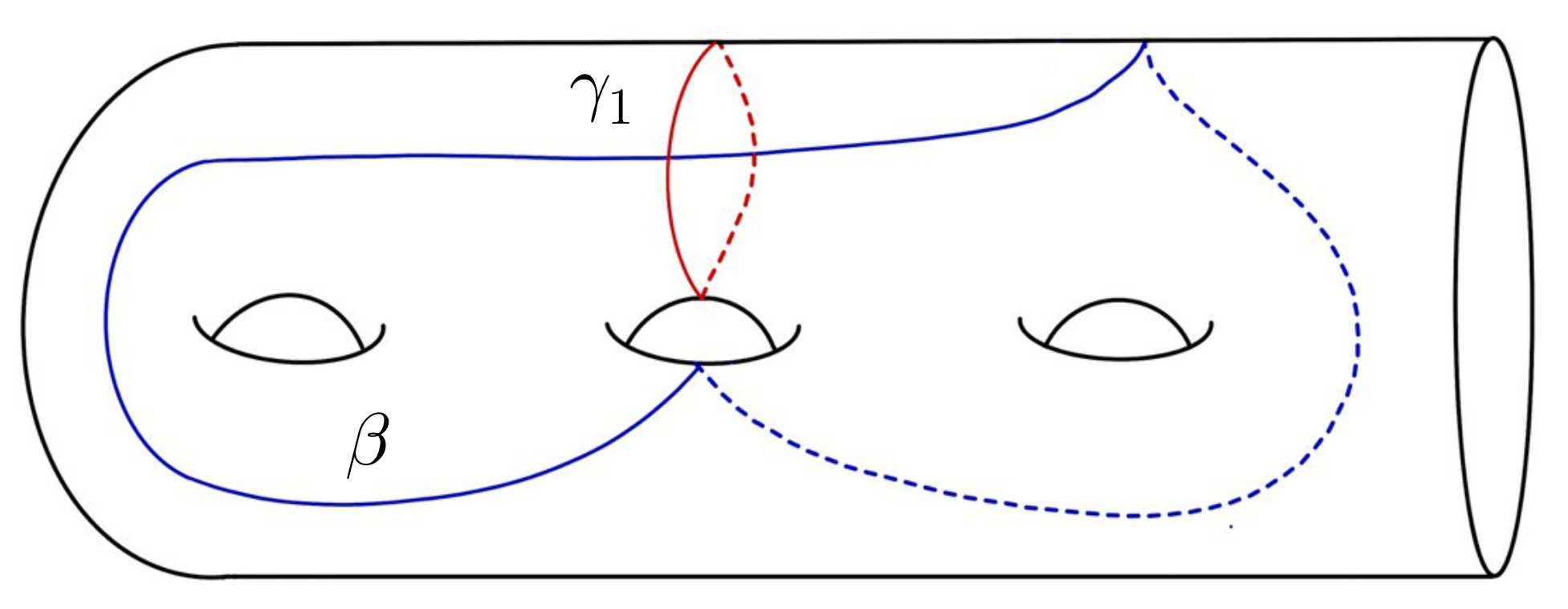}
    \caption{The curves $\beta$ and $\gamma_1$ in blue and red, respectively.}
\end{figure}

\section{Kernels of geometric homomorphisms}
In this section we construct a non-abelian free subgroup of rank 2 in the kernel of any geometric homomorphism of Artin group with defining graph containing $E_6$.

Recall that $\kappa\in A(\Gamma)_\Delta$ is the loxodromic isometry of $\mathcal{C}_{AL}(E_6)$ in (\ref{kappa}). In view of the Abbott-Dahmani result \cite[Proposition 2.1]{AbbottDahmaniPnaive2019}, we show that the Wajnryb element $w$ is an elliptic isometry of $\mathcal{C}(E_6)$, that none of its powers preserve the quasi-axis $A_{10\delta}(\kappa)$ and that $\operatorname{Fix}_{50\delta}(w)$ is a bounded set. It will follow that there is a power $n\in\ZZ$ such that the subgroup $\langle w,\kappa^n\rangle$ is a non-abelian free group of rank $2$. 

\begin{proof}[Proof of Theorem \ref{thmb}]
Let $\Omega$ be a collection of isotopy classes of non-essential simple closed curves on $\Sigma_g^b$ that pairwise intersect at most once, and suppose that its intersection graph $\Lambda_\Omega$ contains $E_6$. 

The hypotheses of  the Abbott-Dahmani result \cite[Proposition 2.1]{AbbottDahmaniPnaive2019} are satisfied for $w$ and $\kappa$ by Lemma \ref{l1},  Lemma \ref{l2} and  Lemma \ref{l3} below. However, the loxodromic isometry $\kappa$ is not in the kernel of $\varphi_\Omega$. Nevertheless, if we denote by $w^{\kappa^n}$ the conjugate $\kappa^{-n}w\kappa^n$, the group $\langle w,w^{\kappa^n}\rangle$ is contained in $\ker\varphi_\Omega$ and it is also isomorphic to $F_2$, as any combination of letters in $\{w,w^{\kappa^n}\}$ that represents a trivial word is also a combination of letters in $\{w,\kappa^n\}$.

\end{proof}

\begin{lemma}
\label{l1}
The projection of $w$ in $A(E_6)_\Delta$ is an elliptic isometry of the additional length graph $\mathcal{C}_{AL}(E_6)$.
\end{lemma}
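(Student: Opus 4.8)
The strategy is to invoke Theorem \ref{ell}: it suffices to exhibit a simplex of the complex of irreducible parabolic subgroups $\mathcal{P}(E_6)$ that is preserved by (the image in $A(E_6)_\Delta$ of) the Wajnryb element $w$. In fact the cleanest route is to show that $w$ lies in the normalizer of a \emph{proper} parabolic subgroup, since Theorem \ref{ell} states that such normalizers act elliptically. Recall from Section $2$ that $w = a_1 a_1^b a_1 \cdot (a_1^{-1})^b a_1^{-1} (a_1^{-1})^b$, where $b = a_4a_5a_3a_4a_2a_6a_5a_3a_4$ lies in a parabolic $\mathcal{B}_6$ not containing $a_1$. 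The natural candidate parabolic is the one generated by $a_1$ together with all the generators appearing in $b$ — but that is everything, so instead I would look for the \emph{smallest} standard parabolic $P$ whose conjugate contains both $a_1$ and $a_1^b$; this is a rank-$2$ parabolic (the two Dehn-twist curves $\gamma_1$ and $\beta$ intersect once, so the subgroup $\langle a_1, a_1^b\rangle$ is a copy of $\mathcal{B}_3$ sitting inside $A(E_6)$ as an irreducible parabolic of type $A_2$).

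Concretely, the key observation is that $w$ is visibly a word in the two elements $a_1$ and $a_1^b$ alone: writing $c := a_1^b$, we have $w = a_1\, c\, a_1\, c^{-1} a_1^{-1} c^{-1}$. So $w$ belongs to the subgroup $H := \langle a_1, c\rangle$. The first step is to identify $H$ as an irreducible parabolic subgroup of $A(E_6)$ of type $A_2$: since $c = a_1^b$ is conjugate to a standard generator it is a \emph{band generator}, and $a_1, c$ are two band generators whose associated noncrossing ``bands'' cross exactly once (mirroring the geometric fact $i(\gamma_1,\beta)=1$), so $\langle a_1, c\rangle$ is a parabolic subgroup $A(A_2)\cong \mathcal{B}_3$. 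This uses the now-standard theory of parabolic subgroups of spherical Artin groups (Cumplido--Gebhardt--González-Meneses--Wiest and Godelle), which guarantees that an appropriate pair of conjugated generators with the right crossing datum generates a parabolic of the expected type; I would cite this rather than reprove it. Then the second step: $w \in H$ and $H$ is a vertex of $\mathcal{P}(E_6)$, and an element contained in a parabolic subgroup certainly ``preserves'' the simplex consisting of that single vertex, so Theorem \ref{ell} applies and $w$ acts elliptically on $\mathcal{C}_{AL}(E_6)$.

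The main obstacle is verifying rigorously that $H = \langle a_1, a_1^b\rangle$ really is a parabolic subgroup of $A(E_6)$ — it is \emph{not} a standard parabolic, and a priori a subgroup generated by two conjugates of standard generators need not be parabolic at all. I would handle this by checking that $b$ can be chosen to conjugate the standard parabolic on $\{a_1, a_3\}$ (or whichever adjacent pair has $\beta = \varphi_\Omega$-image $T_\beta$ lifting correctly) onto $\langle a_1, a_1^b\rangle$; equivalently, that the element $b$, read in the geometric picture of Figure \ref{perron1}, conjugates the subsurface filled by $\gamma_1$ and one neighbor onto the subsurface filled by $\gamma_1$ and $\beta$, so that on the Artin-group side $b$ conjugates a rank-$2$ standard parabolic to $H$. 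Once this parabolicity is in hand, the rest is immediate from Theorem \ref{ell}. As a fallback, if the direct parabolic identification proves delicate, one can instead observe that $a_1$ and $c = a_1^b$ together lie in the proper standard parabolic $P_0$ generated by $\{a_1,a_2,a_3,a_4,a_5,a_6\}\setminus\{a_6\}$ — one checks $b$, hence $c$, involves no essential use of $a_6$ after reduction, or more honestly one picks $b$'s expression to omit one generator — and then $w\in P_0$ a proper parabolic forces ellipticity; but the rank-$2$ argument is cleaner and more robust, so that is the one I would write up.
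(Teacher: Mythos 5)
Your reduction to Theorem \ref{ell} is the right opening move, but the parabolic subgroup you propose cannot be what you say it is, and the obstruction is precisely the content of Wajnryb's theorem. You claim that $H=\langle a_1, a_1^b\rangle$ is an irreducible parabolic of type $A_2$, with $a_1$ and $c=a_1^b$ playing the role of two band generators whose bands cross once. Any such pair of band generators in a parabolic of type $A_2$ (isomorphic to $\mathcal{B}_3$) satisfies the braid relation, so your identification would force $a_1ca_1=ca_1c$ in $A(E_6)$, i.e.\ $w=1$ --- contradicting the fact, used throughout the paper, that $w$ is a nontrivial (indeed infinite-order) element of $A(E_6)$. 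The whole point of the Wajnryb construction is that the geometric relation $i(\gamma_1,\beta)=1$ forces the braid relation between $T_{\gamma_1}$ and $T_\beta$ in the mapping class group but does \emph{not} lift to $A(E_6)$; asserting that $\langle a_1,a_1^b\rangle$ is a parabolic ``of the expected type'' smuggles that lift back in. Your fallback is also unsubstantiated: $b=a_4a_5a_3a_4a_2a_6a_5a_3a_4$ genuinely involves $a_6$, there is no given reason for $a_1^b$ to lie in a proper standard parabolic, and one cannot simply ``pick $b$'s expression to omit one generator.''

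The paper's proof sidesteps this entirely. It uses the containment $w\in\langle a_1,b\rangle$ (immediate, since $w$ is a word in $a_1$ and $a_1^b=b^{-1}a_1b$) and shows that this subgroup preserves the simplex $\{\langle a_2\rangle,\langle a_5\rangle\}$ of $\mathcal{P}(E_6)$: conjugation by $b$ permutes $a_2$ and $a_5$, while $a_1$ centralizes both. Theorem \ref{ell} then yields ellipticity of $w$. The lesson is that one should not try to make the subgroup generated by the letters of $w$ itself parabolic; instead one exhibits a simplex of $\mathcal{P}(E_6)$ that those letters visibly normalize.
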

\begin{proof}
We would like to apply the Antolin-Cumplido criterion from Theorem \ref{ell}. It is enough to show that the subgroup $\langle a_1,b \rangle$ normalizes the parabolic subgroup $\langle a_2,a_5\rangle$. The action of $b$ by conjugation on $A(E_6)$ permutes $a_2$ and $a_5$ (see Figure \ref{bnorm}). Since $a_1$ is in the centralizer of both $a_2$ and $a_5$, we can conclude that the group generated by $a_1$ and $b$ preserves the $2$-simplex $\{\langle a_2\rangle\ ,\langle a_5\rangle\}$ of the complex $\mathcal{P}(E_6)$.

\end{proof}

\begin{figure}[h]
    \centering
    \includegraphics[scale=0.2]{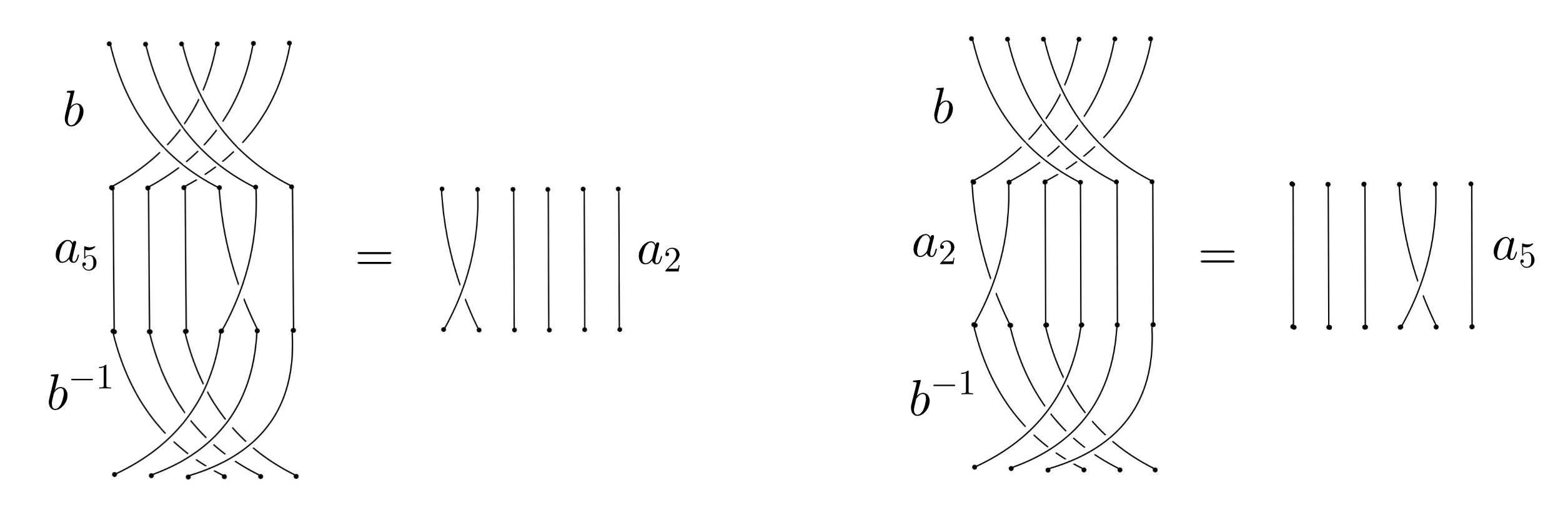}
    \caption{Braid representation of the conjugacy action of $b$ on $\langle a_2,a_5\rangle$.}
    \label{bnorm}
\end{figure}
\begin{lemma}
\label{l2}
    No non-trivial power of the Wajnryb element $w\in A(E_6)$ preserves the $10\delta$-quasi fixed axis $A_{10\delta}(\kappa)$ of $\kappa$.
\end{lemma}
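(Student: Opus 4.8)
The plan is to show that if some non-trivial power $w^m$ preserved the quasi-axis $A_{10\delta}(\kappa)$, then $w^m$ and $\kappa$ would together fix a common point in the Gromov boundary $\partial\mathcal{C}_{AL}(E_6)$, which contradicts the fact that an elliptic isometry and a loxodromic isometry cannot share a boundary fixed point in an acylindrically hyperbolic setting. Concretely, I would argue as follows.

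First, recall from Lemma \ref{l1} that $w$ acts elliptically on $\mathcal{C}_{AL}(E_6)$, so every power $w^m$ is elliptic and has bounded orbits; in particular $w^m$ fixes no point of $\partial\mathcal{C}_{AL}(E_6)$. On the other hand, $\kappa$ is loxodromic, so it has exactly two fixed points $\kappa^{\pm\infty}\in\partial\mathcal{C}_{AL}(E_6)$, and the quasi-axis $A_{10\delta}(\kappa)$ is a quasi-geodesic with endpoints $\kappa^{\pm\infty}$. The key observation is that if $w^m$ preserves $A_{10\delta}(\kappa)$ as a set, then $w^m$ either fixes each of $\kappa^{\pm\infty}$ or swaps them; in either case $w^{2m}$ fixes both $\kappa^{+\infty}$ and $\kappa^{-\infty}$. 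But $w^{2m}$ is still elliptic (a power of an elliptic isometry), and an elliptic isometry with bounded orbits cannot fix a point of the Gromov boundary of a $\delta$-hyperbolic space unless $w^{2m}=1$ modulo bounded ambiguity — more precisely, by the classification of isometries in \cite[Theorem 1.1]{Osin2016} or directly from hyperbolicity, the only isometries fixing two distinct boundary points are loxodromic. Hence $w^{2m}$, being elliptic and fixing two boundary points, must be the identity in $A(E_6)_\Delta$.

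To finish, I need to rule out $w^{2m}=1$ in $A(E_6)_\Delta$ for $m\neq 0$, equivalently that $w$ has infinite order modulo the center of $A(E_6)$. This follows because $w$ lies in the normalizer of the parabolic $\langle a_2,a_5\rangle$, which is virtually the parabolic times a complement; concretely, by the structure of the Wajnryb element $w=a_1a_1^b a_1\cdot (a_1^{-1})^b a_1^{-1}(a_1^{-1})^b$ and the fact (cited from Wajnryb \cite{Wajnryb1999}, established via the Garside normal form) that $w$ is a non-trivial element of $A(E_6)$ that is not central — indeed one can read off from the Garside algorithm that $w$ has $\inf(w)=0$ and nonzero canonical length, so no power of $w$ equals a power of $\Delta$. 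Alternatively, since $w$ projects into the subgroup $\langle a_1,b\rangle$ which maps to a group generated by two non-commuting Dehn-twist-like elements satisfying a braid relation, $w$ has infinite order there; a cleaner route is to invoke \cite[Lemma 25]{AntolinCumplidoParabolic21}, which is stated precisely for $g\in A(E_6)$ an \emph{infinite order} element of the normalizer of a proper standard parabolic — so I should simply record that $w$ has infinite order modulo the center as part of the setup (citing Wajnryb's Garside computation) and then deduce the claim. Thus $w^{2m}\neq 1$ in $A(E_6)_\Delta$, the contradiction stands, and no non-trivial power of $w$ preserves $A_{10\delta}(\kappa)$.

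The main obstacle I anticipate is making the step ``$w^m$ preserves the quasi-axis $\Rightarrow$ $w^{2m}$ fixes both endpoints $\Rightarrow$ $w^{2m}=1$'' fully rigorous, since the quasi-axis $A_{10\delta}(\kappa)$ is only coarsely defined and a priori an isometry preserving it need not act on it ``like a translation or reflection.'' The honest way around this is to quote the already-cited estimate from Antolin--Cumplido: inequality \eqref{tech} in the excerpt says that for an infinite-order $g$ in the normalizer of a proper standard parabolic (such as $w$), and for $|n|$ large, $d(g\kappa^n x,\kappa^n x)\geq d(x,\kappa^n x)+K$. Since points $\kappa^n x$ with $|n|$ large lie within bounded distance of $A_{10\delta}(\kappa)$ (they coarsely trace it out), if $w^m$ preserved $A_{10\delta}(\kappa)$ then $d(w^m\kappa^n x,\kappa^n x)$ would stay bounded as $|n|\to\infty$ (an isometry preserving a set moves its points a bounded amount from the set, and the set has bounded ``width''), contradicting the linear growth forced by \eqref{tech}. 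This is the argument I would write out: it is short, uses only results already in the excerpt, and sidesteps any delicate discussion of boundary dynamics.
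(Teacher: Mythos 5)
Your second argument, the one you ultimately commit to, is essentially the paper's proof. The paper fixes $x\in A_{10\delta}(\kappa)$, uses the assumed invariance of $A_{10\delta}(\kappa)$ under $w$ and under $\kappa$ together with the triangle inequality to produce a bound on $d(w\kappa^nx,\kappa^nx)$ that is independent of $n$, and then contradicts the Antol\'in--Cumplido estimate (\ref{tech}), whose right-hand side $d(x,\kappa^nx)+K$ tends to infinity because $\kappa$ is loxodromic. So the scheme ``invariance $\Rightarrow$ uniformly bounded displacement along the orbit $\kappa^nx$, versus (\ref{tech}) $\Rightarrow$ unbounded displacement'' is exactly what is in the paper, and your instinct to route everything through (\ref{tech}) rather than through boundary dynamics is the right one.

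Two caveats. First, your discarded boundary argument is not merely ``hard to make rigorous'': it rests on the claim that an elliptic isometry of a $\delta$-hyperbolic space fixing two boundary points must be trivial, which is false in general (in a tree, a finite-order isometry can fix two branches pointwise and hence two ends while permuting the others; more to the point, an elliptic isometry can coarsely preserve a bi-infinite geodesic and fix both of its endpoints). So that route genuinely fails, and you were right to abandon it. Second, the crux step --- that $w^m$ preserving $A_{10\delta}(\kappa)$ forces $d(w^m\kappa^nx,\kappa^nx)$ to be bounded independently of $n$ --- is not justified by saying the set has ``bounded width'': $A_{10\delta}(\kappa)$ is an unbounded quasi-line, and an isometry preserving it setwise acts on it coarsely as $t\mapsto\varepsilon t+c$ with $\varepsilon=\pm1$. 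In the orientation-reversing case the displacement grows linearly along the axis, and a direct comparison with (\ref{tech}) yields no contradiction. The clean repair is to replace $w^m$ by $w^{2m}$, which is still a non-trivial (hence infinite-order) element of the normalizer of $\langle a_2,a_5\rangle$ preserving the quasi-axis, and which must act on it coarsely as a translation; its displacement along the orbit $\kappa^nx$ is then a constant independent of $n$, and (\ref{tech}) applied to $w^{2m}$ gives the contradiction. The paper's own inequality chain is also terse at exactly this point (its final estimate bounds $d(w^{\kappa^n}x,x)$ using only membership of points in $A_{10\delta}(\kappa)$, which controls their $\kappa$-displacement rather than their distance to $x$), so your write-up should be more careful here than the source, not less.
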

\begin{proof}
    Let $x\in\operatorname{A}_{10\delta}(\kappa)$ be a vertex of $\mathcal{C}_{AL}(E_6)$. If we suppose that $w$, or any of its non trivial power, preserves $\operatorname{A}_{10\delta}(\kappa)$ we would have that
\begin{align*}
    d(w\kappa^nx,\kappa^nx)&\leq d(w\kappa^nx,\kappa^nwx)+d(\kappa^nw x,\kappa^nwx)& \text{(triangular inequality)}\\
    &=d(w^{\kappa^n} x, w x)+d(w x, x)&(\kappa\text{ is an isometry})\\
    &\leq  d(w^{\kappa^n} x, x)+2d(w x, x)&\text{(triangular inequality)}\\
    &\leq \inf_{y\in\mathcal{C}_{AL}(E_6)}d(y,\kappa y)+10\delta+2d(wx, x),&\text{(definition of }A_{10\delta}(\kappa))
\end{align*}
for any $n\in\ZZ$, where the last inequality follows from the fact that also $\kappa$ preserves $\operatorname{A}_{10\delta}(\kappa)$. However, the inequality (\ref{tech}) implies that it cannot happen, as $\kappa$ is loxodromic.

\end{proof}

Every spherical-type Artin group has a finite $K(\pi,1)$ space given by the complement of a hyperplane arrangement associated with the respective Coxeter group (see, for example, \cite{Deligne1972}). In particular, the Artin group $A(E_6)$ is torsion-free \cite[Proposition 2.45]{hatcher2002algebraic}. However,  the quotient $A(E_6)_\Delta$ has torsion elements but the Wajnryb element $w$ is not a periodic isometry of $\mathcal{C}_{AL}(E_6)$. 

In order to prove the following lemma, we recall that  standard generators $\{a_1,\dots,a_n\}$ of an Artin group $A(\Gamma)$ are related by length-preserving relations and the map \begin{align*}
    \operatorname{deg}:A(\Gamma)&\rightarrow\ZZ \\
    a_{i_1}^{n_1}\dots a_{i_k}^{n_k}&\mapsto \sum_{j=1}^k n_k
\end{align*}
is a homomorphism. More precisely, the commutator subgroup of $A(\Gamma)$ is exactly the kernel of the length homomorphism $\operatorname{deg}:A(\Gamma)\rightarrow\ZZ$ \cite[Proposition 3.1]{Mulholland2002}. 

\begin{lemma}
\label{l3}
    The Wajnryb element $w$ is not torsion in $A(E_6)_\Delta$.
\end{lemma}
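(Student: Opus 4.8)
The strategy is to show that $w$ has infinite order in $A(E_6)_\Delta$ by ruling out the only alternative: that some power $w^k$ lies in the center $\langle\Delta^c\rangle$ of $A(E_6)$, where $\Delta^c$ generates the center. If $w$ were torsion in $A(E_6)_\Delta$, then $w^k = \Delta^{cm}$ for some $k \geq 1$ and some $m \in \ZZ$. The key observation is that $\deg$ is a well-defined homomorphism $A(E_6) \to \ZZ$ that is invariant under the length-preserving defining relations, so $\deg$ is constant on conjugacy classes of generators and, more importantly, $\deg(x^{g}) = \deg(x)$ for any $g$. Reading off the Wajnryb word $w = a_1 a_1^b a_1 \cdot (a_1^{-1})^b a_1^{-1} (a_1^{-1})^b$, we compute $\deg(w) = 1 + 1 + 1 - 1 - 1 - 1 = 0$, so $w$ lies in the commutator subgroup. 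On the other hand, $\deg(\Delta)$ for $A(E_6)$ is positive (it is the length of the positive half-twist, equal to the number of reflections in $W(E_6)$, which is $36$), so $\deg(\Delta^{cm}) = 36cm$. Hence $\deg(w^k) = 0$ forces $m = 0$, i.e. $w^k = 1$ in $A(E_6)$ itself.

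The second step is then to invoke torsion-freeness of $A(E_6)$: since $A(E_6)$ is a spherical-type Artin group, it has a finite $K(\pi,1)$ (the complement of the reflection hyperplane arrangement, by Deligne), hence has finite cohomological dimension and in particular is torsion-free. So $w^k = 1$ forces $w = 1$ in $A(E_6)$. But Wajnryb showed via the Garside normal-form algorithm that $w \neq 1$ in $A(E_6)$ (this is recalled in Section 2). This contradiction shows no nonzero power of $w$ can be central, so $w$ is non-torsion in $A(E_6)_\Delta$. I would also remark that, combined with the dichotomy from Osin's theorem (every element of $A(\Gamma)_\Delta$ is loxodromic or elliptic) and Lemma \ref{l1} (which shows $w$ is elliptic, hence not loxodromic), this confirms that $w$ is a genuinely elliptic isometry of infinite order, which is exactly what the hypothesis of the Abbott--Dahmani proposition requires when combined with Lemma \ref{l2} and Lemma \ref{l3}.

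The main obstacle — really the only subtlety — is making sure the relationship between "torsion in $A(E_6)_\Delta$" and "some power lands in the center of $A(E_6)$" is stated correctly: an element of the quotient $G/Z(G)$ has finite order exactly when some positive power of a lift lies in $Z(G) = \langle\Delta^c\rangle$, and one must be careful that this power could a priori be a nontrivial power of $\Delta$, which is precisely what the degree computation kills. Everything else is routine: the degree homomorphism is already set up in the paragraph preceding the lemma, non-triviality of $w$ in $A(E_6)$ is Wajnryb's theorem quoted in Section 2, and torsion-freeness of $A(E_6)$ is the cited consequence of the finite $K(\pi,1)$. So the proof is short, and I expect it to read as essentially a two-line argument once the degree of $\Delta$ is pinned down.
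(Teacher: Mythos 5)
Your proposal is correct and follows essentially the same route as the paper's own proof: both reduce torsion in $A(E_6)_\Delta$ to a power of $w$ landing in $\langle\Delta\rangle$, kill that possibility with the degree homomorphism (using $\deg(w)=0$ and $\deg(\Delta)>0$), and then conclude via torsion-freeness of $A(E_6)$ together with Wajnryb's non-triviality of $w$. Your explicit computation $\deg(\Delta)=36$ matches the paper's $\Delta=(a_1a_3a_5a_2a_4a_6)^6$, so there is nothing to add.
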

\begin{proof}
Suppose there is some $m\in\ZZ$ such that $w^m$ is central in $A(E_6)$ and can be written as $\Delta^{k}$ for some integer $k$. The degree $\operatorname{deg}(w)$ is zero and therefore we can write $w$ as a commutator. However, the Garside element of $A(E_6)$ is $\Delta=(a_1a_3a_5a_2a_4a_6)^6$ and has positive length. Hence, we have that$$0=\operatorname{deg}(w^m)=\operatorname{deg}(\Delta^{k})=k\cdot\operatorname{deg}(\Delta)$$ and $k$ is then forced to be equal to zero. Since $A(E_6)$ is torsion-free, the only possibility for the  $m^{th}$-power of $w$ to be trivial is that $m=0$. 

\end{proof}

The set $\operatorname{Fix}_{50\delta}(w)$ is then necessarily bounded.

\begin{lemma}
    Let $G$ be a group acting acylindrically on a $\delta$-hyperbolic space $X$. If $\operatorname{Fix}_{K}(g)$ is unbounded, then $g$ has finite order.   
\end{lemma}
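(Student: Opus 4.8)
The plan is to prove the contrapositive in a quantitative form: if $g$ has infinite order, then $\operatorname{Fix}_K(g)$ has diameter bounded in terms of $K$, $\delta$ and the acylindricity function. The key observation is that $\operatorname{Fix}_K(g) = \{x \mid d(x, g^n x) \le K \text{ for all } n \in \ZZ\}$ is contained, for \emph{each single} $n$, in the set $\{x \mid d(x, g^n x) \le K\}$, so it suffices to understand how points that are moved a bounded amount by \emph{two} suitably chosen powers $g^{n_1}, g^{n_2}$ interact with the acylindricity condition.

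First I would recall the acylindricity hypothesis in the form supplied earlier in the paper (Theorem \ref{acyartin}, or rather its abstract version): for every $\varepsilon > 0$ there exist $R(\varepsilon) > 0$ and $N(\varepsilon) > 0$ such that whenever $d(x,y) > R(\varepsilon)$, the set of group elements $h$ with $d(x,hx) < \varepsilon$ and $d(y,hy) < \varepsilon$ has cardinality at most $N(\varepsilon)$. Set $\varepsilon$ to be something like $2K + 10\delta$ (any constant strictly larger than $K$ adjusted by the hyperbolicity constant will do; the precise value is a routine matter). Now suppose $\operatorname{Fix}_K(g)$ has two points $x,y$ with $d(x,y) > R(\varepsilon)$. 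Then every power $g^n$ satisfies $d(x, g^n x) \le K < \varepsilon$ and $d(y, g^n y) \le K < \varepsilon$, so $\{g^n : n \in \ZZ\} \subseteq \Gamma_\varepsilon(x,y)$, which is finite of size $\le N(\varepsilon)$. Since $g$ has infinite order, $\{g^n : n\in\ZZ\}$ is infinite — a contradiction. Hence $\operatorname{diam}\operatorname{Fix}_K(g) \le R(\varepsilon)$, so $\operatorname{Fix}_K(g)$ is bounded; equivalently, if $\operatorname{Fix}_K(g)$ is unbounded then $g$ has finite order.

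I would be slightly careful about one degenerate case: if $\operatorname{Fix}_K(g)$ is nonempty but its diameter never exceeds $R(\varepsilon)$ the argument above simply gives boundedness directly, and if $\operatorname{Fix}_K(g)$ is empty there is nothing to prove; the only substantive case is the existence of a far-apart pair, which is exactly what unboundedness provides. I should also double-check that $\delta$-hyperbolicity is actually needed here — in this bare form it is not, the argument is purely about acylindricity of the action on a metric space; but I will keep the hypothesis as stated since the ambient space in the application is $\mathcal{C}_{AL}(\Gamma)$, which is $\delta$-hyperbolic anyway, and a reader may expect to see $\delta$ appear, so I will phrase the chosen $\varepsilon$ with a harmless $\delta$-dependence.

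The main obstacle, such as it is, is purely bookkeeping: pinning down which $\varepsilon$ to feed into the acylindricity function so that "$d(x,g^nx)\le K$" genuinely implies "$d(x,g^nx)<\varepsilon$" uniformly in $n$, and making sure the resulting bound $R(\varepsilon)$ is independent of $g$ (it is, since $R$ depends only on the action, not on the isometry chosen). There is no deep step here — the content is entirely in the statement of acylindricity, and this lemma is essentially its immediate reformulation via the pigeonhole principle applied to the infinite cyclic group $\langle g\rangle$. Accordingly I would keep the write-up to a few lines.
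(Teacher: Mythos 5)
Your argument is correct and is essentially identical to the paper's proof: both take two points of $\operatorname{Fix}_K(g)$ at distance greater than the acylindricity constant $R$ and observe that all powers of $g$ then lie in the finite set $\Gamma_\varepsilon(x,y)$, forcing $g$ to have finite order. Your extra care with the choice of $\varepsilon$ (to pass from $\le K$ to a strict inequality) and the remark that $\delta$-hyperbolicity is not actually used are harmless refinements of the same argument.
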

\begin{proof}
    Let $x,y\in \operatorname{Fix}_{K}(g)$ be two points of $X$ such that $d(x,y)$ is greater than the constant $R(K)$ from the definition of acylindrical hyperbolicity of a group (Theorem \ref{acyartin}). Then, the set $\Gamma_K(x,y)$ is finite and contains any power of $g$. 
    
\end{proof}

\section{The topological monodromy for strata of translation surfaces}

In this section we define the topological monodromy $\rho_\mathcal{C}:\pi_1^{orb}(\mathcal{C})\rightarrow\operatorname{Mod}_{g,n}$ of the connected components $\mathcal{C}$ of strata of translation surfaces. The relation between the connected components $\mathcal{H}^{nh}(4)$ and $\mathcal{H}(3,1)$ and the Artin groups of type $E_6$ and $E_7$ is stated at the end of this section.

\subsection{Translation surfaces as polygons}

As mentioned in the introduction, a translation surface on $\Sigma_g$ is defined by a genus $g$ Riemann surface $X$ and a holomorphic non-zero global section $\omega$ of the cotangent bundle of $X$, called \textit{abelian differential}. Each $\omega$ has $2g-2$ zeros on $X$ counted with multiplicity \cite[Theorem 1.2]{Wright2015}.

By means of a developing map, an isomorphic class of translation structure on $\Sigma_g$ is equivalently an equivalence class of polygons on the complex plane. The sides of the polygon are identified in pairs via translations, such that the quotient space is $\Sigma_g$. Two such polygons define isomorphic translation structures if one can be obtained from the other by a \textit{scissor move}, as shown in Figure (\ref{scissor}). This operation is performed by cutting one of the two polygons along a straight segment joining two vertices and gluing back the two cut pieces along identified sides via a translation. 

In Section $6$ we are going to use this alternative definition of translation surface to describe a generating set for the orbifold fundamental group $\pi_1^{orb}(\mathbb{P}\mathcal{H}^{nh}(4))$.

\begin{figure}[h]
    \centering
    \includegraphics[scale=0.2]{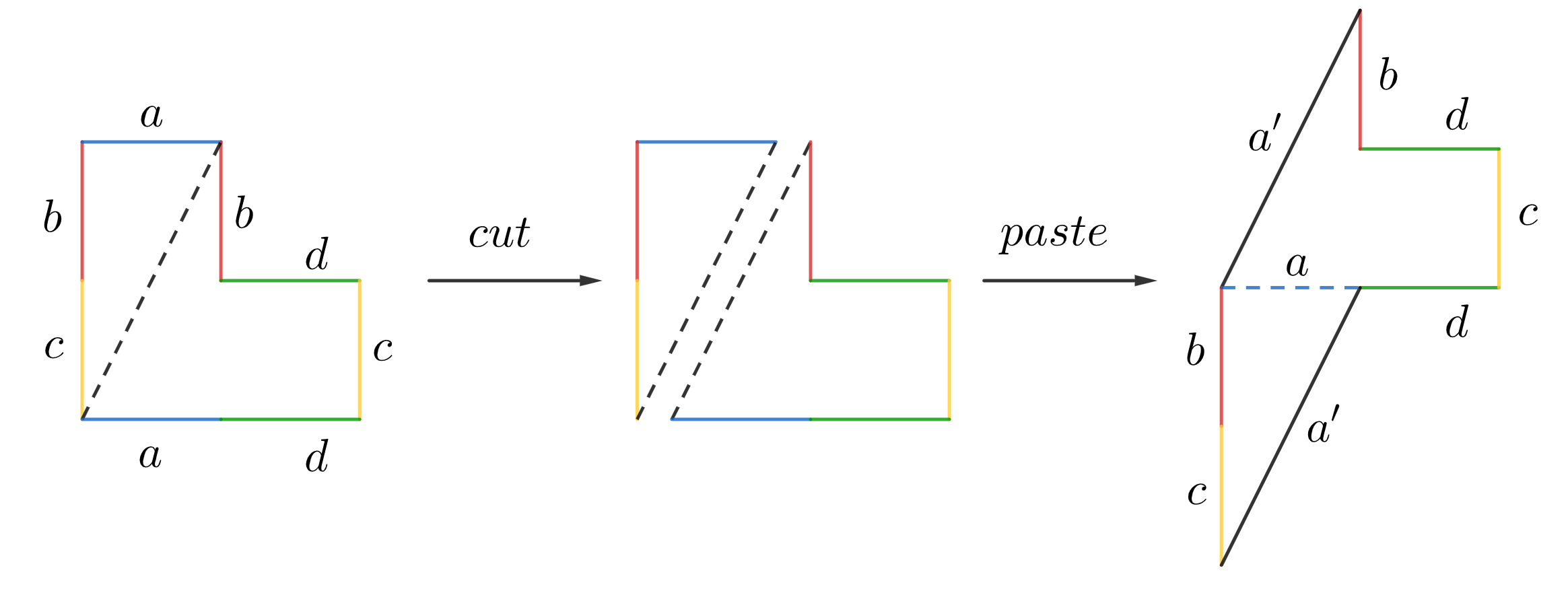}
    \caption{A scissor move for a translation surface of genus $2$. Sides with the same label are identified by a translation.}
    \label{scissor}
\end{figure}

\subsection{The strata of marked abelian differentials} 

Let $P$ be a finite set of points on $\Sigma_g$. If $(X,\omega)$ is a genus $g$ translation surface with $\mathcal{Z}(\omega)$ as set of zeros for $\omega$, a marking $f:(\Sigma_g,P)\rightarrow(X,\mathcal{Z}(\omega))$ is the isotopy class rel $P$ of a diffeomorphisms. The marked stratum $\mathcal{T}\mathcal{H}(\underline{k})$ is the set of triples $(X,f,\omega)$ where $(X,\omega)\in\mathcal{H}(\underline{k})$ and $f$ is a marking of $(X,\omega)$.

The topology of each stratum $\mathcal{H}(\underline{k})$ is inherited by its cover $\mathcal{T}\mathcal{H}(\underline{k})$.  Indeed, every \textit{marked stratum} $\mathcal{T}\mathcal{H}(\underline{k})$ is equipped with an atlas of charts in $\CC^{2g+n-1}$. Let $\tau$ be a triangulation of $\Sigma_g$ where the vertices are points in $P$. The set $U_\tau$ of triples $(X,f,\omega)\in\mathcal{T}\mathcal{H}(\underline{k})$ such that $f(\tau)$ is a triangulation of $(X,\omega)$ via saddle connections, namely geodesic arcs intersecting the zeros of $\omega$ only at the endpoints. If $\{\gamma_1,\dots,\gamma_{2g+n-1}\}$ is a fixed basis for the relative homology group $H_1(\Sigma_g,P,\ZZ)$, the charts are given by the maps
\begin{align*}
    U_\tau&\rightarrow H^1(\Sigma_g, P,\CC)\\
    (X,f,\omega)&\mapsto (\gamma_i\mapsto\int_{f_*\gamma_i}\omega)_{i=1}^{2g-n+1}
\end{align*}
\cite[Proposition 2.1]{BainbridgeSmillieWeissHorocycle2022}. 

The mapping class group $\operatorname{Mod}_g$ acts on the marked strata by precomposition on the markings and the resulting quotient space gives the quotient topology to the stratum $\mathcal{H}(\underline{k})$. However, the action of the mapping class group $\operatorname{Mod}_g$ is not free on the marked strata, but the point-stabilizers are finite groups \cite[Section 12.1]{farb2011primer}. In particular, the strata of translation surfaces are orbifolds.

In general, each $\mathcal{H}(\underline{k})$ is not connected and its number of connected components is at most $3$ \cite[Theorem 1]{Kontsevich2003}. The strata $\mathcal{H}(2g-2)$ and $\mathcal{H}(g-1,g-1)$ both
have a hyperelliptic connected component, which is isomorphic to quotients of configuration spaces of points on the Riemann sphere by the action of the group of some roots of unity \cite[Theorem 2.3]{CalderonConnected2020}. On the other hand, studying the topology of the non-hyperelliptic components proves to be more intricate.

\subsection{The topological monodromy map}

If $M$ is a connected manifold and $G$ acts smoothly and properly discontinuously on $M$, the quotient space $M/G$ is a (good) orbifold. The\textit{ orbifold fundamental group} $\pi_1^{orb}(M/G,p)$ based at $p\in M$ is the group of pairs $(\eta,g)$, where $g\in G$ and $\eta$ is a homotopy class of arcs with endpoints $p$ and $g\cdot p$. The group operation on $\pi_1^{orb}(M/G,p)$ is given by the composition law $(\eta_1,g_1)(\eta_1,g_1)=(\eta_1*(g_1\cdot\eta_2),g_1g_2)$.

Let $(X,\omega)$ be a translation surface in some connected components $\mathcal{C}$ of a stratum $\mathcal{H}(\underline{k})$ and let us fix a marked translation surface $(X,f,\omega)\in\mathcal{T}\mathcal{H}(\underline{k})$. If $\mathcal{M}_{g,n}$ is the moduli space of genus $g$ Riemann surfaces with $n$ marked points, the forgetful map $\mathcal{C}\rightarrow\mathcal{M}_{g,n}$ induces a homomorphism between orbifold fundamental groups
$$\rho_\mathcal{C}:\pi_1^{orb}(\mathcal{C},(X,f,\omega))\rightarrow\pi_1^{orb}(\mathcal{M}_{g,n},X),$$

where $\pi_1^{orb}(\mathcal{M}_g,X)$ is just $\operatorname{Mod}_{g,n}$ \cite[Section 12.5.3]{farb2011primer}. Geometrically, the homomorphism $\rho_C$ keeps track of the change of marking performed along a loop $(\eta,g)$. The translation structure carried along $\eta$ coincides at the endpoints of the path in $\mathcal{T}\mathcal{H}(\underline{k})$ but the marking might change. Indeed, every path $\eta$ in $\mathcal{T}\mathcal{H}(\underline{k})$ with endpoints $(X,f,\omega)$ and $(X,f',\omega)$ is mapped to the mapping class $f^{-1}f'$ by $\rho_C$.

Calderon-Salter proved that in genus $g\geq 5$ and for every non-hyperelliptic component $\mathcal{C}$, the image of $\rho_\mathcal{C}$ in $\operatorname{Mod}_{g,n}$ is a \textit{framed mapping class group} \cite[Theorem A]{CalderonSalterFramed2022}. The framed mapping class groups are stabilizers of winding number functions defined on the marked translation surface $(X,f,\omega)$ that serves as a base point for the monodromy $\rho_\mathcal{C}$. Calderon-Salter result does not cover the case of the strata $\mathcal{H}^{nh}(4)$ and $\mathcal{H}(3,1)$. However, in this article the focus is on the kernels of the monodromy maps and not on the images. 

\subsection{Projective strata of abelian differentials}

Any non-zero complex number is a composition of a rotation and a homothety that acts on the strata of translation surfaces by rotating and dilating the defining polygons. The $\CC^*$-action on the strata $\mathcal{H}(\underline{k})$ is continuous and preserves $\mathcal{Z}(\omega)$ pointwise for every $(X,\omega)\in\mathcal{H}(\underline{k})$. A \textit{projective stratum} $\mathbb{P}\mathcal{H}(\underline{k})$ is the quotient of $\mathcal{H}(\underline{k})$ by the action of $\CC^*$. Equivalently, every projective stratum $\mathbb{P}\mathcal{H}(\underline{k})$ parameterizes pairs $(X,D)$ where $X$ is a smooth projective curve and $D$ is a canonical positive divisor with prescribed multiplicities given by representative abelian differentials \cite{Looijenga2014}.
 
In particular, for any connected component $\mathcal{C}$ of the stratum $\mathcal{H}(\underline{k})$, the quotient map $q:\mathcal{C}\rightarrow\mathbb{P}\mathcal{C}$ induces a homomorphism between orbifold fundamental groups $q_*:\pi_1^{orb}(\mathcal{C})\rightarrow\pi_1^{orb}(\mathbb{P}\mathcal{C})$. The topological monodromy map $\rho_{\mathbb{P}\mathcal{C}}:\pi_1^{orb}(\mathbb{P}\mathcal{C})\rightarrow\operatorname{Mod}_g^n$ can then be also defined for $\mathbb{P}\mathcal{C}$ as for $\mathcal{C}$. The monodromies $\rho_\mathcal{C}$ and $\rho_{\mathbb{P}\mathcal{C}}$ fit inside the commutative diagram below

\[
    \begin{tikzcd}[row sep=2em]
\pi_1^{orb}(\mathcal{C}) \arrow[rr, "q_*"] \arrow[dr, "\rho_C"']& & \pi_1^{orb}(\mathbb{P}\mathcal{C})\arrow[ld, "\rho_{{\mathbb{P}\mathcal{C}}}"]\\
 & \operatorname{Mod}_g^n.
\end{tikzcd}
    \]

For the stratum components in question, Looijenga-Mondello proved the following \cite{Looijenga2014}.

\begin{theorem}
\label{lomo}
    The orbifold fundamental groups of the projective connected components $\mathbb{P}\mathcal{H}^{nh}(4)$ and $\mathbb{P}\mathcal{H}(3,1)$ are isomorphic to $A(E_6)_\Delta$ and $A(E_7)_\Delta$, respectively.
\end{theorem}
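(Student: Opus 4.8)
The plan is to cite Looijenga--Mondello's identification of the orbifold fundamental groups of these projective strata with the inner automorphism groups of the relevant spherical-type Artin groups \cite{Looijenga2014}, and then to match those inner automorphism groups with $A(E_6)_\Delta$ and $A(E_7)_\Delta$ respectively. The first ingredient is a classical algebro-geometric fact: the projective strata $\mathbb{P}\mathcal{H}^{nh}(4)$ and $\mathbb{P}\mathcal{H}(3,1)$ in genus $3$ are orbifold-isomorphic to (open subsets of) versal deformation spaces of the plane curve singularities of type $E_6$ and $E_7$. Concretely, a non-hyperelliptic genus $3$ curve is a smooth plane quartic, its canonical divisor is cut out by lines, and prescribing a zero of order $4$ (respectively zeros of orders $3$ and $1$) of a holomorphic differential amounts to prescribing the contact order of a line with the quartic; the corresponding families are governed by the simple singularities $E_6$ and $E_7$ by the classification of Arnol'd. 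Looijenga--Mondello compute the orbifold fundamental group of the complement of the discriminant in such a deformation space to be the inner automorphism group of the Artin group attached to the singularity's Dynkin diagram, via the Brieskorn--Deligne theory of the $K(\pi,1)$ property for discriminant complements of finite Coxeter groups.

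The key steps, in order, would be: (1) recall the isomorphism of $\operatorname{Inn}(A(\Gamma))$ with $A(\Gamma)/Z(A(\Gamma)) = A(\Gamma)_\Delta$, which holds because $A(\Gamma)$ has trivial center modulo the cyclic group generated by a power of $\Delta$ by Brieskorn \cite[Théorème 7.1]{BrieskornArtin1972} — so $\operatorname{Inn}(A(\Gamma)) \cong A(\Gamma)_\Delta$ whenever $A(\Gamma)$ has infinite cyclic center, which is the case for $A(E_6)$ and $A(E_7)$; (2) invoke Looijenga--Mondello's theorem identifying $\pi_1^{orb}$ of the two projective strata with $\operatorname{Inn}(A(E_6))$ and $\operatorname{Inn}(A(E_7))$; and (3) combine (1) and (2) to conclude. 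Since the statement as phrased is essentially a restatement of the Looijenga--Mondello result after this cosmetic rewriting, the proof is short: the substantive content is entirely in the cited paper.

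The main obstacle is not in the logical structure but in correctly setting up the dictionary between the flat-geometric description of the strata (abelian differentials with prescribed zero orders) and the algebro-geometric description (deformations of plane curve singularities), and in checking that the non-hyperelliptic hypothesis is exactly what makes the plane-quartic model available — the hyperelliptic locus in genus $3$ corresponds to a different (hyperelliptic) component of $\mathcal{H}(4)$, which is why only $\mathcal{H}^{nh}(4)$ appears. One must also be careful that the $\CC^*$-quotient on the stratum side corresponds precisely to passing from the differential to the divisor it defines, so that the projective stratum — not the full stratum — is what carries the Artin group (the full stratum carries a central $\ZZ$-extension, which is the content of the infinite-cyclic central extension statement mentioned in the introduction). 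Once this dictionary is in place, the identification of the fundamental group with $A(\Gamma)_\Delta$ is immediate from the cited results, so I would present this theorem essentially as a direct consequence of \cite{Looijenga2014} together with the elementary observation $\operatorname{Inn}(A(\Gamma)) \cong A(\Gamma)_\Delta$.
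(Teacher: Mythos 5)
Your proposal is correct and matches the paper exactly: the paper gives no proof of Theorem \ref{lomo}, presenting it as a direct citation of Looijenga--Mondello \cite{Looijenga2014}, with the identification $\operatorname{Inn}(A(\Gamma))\cong A(\Gamma)/Z(A(\Gamma))=A(\Gamma)_\Delta$ (via the infinite cyclic center generated by a power of $\Delta$ \cite{BrieskornArtin1972}) left implicit in the notation. Your additional discussion of the versal-deformation dictionary is accurate background but not part of any argument the paper itself supplies.
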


\section{The geometric monodromy maps of $\mathbb{P}\mathcal{H}^{nh}(4)$ and $\mathbb{P}\mathcal{H}(3,1)$}

In this section we prove Theorem \ref{ker} and \ref{main}. In particular, we study the topological monodromy homomorphisms of the projective strata $\mathbb{P}\mathcal{H}^{nh}(4)$ and $\mathbb{P}\mathcal{H}(3,1)$. In Theorem \ref{ham} we recall the definition of some standard generators of $\pi_1^{orb}(\mathbb{P}\mathcal{H}^{nh}(4))$ that map via the homomorphisms $$\rho_{\mathbb{P}\mathcal{H}^{nh}(4)}:\pi_1^{orb}(\mathbb{P}\mathcal{H}^{nh}(4))\rightarrow\operatorname{Mod}_{3,1}$$ to Dehn twists. Discussions related to Theorem \ref{ham} are subject of an ongoing work by Calderon-Cuadrado-Salter; for instance, see \cite{Cuadrado2021}. The main ideas that underline Theorem \ref{ham} come from the theory of versal deformation spaces for plane curve singularities \cite{arnold}. For the sake of completeness, we are going to include a similar description of $\rho_{\mathbb{P}\mathcal{H}(3,1)}$ closely following the work of Calderon-Cuadrado-Salter. In particular, we prove that the monodromy $$\rho_{\mathbb{P}\mathcal{H}(3,1)}:\pi_1^{orb}(\mathbb{P}\mathcal{H}(3,1))\rightarrow\operatorname{Mod}_{3,2}$$ is geometric. However, the result of Calderon-Cuadrado-Salter gives explicit generators for $\pi_1^{orb}(\mathbb{P}\mathcal{H}^{nh}(4))$. This finite set of generators arises from the algebro-geometric theory of versal deformation spaces and can be described as a finite set of cylinder shears; see Figure \ref{shear} and \ref{e6trsu}. On the other hand, Theorem \ref{main} is enough to prove the existence of a non-abelian free group of rank $2$ in the kernel of $\rho_{\mathbb{P}\mathcal{H}(3,1)}$.

\begin{proof}[Proof of Theorem \ref{ker}]
    The copy of the non-abelian free group of rank $2$ we constructed in Theorem \ref{thmb} is in the kernel of any geometric homomorphism $A(E_6)\rightarrow\operatorname{Mod}_{3,1}$, but it is also a non-abelian free group in the kernel of the geometric map $A(E_6)_\Delta\rightarrow\operatorname{Mod}_{3,1}$, which is the same as $\rho_{\mathbb{P}\mathcal{H}^{nh}(4)}$. Similarly, the same copy of the non-abelian free group of rank $2$ can be defined in the kernel of the geometric homomorphism $A(E_7)_\Delta\rightarrow\operatorname{Mod}_{3,2}$, which is the same as $\rho_{\mathbb{P}\mathcal{H}(3,1)}$.
    
\end{proof}

\textbf{The monodromy of the stratum $\bm{\mathbb{P}\mathcal{H}^{nh}(4)}$.}  A \textit{cylinder} $\xi$ on a translation surface is an isometric embedding of a Euclidean cylinder whose boundary is a union of saddle connections. In particular, the interior of $\xi$ does not contain any singular point.

If the embedded cylinder $\xi$ is isometric to $(\mathbb{R}/a\mathbb{Z})\times[0,b]$ for some $a,b\in\RR^+$, the core curve of $\xi$ on the translation surface $(X,\omega)$ is the isotopic class of the simple closed curve which is the image of $(\mathbb{R}/a\mathbb{Z})\times\{t\}$ in $(X,\omega)$ for some $t\in(0,b)$. 

Suppose $\xi$ is a horizontal cylinder on a translation surface $(X,\omega)$. In particular, the cylinder $\xi$ can be represented as a rectangle $[0,b]\times[0,a]$ embedded in a defining polygon of $(X,\omega)$ with a pair of sides identified. Suppose that the ratio between its height $a$ and its weight $b$ is $R$. If $t\in[0,R]$, a \textit{cylinder shear} along $\xi$ is the result of the action by the matrix 
\begin{align*}
S_t=
    \begin{bmatrix}
    1& t\\
    0& 1
    \end{bmatrix}
\end{align*}
on the embedded parallelogram of the polygon representative. Analogously, by taking a suitable conjugate of $S_t$ one can define a cylinder shear along non-horizontal cylinders.

\begin{figure}[h]
    \centering
    \includegraphics[scale=0.25]{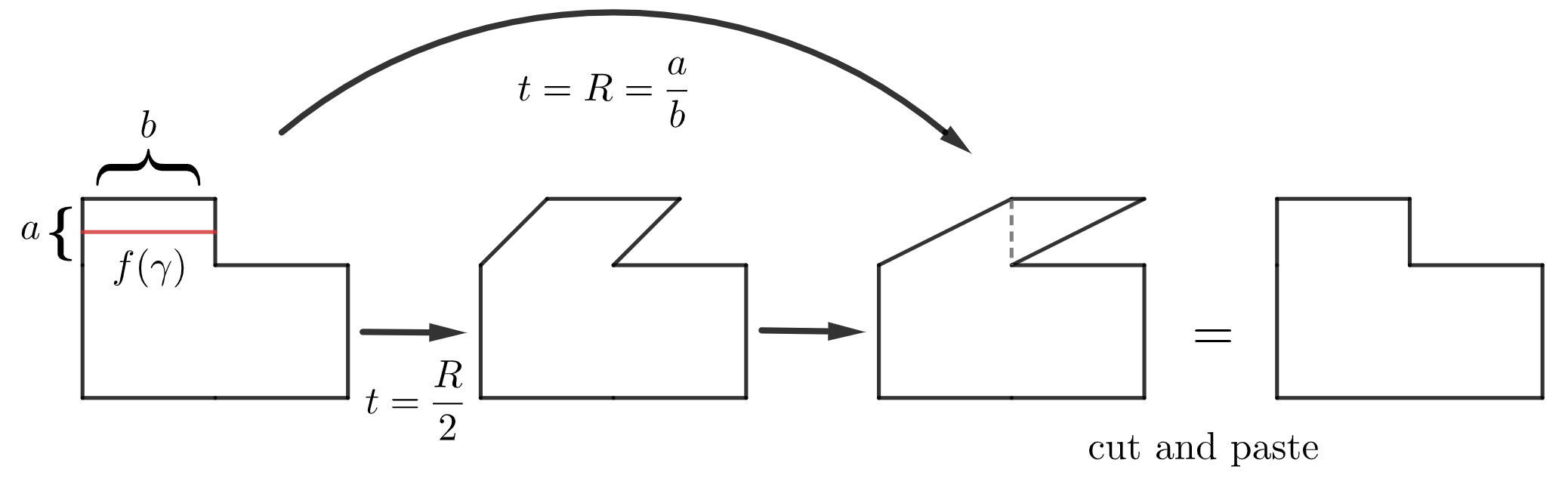}
    \caption{A full cylinder shear action on an L-shaped translation surface of genus 2, where opposite sides of the polygon are identified via a translation.}
    \label{shear}
\end{figure}

Let now $f:\Sigma_g\rightarrow X$ be a marking of $(X,\omega)$. The full shear $S_R$ acts on $(X,f,\omega)$ preserving the translation structure of $X$, as the resulting polygon differs from the initial one by a scissor move, as in Figure (\ref{shear}). However, the matrix $S_R$ changes the marking $f$ by a Dehn twist along the core curve of the cylinder $\xi$. Hence, a cylinder shear is an orbifold loop, and it is mapped via the topological monodromy map of the connected component containing $(X,\omega)$ to a Dehn twist.

The following result is known by experts. It appears as a consequence of Henry Pinkham's thesis \cite{Pinkham} and can also be found in \cite[Proposition 6.2]{ham}. 

\begin{theorem}
\label{ham}
    Let $\{\xi_1,\dots,\xi_6\}$ be a collection of embedded cylinders of a translation surface $(X,\omega)\in\mathcal{H}^{nh}(4)$ such that the family of the associated core curves have an $E_6$-type intersection graph. Then, there exists a map $\Theta:A(E_6)\rightarrow \pi_1^{orb}(\mathbb{P}\mathcal{H}^{hn}(4))$ that associates each standard generator of $A(E_6)$ to a full cylinder shear and can be extended to a well-defined surjective homomorphism with kernel the center of $A(E_6)$. 
\end{theorem}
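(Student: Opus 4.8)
The plan is to exhibit the map $\Theta$ concretely and then verify it is a well-defined surjective homomorphism with the stated kernel, leveraging Theorem \ref{lomo}, which already identifies $\pi_1^{orb}(\mathbb{P}\mathcal{H}^{nh}(4))$ with $A(E_6)_\Delta$. First I would fix a polygon representative of a chosen $(X,\omega)\in\mathcal{H}^{nh}(4)$ together with a marking $f:\Sigma_3\to X$, and arrange the six embedded cylinders $\xi_1,\dots,\xi_6$ so that their core curves $\gamma_1,\dots,\gamma_6$ realize the $E_6$ intersection graph: $\gamma_i,\gamma_j$ disjoint when $a_i,a_j$ are non-adjacent in $E_6$, and $i(\gamma_i,\gamma_j)=1$ when adjacent. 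The existence of such a configuration is exactly the input hypothesis; the relevant picture is the one underlying Figures \ref{shear} and \ref{e6trsu}. As explained in the paragraph preceding the statement, each full cylinder shear $S_{R_i}$ along $\xi_i$ is an orbifold loop in $\mathbb{P}\mathcal{H}^{nh}(4)$ (the polygon returns to itself via a scissor move) whose monodromy image is the Dehn twist $T_{\gamma_i}$. Define $\Theta(a_i)$ to be the orbifold homotopy class of this shear loop.

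The key step is then to check that $\Theta$ respects the Artin relations, i.e.\ descends to $A(E_6)$. For a pair of non-adjacent generators, the cylinders $\xi_i,\xi_j$ can be taken disjoint, so the two shear loops are supported in disjoint subsurfaces and their classes commute in $\pi_1^{orb}(\mathbb{P}\mathcal{H}^{nh}(4))$; this can be seen either directly at the level of polygons or by pushing the statement down to $\operatorname{Mod}_{3,1}$ combined with the injectivity information supplied by Theorem \ref{lomo}. For an adjacent pair, I would verify the braid relation $\Theta(a_i)\Theta(a_j)\Theta(a_i)=\Theta(a_j)\Theta(a_i)\Theta(a_j)$; the cleanest route is to observe that $\pi_1^{orb}(\mathbb{P}\mathcal{H}^{nh}(4))\cong A(E_6)_\Delta$ is generated by the shear classes $\Theta(a_1),\dots,\Theta(a_6)$ (since the $\rho$-images $T_{\gamma_i}$ generate the relevant geometric subgroup, and the central kernel is generated by a shear relation), so $\Theta$ is at least surjective onto $A(E_6)_\Delta$; then a counting/presentation argument — the standard presentation of $A(E_6)_\Delta$ as $A(E_6)$ modulo the cyclic center — forces the Artin relations to hold among the generators $\Theta(a_i)$, because any surjection $A(E_6)\twoheadrightarrow A(E_6)_\Delta$ sending $a_i\mapsto\Theta(a_i)$ must factor through these relations. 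This gives the well-defined homomorphism $\Theta:A(E_6)\to\pi_1^{orb}(\mathbb{P}\mathcal{H}^{nh}(4))$, surjective by the generation remark.

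It remains to identify $\ker\Theta$ with the center of $A(E_6)$. Since $\pi_1^{orb}(\mathbb{P}\mathcal{H}^{nh}(4))\cong A(E_6)/Z(A(E_6))$ by Theorem \ref{lomo} and $\Theta$ is a surjection from $A(E_6)$ carrying standard generators to the images of the standard generators under the quotient $A(E_6)\to A(E_6)_\Delta$, the composite $A(E_6)\xrightarrow{\Theta}\pi_1^{orb}(\mathbb{P}\mathcal{H}^{nh}(4))\cong A(E_6)_\Delta$ is a surjective endomorphism-type map of $A(E_6)$ onto $A(E_6)_\Delta$ fixing the generating set, hence equals the canonical projection; therefore $\ker\Theta=Z(A(E_6))=\langle\Delta^k\rangle$ for the appropriate $k$. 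Concretely, the central element is realized geometrically by the product of shears around a suitable combination of cylinders corresponding to $\Delta$, which produces a multitwist that, as an element of $\pi_1^{orb}$, is null-homotopic because the $\CC^*$-action has already been quotiented out — this is the geometric shadow of passing from $A(E_6)$ to $A(E_6)_\Delta$. The main obstacle I anticipate is the verification of the braid relation directly on polygons: keeping careful track of how two overlapping cylinder shears compose as orbifold loops (as opposed to their easier images in $\operatorname{Mod}_{3,1}$) is delicate, which is why I would instead lean on Theorem \ref{lomo} and a presentation argument to bypass that computation, using Pinkham's versal-deformation description of the monodromy (and \cite[Proposition 6.2]{ham}) only to guarantee that the shear classes are the standard Artin generators up to the known identification.
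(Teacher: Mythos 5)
The paper does not actually prove Theorem \ref{ham}: it cites it as a known consequence of Pinkham's thesis \cite{Pinkham} and of \cite[Proposition 6.2]{ham}, and only adds the remark that adjacent (resp.\ commuting) generators go to shears along once-intersecting (resp.\ disjoint) cylinders. The substance of the result lives in the identification of $\mathbb{P}\mathcal{H}^{nh}(4)$ with the projectivized discriminant complement of the versal deformation of the $E_6$ plane curve singularity, where $\pi_1$ is $A(E_6)$ with standard generators given by meridians of the discriminant, and these meridians are realized geometrically as full cylinder shears. Your proposal tries to bypass that input, and this is where it breaks.

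The central gap is a circularity in your verification of the Artin relations. Theorem \ref{lomo} gives only an \emph{abstract} isomorphism $\pi_1^{orb}(\mathbb{P}\mathcal{H}^{nh}(4))\cong A(E_6)_\Delta$; it does not tell you that the shear classes $\Theta(a_i)$ correspond to the standard generators under any such isomorphism. Your argument that ``any surjection $A(E_6)\twoheadrightarrow A(E_6)_\Delta$ sending $a_i\mapsto\Theta(a_i)$ must factor through these relations'' presupposes that the assignment $a_i\mapsto\Theta(a_i)$ extends to a homomorphism, which is exactly what you are trying to prove: a six-element generating set of $A(E_6)_\Delta$ need not satisfy the braid and commutation relations, so knowing that the target is isomorphic to $A(E_6)_\Delta$ and that the $\Theta(a_i)$ generate it proves nothing about the relations they satisfy. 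Worse, the auxiliary claims also lean on the monodromy $\rho$: you infer generation upstairs from the fact that the Dehn twists $T_{\gamma_i}$ generate the image downstairs, and you infer commutation of non-adjacent shears ``by pushing the statement down to $\operatorname{Mod}_{3,1}$.'' Since $\ker\rho$ is enormous --- that is the whole point of the paper --- neither generation nor relations can be pulled back from $\operatorname{Mod}_{3,1}$ to $\pi_1^{orb}(\mathbb{P}\mathcal{H}^{nh}(4))$. The same issue undermines the kernel computation at the end: identifying $\Theta$ with the canonical projection $A(E_6)\to A(E_6)_\Delta$ requires knowing that the Looijenga--Mondello isomorphism carries shear classes to classes of standard generators, which is precisely the content of Pinkham's theorem. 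To repair the proof you would need to carry out (or cite) the versal-deformation identification, under which the braid and commutation relations hold upstairs because they hold in $\pi_1$ of the discriminant complement by Brieskorn--Saito and Deligne, not because of anything visible in the mapping class group.
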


\begin{figure}[h]
    \centering
    \includegraphics[scale=0.2]{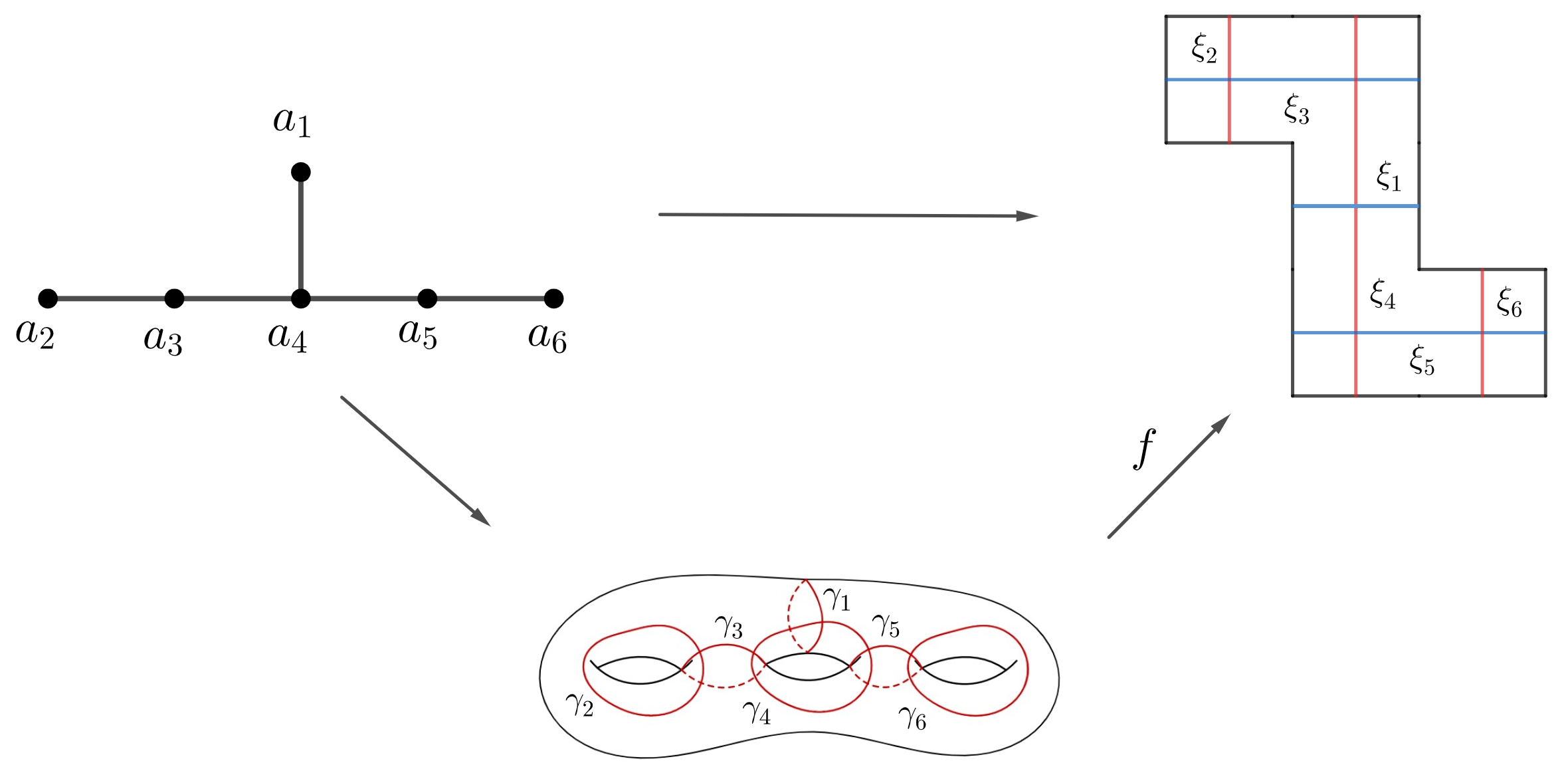}
    \caption{An $S$-shaped translation surface in $\mathcal{H}^{nh}(4)$. The red and blue segments represent the core curves of vertical and horizontal cylinders $\xi_i$, respectively. Their intersection graph is $E_6$.}
    \label{e6trsu}
\end{figure}

The homomorphism $\Theta$ is well-defined.  Every pair of adjacent standard generators in $A(E_6)$ is mapped to cylinder shears along embedded cylinders with core curves intersecting once; every pair of standard generators that commute is mapped to cylinder shears along disjoint flat cylinders.

Theorem \ref{ham} shows that $\pi_1^{orb}(\mathbb{P}\mathcal{H}^{nh}(4))$ is generated by a finite family of cylinder shears. However, the group $\pi_1^{orb}(\mathcal{H}^{nh}(4))$ contains orbifold loops that cannot be generated by cylinder shears only. These are loops that cyclically permute the prongs around the singularity. Calderon-Salter \cite[Corollary 7.6]{CalderonSalterFramed2022} showed that there exists an epimorphism $\pi_1^{orb}(\mathcal{H}^{nh}(4))\twoheadrightarrow\ZZ_2$ with kernel containing those orbifold loops that do not permute the prongs at the singularity. In particular, cylinder shears cannot cyclically permute any prong configuration. 

\textbf{The monodromy of the stratum $\bm{\mathbb{P}\mathcal{H}(3,1)}$.} 
Every genus $3$ non-hyperelliptic Riemann surface $X$ can be embedded in $\mathbb{C}\mathbb{P}^2$ as the vanishing locus of a smooth plane quartic \cite[Chapter VII, Proposition 2.5]{Miranda}. Such embedding is defined as the unique projective embedding of $X$ in $\mathbb{C}\mathbb{P}^2$ corresponding to the linear system of positive canonical divisors on $X$, up to linear change of coordinates. By abuse of notations, we identify every genus $3$ non-hyperelliptic Riemann surface $X$ with its image in $\mathbb{C}\mathbb{P}^2$. 

A \textit{flex} of a smooth quartic $X$ is a point $p\in X$ where the intersection multiplicity of $X$ with its tangent space is exactly $3$. A plane quartic $X$ with a flex point $p$ can always be reparametrized in such a way that $p$ is the point at infinity $[0:0:1]$ and its vanishing polynomial is of the form 
$$Q_s=x^3z+y^3x+s_1xyz^2+s_2xz^3+s_3y^4+s_4y^3z+s_5y^2z^2+s_6yz^3+s_7z^4\in\CC[x,y,z],$$
for some $s=(s_1,\dots,s_7)\in\CC^7$ \cite[Proposition 1]{Shioda1993}.

However, there are some strata $\mathcal{H}(k_1,\dots,k_n)$ where all the underlying Riemann surfaces are non-hyperelliptic. This is the case if all the odd numbers in the partition $(k_1,\dots,k_n)$ appear an odd number of times, since every positive canonical divisor on a hyperelliptic Riemann surface is the pullback of a divisor on the Riemann sphere $\mathbb{CP}^1$ \cite[Chapter IV, Proposition 5.3]{hartshorne}. In particular, the Riemann surfaces in the stratum $\mathcal{H}(3,1)$ are all non-hyperelliptic. 

\begin{proposition}

\label{flex}
Let $(X,\omega)$ be a translation surface in $\mathcal{H}(3,1)$. Then $X$ has a flex. In particular, the Riemann surfaces at each point in $\mathcal{H}(3,1)$ are vanishing loci $\mathbb{V}(Q_s)$ of quartics of the form $Q_s$, up to isomorphism.
    
\end{proposition}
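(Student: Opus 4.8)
The plan is to show that a translation surface $(X,\omega)\in\mathcal{H}(3,1)$ carries a canonical divisor with a point of multiplicity $\geq 3$, and then translate this into the statement about flexes via the canonical embedding in $\mathbb{CP}^2$. First I would recall that since $\omega$ is a nonzero holomorphic $1$-form on the genus $3$ curve $X$, its divisor $\operatorname{div}(\omega)$ is a positive canonical divisor of degree $2g-2=4$. The stratum condition $\mathcal{H}(3,1)$ means exactly that $\operatorname{div}(\omega)=3p+q$ for two distinct points $p,q\in X$. The key point is that $X$ is non-hyperelliptic — which was just established in the preceding paragraph, since the partition $(3,1)$ has the odd number $1$ appearing once and the odd number $3$ appearing once, both odd counts — so the canonical map $\kappa\colon X\to\mathbb{CP}^2$ is an embedding onto a smooth plane quartic, and under this embedding the canonical divisor $3p+q$ is cut out on $X$ by a line $L\subset\mathbb{CP}^2$, namely $\kappa^{-1}(L)=3p+q$ as divisors.

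The heart of the argument is the standard dictionary between hyperplane sections of a canonically embedded curve and canonical divisors: for a smooth plane quartic $C=\kappa(X)$, intersecting $C$ with a line $L$ gives a degree-$4$ effective divisor, and every positive canonical divisor arises this way. Hence the divisor $3p+q$ corresponds to a line $L$ meeting $C$ at $\kappa(p)$ with intersection multiplicity $3$ and at $\kappa(q)$ with multiplicity $1$; in particular $L$ is the tangent line to $C$ at $\kappa(p)$ and $(C\cdot L)_{\kappa(p)}=3$. By definition this says $\kappa(p)$ is a flex of the quartic $C$, i.e. $X$ has a flex. Once a flex is produced, the second sentence of the proposition follows immediately from the cited normal form result \cite[Proposition 1]{Shioda1993}: any smooth plane quartic with a flex can be projectively reparametrized so that the flex is $[0:0:1]$ and the defining equation has the shape $Q_s$, so $X\cong\mathbb{V}(Q_s)$ for some $s\in\CC^7$.

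I expect the only real subtlety — and the step to state carefully rather than the step that is hard — is the precise correspondence between the order of vanishing of $\omega$ at $p$ and the contact order of the tangent line at $\kappa(p)$. Concretely, one must justify that if $\operatorname{div}(\omega)=3p+q$ then the line $L$ with $\kappa^*L=\operatorname{div}(\omega)$ is genuinely tangent to order $3$, which amounts to noting that $L$ is determined by the two-dimensional subspace of $H^0(X,\Omega_X^1)$ vanishing at $p$ (equivalently, the hyperplane through $\kappa(p)$ whose pullback is the specified canonical divisor), and then reading off that $\kappa^*L - 3\kappa(p) = q \geq 0$ forces contact order exactly $3$ at $\kappa(p)$. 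This is entirely standard Brill–Noether / canonical-curve geometry, so no genuine obstacle arises; the work is bookkeeping with divisors and making sure the non-hyperelliptic hypothesis (hence the embedding being an embedding, not a $2$-to-$1$ map) is invoked where needed.
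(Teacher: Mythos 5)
Your proposal is correct and follows exactly the paper's argument: identify $\operatorname{div}(\omega)=3p+q$ with a line section of the canonically embedded plane quartic, conclude that the line is the tangent at $p$ with contact order $3$ (so $p$ is a flex), and invoke Shioda's normal form. The paper's proof is just a terser version of the same reasoning, with the non-hyperellipticity likewise supplied by the preceding paragraph.
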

\begin{proof}
    Since $X$ is a genus $3$ projective smooth curve, the positive canonical divisors associated with an abelian differential $(X,\omega)\in\mathcal{H}(3,1)$ coincide with divisors coming from lines $L_\omega$ in $\mathbb{CP}^2$ that intersects $X$ in two points. One of these points, say $p$, has multiplicity $3$; then $L_\omega$ is necessarily the tangent line to $X$ in $p$. In particular, $X$ has a flex at $p$ and is isomorphic to the vanishing locus of a quartic $Q_s$.
    
\end{proof}

On the other hand, every smooth vanishing locus $\mathbb{V}(Q_s)$ comes with an abelian differential in $\mathcal{H}(3,1)$ as follows.

The vanishing loci $\mathbb{V}(Q_s)$ are compact Riemann surfaces and the points at infinity can be removed to get a surface diffeomorphic to $\Sigma_{3,2}$. Equivalently, we can evaluate the homogeneous polynomial $Q_s$ at $z=1$ to get a polynomial $q_s\in\CC[x,y]$ and the respective affine vanishing locus $\mathbb{V}(q_s)$ in $\CC^2$.

Since every $\mathbb{V}(q_s)$ is the zero level set of a holomorphic function, the two complex derivatives $\partial_xq_s$ and $\partial_yq_s$ satisfy $\partial_xq_sdx+\partial_yq_sdy=0$. Moreover, the derivatives $\partial_xq_s$ and $\partial_yq_s$ cannot simultaneously vanish since $ \mathbb{V}(q_s)$ is smooth. Hence, the abelian differential 
\begin{align*}
    \omega_{s}(x_0,y_0)=
\begin{cases} 
       \frac{dx}{\partial_yq_s(x_0,y_0)} & \text{ if } \partial_yq_s(x_0,y_0)\neq 0 \\
      -\frac{dy}{\partial_xq_s(x_0,y_0)} & \text{ if } \partial_xq_s(x_0,y_0)\neq 0
   \end{cases}
\end{align*}
is well-defined and non-vanishing at every point $(x_0,y_0)\in\mathbb{V}(q_s)$. In particular, the volume form $\omega_s$ can be holomorphically extended to be zero on the two points at infinity $[1:0:0]$ and $[0:1:0]$, where $\omega_s$ vanishes with multiplicity $3$ and $1$, respectively. 

Looijenga observed that a line intersecting $\mathbb{V}(Q_s)$ with multiplicity $3$ is determined solely by the parameter $s$ \cite[Introduction]{Looijenga93}. In particular, up to a rescaling factor, the abelian differential $\omega_s$ is the unique holomorphic $1$-form on $\mathbb{V}(Q_s)$ such that the pair $(\mathbb{V}(Q_s),\omega_s)$ is a translation surface in $\mathcal{H}(3,1)$. 

In what follows we are going to denote by $\mathcal{M}_3^{\operatorname{flex}}$ the moduli space of non-hyperelliptic genus $3$ Riemann surfaces with $2$ marked points given by a flex $p\in X$ and the unique point of intersection between $T_pX$ and $X$ with multiplicity $1$ (Figure \ref{flexpic}).

\begin{figure}[h]
    \centering
    \includegraphics[scale=0.1]{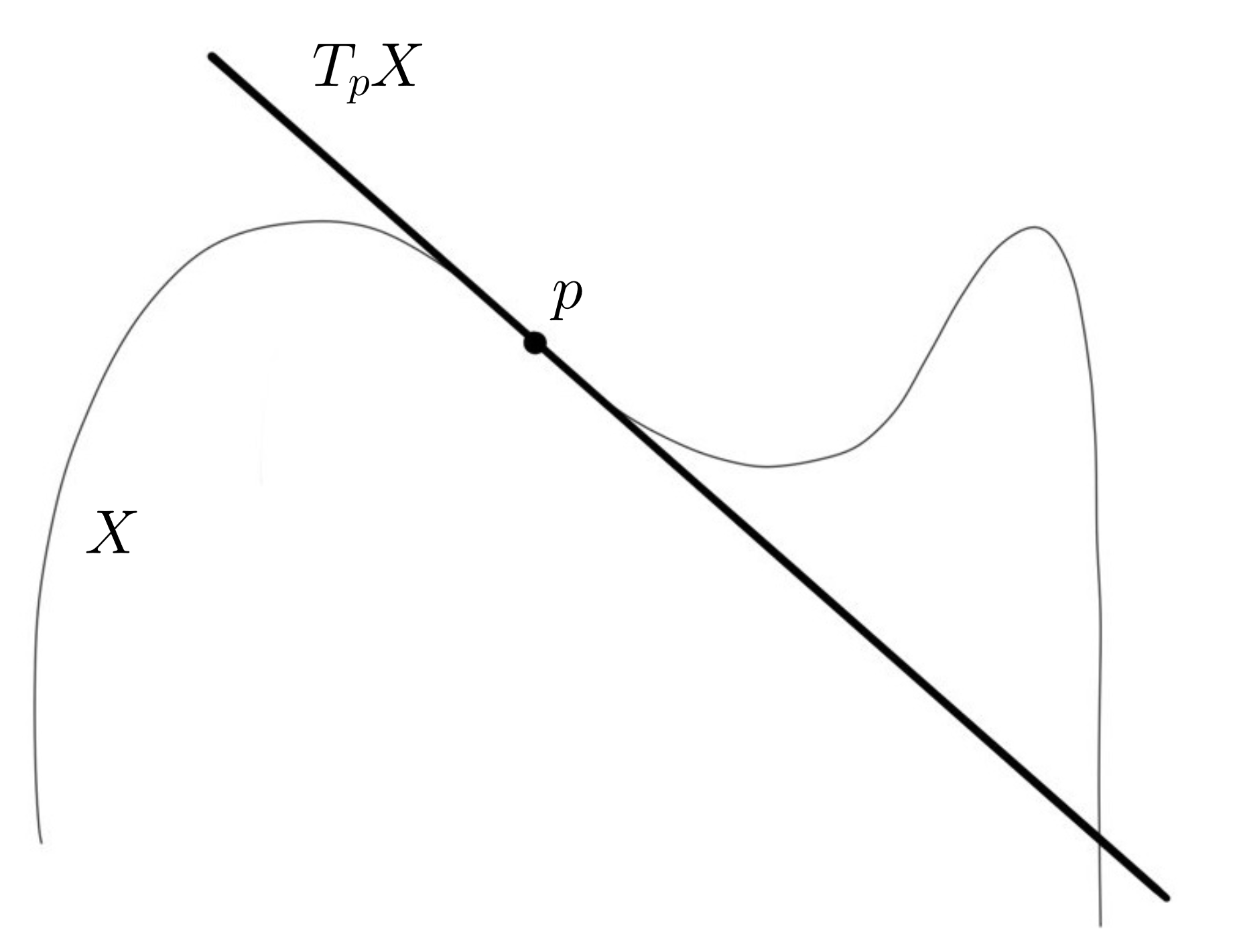}
    \caption{A quartic $X$ with a flex in $p$.}
    \label{flexpic}
\end{figure}

\begin{proposition}
\label{lo}
The forgetful map 
    \begin{align*}
        \mathbb{P}\mathcal{H}(3,1)&\rightarrow\mathcal{M}_3^{\operatorname{flex}}\\
        (X,[\omega])&\mapsto X
    \end{align*}
is an isomorphism on orbifolds. In particular, it induces an isomorphism $$\theta_1:\pi_1^{orb}(\mathbb{P}\mathcal{H}(3,1))\rightarrow\pi_1^{orb}(\mathcal{M}_3^{\operatorname{flex}})$$ that commutes with the monodromies $\rho^{\operatorname{flex}}:\pi_1^{orb}(\mathcal{M}_3^{\operatorname{flex}})\rightarrow\operatorname{Mod}_{3,2}$ and $\rho_{\mathbb{P}\mathcal{H}(3,1)}:\pi_1^{orb}(\mathbb{P}\mathcal{H}(3,1))\rightarrow\operatorname{Mod}_{3,2}$ of the respective moduli spaces.
    
\end{proposition}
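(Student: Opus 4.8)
\textbf{Proof proposal for Proposition \ref{lo}.}

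The plan is to construct the forgetful map carefully and show that it is a bijection on orbifolds, i.e.\ an equivalence of orbifolds, and then check that it intertwines the two monodromy maps. The essential input is the discussion above: every point of $\mathbb{P}\mathcal{H}(3,1)$ gives a non-hyperelliptic genus $3$ curve $X$ together with a flex $p$ and the residual intersection point of $T_pX$ with $X$, which is exactly a point of $\mathcal{M}_3^{\operatorname{flex}}$; conversely, by Looijenga's observation a curve-with-flex $(X,p)$ in $\mathcal{M}_3^{\operatorname{flex}}$ determines the line $L$ with $L\cdot X = 3p + q$ uniquely from the data, hence determines the canonical divisor $3p+q$, hence (up to scale) the abelian differential $\omega_s$ vanishing to order $3$ at $p$ and order $1$ at $q$. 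So I would first write down the inverse map $X \mapsto (X,[\omega_X])$ explicitly and observe that the two assignments are mutually inverse as maps of underlying sets.

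Next I would upgrade this set-theoretic bijection to an isomorphism of orbifolds. The cleanest way is to exhibit both $\mathbb{P}\mathcal{H}(3,1)$ and $\mathcal{M}_3^{\operatorname{flex}}$ as quotients of the same parameter space: the affine space $\mathbb{C}^7$ of coefficients $s=(s_1,\dots,s_7)$ cut down to the open locus $U\subset\mathbb{C}^7$ where $Q_s$ is smooth, modulo the residual action of the group of coordinate changes preserving the normal form $Q_s$ (those fixing the flex at $[0:0:1]$ and the shape of the leading terms $x^3z + xy^3$). On the one hand this quotient is $\mathcal{M}_3^{\operatorname{flex}}$, because every non-hyperelliptic genus $3$ curve with a chosen flex is isomorphic to some $\mathbb{V}(Q_s)$ and two such are isomorphic as flex-pointed curves precisely when related by such a coordinate change; on the other hand the canonical differential $\omega_s$ depends algebraically on $s$ and is equivariant, so the same quotient is $\mathbb{P}\mathcal{H}(3,1)$. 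Matching the stabilizers on both sides — they are literally the same finite groups, namely the automorphisms of $(X,p)$ — gives the orbifold isomorphism, and in particular an isomorphism $\theta_1$ of orbifold fundamental groups.

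Finally, to see that $\theta_1$ intertwines $\rho^{\operatorname{flex}}$ and $\rho_{\mathbb{P}\mathcal{H}(3,1)}$, I would argue that both monodromies are induced by forgetful maps to $\mathcal{M}_{3,2}$ and that the relevant square of orbifolds commutes: the composite $\mathbb{P}\mathcal{H}(3,1)\to\mathcal{M}_3^{\operatorname{flex}}\to\mathcal{M}_{3,2}$ (remember the flex and residual point, then forget to the $2$-pointed moduli space) is by construction the same forgetful map $\mathbb{P}\mathcal{H}(3,1)\to\mathcal{M}_{3,2}$ that defines $\rho_{\mathbb{P}\mathcal{H}(3,1)}$, since the two marked points attached to $(X,\omega_s)$ are exactly the zeros of $\omega_s$, which are $p$ and $q$. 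Passing to orbifold fundamental groups and using functoriality of $\pi_1^{orb}$ gives the desired commuting triangle.

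The step I expect to be the main obstacle is the middle one: making precise that the bijection of sets is an isomorphism of \emph{orbifolds} rather than merely of coarse spaces. Concretely one must (i) exhibit compatible orbifold charts — the analytic family $s\mapsto(\mathbb{V}(Q_s),\omega_s)$ over $U$ and its universal property relative to the period charts $U_\tau$ of Section 4.2 — and (ii) verify that the isotropy groups agree, i.e.\ that an automorphism of the flex-pointed curve $(X,p)$ automatically preserves $[\omega_s]$ (immediate, since $\omega_s$ is canonically attached to $(X,p)$) and conversely that an automorphism of $(X,[\omega])$ fixes the flex (it permutes the zeros of $\omega$ of distinct orders $3$ and $1$, hence fixes each). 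These are not deep but need to be stated with enough care that "isomorphism on orbifolds" is justified; once the charts and stabilizers match, the statement about $\pi_1^{orb}$ and the commuting triangle follow formally.
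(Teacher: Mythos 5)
Your set-theoretic bijection and your verification that $\theta_1$ intertwines the two monodromies are correct and match the paper's intent, but your route to the orbifold isomorphism is genuinely different from the paper's. The paper does not touch the quartic parameter space at all here: it lifts the forgetful map to the Teichm\"{u}ller covers, observes that $(X,f,[\omega])\mapsto(X,f)$ is a bijective quotient map $\mathbb{P}\mathcal{H}^{\operatorname{Teich}}(3,1)\rightarrow\mathcal{T}_3^{\operatorname{flex}}$ and hence a homeomorphism, and notes that this homeomorphism is equivariant for the $\operatorname{Mod}_{3,2}$-actions (every marking occurring on one side occurs on the other), so the quotient orbifolds are isomorphic and both monodromies, being change-of-marking maps, automatically commute with $\theta_1$. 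Your alternative --- presenting both $\mathbb{P}\mathcal{H}(3,1)$ and $\mathcal{M}_3^{\operatorname{flex}}$ as the quotient of the smooth locus $U\subset\CC^7$ by the residual coordinate changes --- buys an explicit algebraic model, but it is considerably heavier than what the proposition needs: the residual group is the weighted $\CC^*$-action of relation (\ref{rel}), a positive-dimensional Lie group rather than a discrete group acting with finite stabilizers, so ``matching stabilizers'' does not by itself produce an isomorphism of orbifolds in the sense the paper uses (quotients of Teichm\"{u}ller covers by mapping class groups). Reconciling the quotient $U/\CC^*$ with the Teichm\"{u}ller-theoretic orbifold structure on $\mathcal{M}_3^{\operatorname{flex}}$ is essentially the content of Lemma \ref{sper} and Proposition \ref{li} later in the paper (where the $\CC^*$-quotient is handled via the long exact sequence of a fiber bundle and kills the central $\langle\Delta\rangle$), so your middle step, as you yourself suspect, is the real obstacle and would in effect import a large part of the rest of Section 5 into the proof of this one proposition. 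The paper's Teichm\"{u}ller-cover argument avoids all of this; if you adopt it, your steps (i) and (ii) on isotropy groups (an automorphism of $(X,p)$ preserves $[\omega]$, and an automorphism of $(X,[\omega])$ fixes each zero since they have distinct orders) become unnecessary, as equivariance of the lifted homeomorphism does the work.
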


\begin{proof}

    Let $\mathbb{P}\mathcal{H}^{\operatorname{Teich}}(3,1)$ be the Teichm\"{u}ller cover of the projective stratum $\mathbb{P}\mathcal{H}(3,1)$. If $\mathcal{T}_3^{\operatorname{flex}}$ is the Teichm\"{u}ller cover of $\mathcal{M}_3^{\operatorname{flex}}$, then the forgetful map
    \begin{align*}
        \mathbb{P}\mathcal{H}^{\operatorname{Teich}}(3,1)&\rightarrow\mathcal{T}_3^{\operatorname{flex}}\\
        (X,f,[\omega])&\mapsto(X,f)
    \end{align*}
    is a bijective quotient map and therefore a homeomorphism. Moreover, the monodromies $\rho^{\operatorname{flex}}$ and $\rho_{\mathbb{P}\mathcal{H}(3,1)}$ share the same image in $\operatorname{Mod}_{3,2}$ as every marking of a Riemann surface in $\mathbb{P}\mathcal{H}(3,1)$ appears as a marking of a Riemann surface in $\mathcal{M}_3^{\operatorname{flex}}$, and viceversa. Therefore, the forgetful map $\mathbb{P}\mathcal{H}^{\operatorname{Teich}}(3,1)\rightarrow\mathcal{T}_3^{\operatorname{flex}}$ induces an orbifold isomorphism and in particular an isomorphism between the respective orbifold fundamental groups.
    
\end{proof}

The collection of parameters $s\in\CC^7$ representing smooth quartics $Q_s$ is an Eilenberg-Maclane space for the spherical-type Artin group $A(E_7)$. In particular, the space $\{s\in\CC^7\mid  \mathbb{V}(Q_s)\text{ is smooth}\}$ has fundamental group isomorphic to $A(E_7)$ and can be homeomorphically realized as the complement of the complexified hyperplane arrangement $\cup_{i\in I}H_i$ of the root system $E_7$ modulo its reflection group $W(E_7)$ \cite[Proposition 9.3]{arnold}. We will briefly describe the homeomorphism between the space $\{s\in\CC^7\mid  \mathbb{V}(Q_s)\text{ is smooth}\}$ and the quotient of $\CC^7\setminus\cup_{i\in I}H_i$ by the group $W(E_7)$, as the construction is going to be used in Lemma \ref{sper}. 

The $\CC$-algebra of $W(E_7)$-invariant polynomials in $\CC[x_1,\dots,x_7]$ is generated by some homogeneous polynomials $q_1,\dots,q_7$ with degrees $d_i=\operatorname{deg}(q_i)$ uniquely determined by the finite group $W(E_7)$. The basis $\{q_1,\dots,q_7\}$ maps (in a neighborhood of zero) the quotient space $\CC^7/W(E_7)$ to $\CC^7$ by an isomorphism $\tau:\CC^7/W(E_7)\rightarrow\CC^7$ of complex manifolds. In particular, the image of the hyperplane arrangement $\cup_{i\in I}H_i$ modulo $W(E_7)$ is the hypersurface $\Pi=\{s\in\CC^7\mid  \mathbb{V}(Q_s)\text{ is singular}\}$ defined as the vanishing locus of a weighted homogeneous polynomial with weights given by the degrees $(d_1,\dots,d_7)$ of the homogeneous polynomials $\{q_1,\dots,q_7\}$; see, for example, \cite[Introduction]{discriminant} or \cite[Chapter 3]{arnoldbook}.

The complement $\mathbb{C}^7\setminus{\Pi}$ comes with a surface bundle with fibers diffeomorphic to $\Sigma_3^2$, as follows; for more details, see \cite[Section 2]{Cuadrado2021}. The intersection of the space $\{(p,s)\in\CC^2\times(\CC^7\setminus\Pi)\mid p\in\mathbb{V}(q_s)\}$ with a sufficiently small closed polydisk $\mathbb{D}^2\times\mathbb{D}^7$ in $\CC^2\times\CC^7$ is the total space of a fiber bundle with base space $\mathbb{C}^7\setminus{\Pi}$ and fibers diffeomorphic to $\Sigma_3^2$. It turns out that the monodromy $$\rho:\pi_1(\mathbb{C}^7\setminus{\Pi})\rightarrow\operatorname{Mod}_3^2$$ is a geometric homomorphism.  


If we glue a pair of open punctured disks to the boundary components of $\Sigma_3^2$ we obtain a punctured surface diffeomorphic to $\Sigma_{3,2}$. This procedure defines the capping homomorphism $\operatorname{Cap}:\operatorname{Mod}^2_3\rightarrow\operatorname{Mod}_{3,2}$ by extending the mapping classes in $\operatorname{Mod}_3^2$ to the be identity on the glued punctured disks. The proof of Theorem \ref{main} relies on the existence of a surjective homomorphism $$\theta:\pi_1(\CC^7\setminus\Pi)\rightarrow\pi_1^{orb}(\mathbb{P}\mathcal{H}(3,1))$$ such that the two monodromies $\rho:\pi_1(\CC^7\setminus\Pi)\rightarrow\operatorname{Mod}_3^2$ and $\rho_{\mathbb{P}\mathcal{H}(3,1)}:\pi_1^{orb}(\mathbb{P}\mathcal{H}(3,1))\rightarrow\operatorname{Mod}_{3,2}$ fit inside the following commutative diagram

\begin{center}
\begin{equation}
\label{comm}
\begin{tikzcd}
\pi_1(\CC^7\setminus\Pi) \arrow[r, "\theta"] \arrow[d, "\rho"]
& \pi_1^{orb}(\mathbb{P}\mathcal{H}(3,1)) \arrow[d, "\rho_{\mathbb{P}\mathcal{H}(3,1)}"] \\
\operatorname{Mod}_3^2 \arrow[r, "\operatorname{Cap}"]
& \operatorname{Mod}_{3,2}.
\end{tikzcd}  
\end{equation}
\end{center}

Let us define $\theta:\pi_1(\CC^7\setminus\Pi)\rightarrow\pi_1^{orb}(\mathbb{P}\mathcal{H}(3,1))$. We do so by composing two homomorphisms, where one of them has already been given in Proposition \ref{lo}. In what follows, we construct a surjective homomorphism $\theta_2:\pi_1(\CC^7\setminus\Pi)\rightarrow\pi_1^{orb}(\mathcal{M}^{\operatorname{flex}}_3)$. Then, the composition $\theta_1^{-1}\circ\theta_2$ will be the homomorphism $\theta$ we need in order to prove Theorem \ref{main}.

A pair of smooth quartics $Q_s$ and $Q_t$ might define the same isomorphism class of a Riemann surface. This is the case if and only if the parameters $s$ and $t$ are related by a weighted projective relation \cite[Proposition 1]{Shioda1993}. In particular, the vanishing loci $\mathbb{V}(Q_s)$ and $\mathbb{V}(Q_t)$ are isomorphic if and only if there exists $\lambda\in\CC^*$ such that 
\begin{equation}
\label{rel}
(s_1,s_2,s_3,s_4,s_5,s_6,s_7)=(\lambda t_1,\lambda^3t_2,\lambda^4t_3,\lambda^5t_4,\lambda^6t_5,\lambda^7t_6,\lambda^9t_7).
\end{equation}
The above relation is well-defined on $\Pi$. Indeed, the defining weighted polynomial of $\Pi$ has weights compatible with the weights of the relation in (\ref{rel}); we can see it by noticing that the weights given in (\ref{rel}) coincide with half the degrees $(d_1,\dots,d_7)$ of the homogeneous polynomials $q_1,\dots,q_7$ \cite[Section 3.7]{humphr}. In particular, the above relation is also well defined on $\CC^7\setminus\Pi$. 


Topologically, the weighted projective space obtained from the quotient of $\CC^7\setminus\Pi$ by the relation in (\ref{rel}) can be realized as a moduli space. In particular, it can be seen as the moduli space $\mathcal{M}^{\operatorname{flex}}_{3,\partial}$ of genus $3$ Riemann surfaces with $2$ boundary components and isomorphism classes given by the fibers of the surface bundle associated with $\CC^7\setminus\Pi$.

\begin{lemma}
\label{sper}
    The quotient map $l:\CC^7\setminus\Pi\rightarrow\mathcal{M}^{\operatorname{flex}}_{3,\partial}$ induces a surjective homomorphism $l_*:\pi_1(\CC^7\setminus\Pi)\rightarrow\pi_1(\mathcal{M}^{\operatorname{flex}}_{3,\partial})$ on the respective fundamental groups. 
\end{lemma}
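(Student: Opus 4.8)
The plan is to show that $l : \CC^7 \setminus \Pi \to \mathcal{M}^{\operatorname{flex}}_{3,\partial}$ is a fiber bundle projection with connected fiber, from which surjectivity on $\pi_1$ is immediate via the long exact sequence of the fibration. First I would unwind the construction: the relation (\ref{rel}) is the orbit relation of a $\CC^*$-action on $\CC^7 \setminus \Pi$, where $\lambda \in \CC^*$ acts by the weighted scaling $(s_1,\dots,s_7) \mapsto (\lambda s_1, \lambda^3 s_2, \lambda^4 s_3, \lambda^5 s_4, \lambda^6 s_5, \lambda^7 s_6, \lambda^9 s_7)$. Since $\Pi$ is the vanishing locus of a weighted-homogeneous polynomial with these very weights (as noted just before the lemma), the action preserves $\CC^7 \setminus \Pi$. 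The key point is that this action is \emph{free} on $\CC^7 \setminus \Pi$: a fixed point of a nontrivial $\lambda$ would have to lie in the coordinate subspaces where the non-fixed coordinates vanish, and I would check that all such points lie in $\Pi$ (equivalently, that the quartic $Q_s$ degenerates there), so no nontrivial $\lambda$ fixes any point of $\CC^7\setminus\Pi$. A free $\CC^*$-action with the orbits being the fibers of $l$ makes $l$ a principal $\CC^*$-bundle, hence in particular a fiber bundle.

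Next I would record the homotopy input. The fiber of $l$ is a single $\CC^*$-orbit, which is homeomorphic to $\CC^* \simeq S^1$, in particular connected and with $\pi_1 \cong \ZZ$. The long exact sequence of the fibration $\CC^* \hookrightarrow \CC^7\setminus\Pi \xrightarrow{l} \mathcal{M}^{\operatorname{flex}}_{3,\partial}$ reads
\begin{equation*}
\pi_1(\CC^*) \longrightarrow \pi_1(\CC^7\setminus\Pi) \xrightarrow{\ l_*\ } \pi_1(\mathcal{M}^{\operatorname{flex}}_{3,\partial}) \longrightarrow \pi_0(\CC^*) = 0,
\end{equation*}
so $l_*$ is surjective. (One should also remark that $\CC^7 \setminus \Pi$ is path-connected, being the complement of a proper complex-analytic subvariety in $\CC^7$, so all the $\pi_1$'s are taken at compatible basepoints and the statement makes sense.)

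The remaining care is in identifying the quotient topologically with the stated moduli space $\mathcal{M}^{\operatorname{flex}}_{3,\partial}$. Here I would invoke the description already built up in the text: a point of $\CC^7\setminus\Pi$ gives a smooth quartic $\mathbb{V}(Q_s)$ with a distinguished flex configuration and, removing (resp.\ capping with boundary) the two points at infinity, a genus-$3$ surface with $2$ boundary components carrying the differential $\omega_s$; by Shioda's criterion two parameters give isomorphic such data exactly when they differ by (\ref{rel}). So the quotient $(\CC^7\setminus\Pi)/\CC^*$ is set-theoretically the moduli space of these marked surfaces, and since $l$ is an open quotient map onto it (being a bundle projection), the quotient topology agrees with the moduli-space topology. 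This matching of the abstract quotient with $\mathcal{M}^{\operatorname{flex}}_{3,\partial}$ — making sure the orbit space really is the claimed moduli space and not merely a weighted projective space abstractly — is the step I expect to be the main obstacle, though it is mostly a matter of carefully citing the identifications (Proposition \ref{lo}, \cite{Shioda1993}, \cite{Cuadrado2021}) already assembled above rather than proving something genuinely new.
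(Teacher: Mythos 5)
Your overall strategy (exhibit $l$ as a fibration with connected fiber and apply the long exact sequence) is the same as the paper's, but the step you flag as a routine verification is in fact false, and it is the load-bearing step of your argument. The weighted $\CC^*$-action with weights $(1,3,4,5,6,7,9)$ is \emph{not} free on $\CC^7\setminus\Pi$. Take $s=(0,0,0,0,0,0,1)$: the quartic $Q_s=x^3z+xy^3+z^4$ is smooth (the partials $3x^2z+y^3$, $3xy^2$, $x^3+4z^3$ have no common projective zero), so $s\notin\Pi$, yet $s$ is fixed by every ninth root of unity since only the weight-$9$ coordinate is nonzero. Similar fixed points off $\Pi$ exist for third roots of unity. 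This reflects the existence of smooth plane quartics with large automorphism groups, and upstream it corresponds to regular elements of $W(E_7)$ having eigenvectors off the reflection hyperplanes. So $l$ is not a principal $\CC^*$-bundle, and your long exact sequence does not apply as stated; the quotient is only an orbifold/weighted-projective quotient at such points.

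The paper circumvents this by not working with the weighted action downstairs at all: using the isomorphism $\tau:\CC^7/W(E_7)\rightarrow\CC^7$ (the weights being half the degrees of the basic invariants), it identifies $l$ with the map
\begin{equation*}
\faktor{\CC^7\setminus\cup_{i\in I}H_i}{W(E_7)}\longrightarrow\faktor{\mathbb{P}(\CC^7\setminus\cup_{i\in I}H_i)}{W(E_7)},
\end{equation*}
descended from the tautological $\CC^*$-bundle $\CC^7\setminus\cup_{i\in I}H_i\rightarrow\mathbb{P}(\CC^7\setminus\cup_{i\in I}H_i)$, where the relevant $\CC^*$-action is the honest linear scaling, which is free away from the origin (and the origin lies on every hyperplane). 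Connectivity of the fibers of $l$ and surjectivity on $\pi_1$ are then read off from this picture. If you want to salvage your version, you would have to either pass to this cover as the paper does, or replace the fibration argument by a statement robust enough to handle the orbifold points of the weighted quotient; as written, the claim ``a fixed point of a nontrivial $\lambda$ lies in $\Pi$'' is where the proof breaks.
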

\begin{proof}

The weighted projective relation defined in (\ref{rel}) on $\CC^7$ pulls back to a projective relation on the quotient $\CC^7/W(E_7)$ via the isomorphism $\tau:\CC^7/W(E_7)\rightarrow\CC^7$. In other words, the isomorphism $\tau$ induces a homeomorphism between the weighted projective space defined by (\ref{rel}) and $\mathbb{CP}^6$ modulo the induced linear action of $W(E_7)$.

Then, the quotient map $l$ can also be seen as the map
\begin{align*}
    l:\faktor{\CC^7\setminus\cup_{i\in I}H_i}{W(E_7)}\longrightarrow\faktor{\mathbb{P}(\CC^7\setminus\cup_{i\in I}H_i)}{W(E_7),}
\end{align*}
where $\mathbb{P}(\CC^7\setminus\cup_{i\in I}H_i)$ is the projectivization of the space $\CC^7\setminus\cup_{i\in I}H_i$. 

The map $l$ descends from the fiber bundle $\CC^7\setminus\cup_{i\in I}H_i\rightarrow\mathbb{P}(\CC^7\setminus\cup_{i\in I}H_i)$ via the free action of the finite group $W(E_7)$ and has connected fibers. In particular, the map $l$ is a fiber bundle with connected fibers and the induced homomorphism on the fundamental groups is surjective by applying the long exact sequence associated with $l$.




\end{proof}


The Teichm\"{u}ller cover of $\mathcal{M}^{\operatorname{flex}}_{3,\partial}$ will be denoted by $\mathcal{T}_{3,\partial}^{\operatorname{flex}}$ in the following proof.


\begin{proposition}
\label{li}
    Let $\rho^{\operatorname{flex}}:\pi_1^{orb}(\mathcal{M}_3^{\operatorname{flex}})\rightarrow\operatorname{Mod}_{3,2}$ be the monodromy of $\mathcal{M}_3^{\operatorname{flex}}$ and let $\rho:\pi_1(\CC^7\setminus\Pi)\rightarrow\operatorname{Mod}_3^2$ denote the monodromy of $\CC^7\setminus\Pi$.
    There exists a surjective homomorphism $\theta_2:\pi_1(\CC^7\setminus\Pi)\rightarrow\pi_1^{orb}(\mathcal{M}_3^{\operatorname{flex}})$ that commutes with the respective monodromies. In particular, the following diagram commutes

\begin{center}
\begin{equation*}
\label{comm}
\begin{tikzcd}
\pi_1(\CC^7\setminus\Pi) \arrow[r, "\theta_2"] \arrow[d, "\rho"]
& \pi_1^{orb}(\mathcal{M}_3^{\operatorname{flex}}) \arrow[d, "\rho^{\operatorname{flex}}"] \\
\operatorname{Mod}_3^2 \arrow[r, "\operatorname{Cap}"]
& \operatorname{Mod}_{3,2}.
\end{tikzcd}  
\end{equation*}
\end{center}
\end{proposition}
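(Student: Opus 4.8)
The plan is to build $\theta_2$ as a composition of maps whose individual properties we already have in hand. First, I would combine Lemma \ref{sper} with the isomorphism $\theta_1$ of Proposition \ref{lo}: the quotient map $l:\CC^7\setminus\Pi\to\mathcal{M}^{\operatorname{flex}}_{3,\partial}$ induces a surjection $l_*$ on $\pi_1$, and I claim $\mathcal{M}^{\operatorname{flex}}_{3,\partial}$ is, in the orbifold sense, the same as $\mathbb{P}\mathcal{H}(3,1)$ (hence as $\mathcal{M}^{\operatorname{flex}}_3$) after capping the two boundary circles to punctures. Concretely, I would observe that forgetting the boundary parametrisation is a map $\mathcal{T}^{\operatorname{flex}}_{3,\partial}\to\mathcal{T}^{\operatorname{flex}}_3$ between the Teichm\"uller covers (equivalently $\mathbb{P}\mathcal{H}^{\operatorname{Teich}}(3,1)$ via Proposition \ref{lo}), which descends to an orbifold map upstairs; the induced map on orbifold fundamental groups gives a homomorphism $\mathrm{cap}_*:\pi_1(\mathcal{M}^{\operatorname{flex}}_{3,\partial})\to\pi_1^{orb}(\mathcal{M}^{\operatorname{flex}}_3)$. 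Setting $\theta_2 = \mathrm{cap}_*\circ l_*$ then gives a homomorphism $\pi_1(\CC^7\setminus\Pi)\to\pi_1^{orb}(\mathcal{M}^{\operatorname{flex}}_3)$, and surjectivity is automatic: $l_*$ is surjective by Lemma \ref{sper}, and $\mathrm{cap}_*$ is surjective because every marked flex-quartic in $\mathcal{M}^{\operatorname{flex}}_3$ lifts to one with boundary parametrisation, exactly as in the image-comparison argument already used in Proposition \ref{lo}.

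Next I would verify commutativity of the square. The monodromy $\rho:\pi_1(\CC^7\setminus\Pi)\to\operatorname{Mod}_3^2$ is the monodromy of the $\Sigma_3^2$-bundle over $\CC^7\setminus\Pi$ described above, and $\rho^{\operatorname{flex}}:\pi_1^{orb}(\mathcal{M}^{\operatorname{flex}}_3)\to\operatorname{Mod}_{3,2}$ is the monodromy of the universal $\Sigma_{3,2}$-family over $\mathcal{M}^{\operatorname{flex}}_3$ (which, via $\theta_1$, is $\rho_{\mathbb{P}\mathcal{H}(3,1)}$). The point is that the $\Sigma_3^2$-bundle pulls back from $\mathcal{M}^{\operatorname{flex}}_{3,\partial}$ along $l$ — because the fibre over $s$ is by construction $\mathbb{V}(q_s)$ intersected with a polydisc, which depends only on the isomorphism class of the boundary-parametrised surface, i.e.\ on $l(s)$ — and capping the two boundary disks to punctures turns the $\Sigma_3^2$-family into the $\Sigma_{3,2}$-family over $\mathcal{M}^{\operatorname{flex}}_{3,\partial}$, which in turn pushes forward along $\mathrm{cap}$ to the universal family over $\mathcal{M}^{\operatorname{flex}}_3$. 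Chasing these bundle identifications through the functoriality of monodromy gives $\operatorname{Cap}\circ\rho = \rho^{\operatorname{flex}}\circ\theta_2$ on generators, hence on all of $\pi_1(\CC^7\setminus\Pi)$.

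The main obstacle, I expect, is making the identification $\mathcal{M}^{\operatorname{flex}}_{3,\partial}\cong \mathcal{M}^{\operatorname{flex}}_3$ (after capping) genuinely rigorous at the level of orbifolds rather than just coarse spaces: one must check that the Teichm\"uller-level forgetful map $\mathcal{T}^{\operatorname{flex}}_{3,\partial}\to\mathcal{T}^{\operatorname{flex}}_3$ is equivariant for the relevant mapping class group actions (boundary-fixing versus puncture-fixing, related by the capping homomorphism and the Birman exact sequence for the boundary Dehn twists) and that the finite point-stabilisers correspond correctly, so that the induced map on orbifold $\pi_1$ is well defined and compatible with $\operatorname{Cap}$. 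I would handle this by working explicitly with the polydisc model: the bundle $\{(p,s)\in\DD^2\times\DD^7 \mid p\in\mathbb{V}(q_s)\}\to\CC^7\setminus\Pi$ has a canonical trivialisation of its boundary circles coming from the ambient $\DD^2$-coordinate, which is precisely the data that distinguishes $\mathcal{M}^{\operatorname{flex}}_{3,\partial}$ from $\mathcal{M}^{\operatorname{flex}}_3$; forgetting it is the capping operation, and the surjectivity and commutativity statements then follow from the long exact sequences of the two bundles together with the naturality of the capping homomorphism, exactly parallel to the argument already given for Lemma \ref{sper} and Proposition \ref{lo}.
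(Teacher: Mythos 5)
Your proposal follows essentially the same route as the paper: you factor $\theta_2$ as the surjection $l_*$ from Lemma \ref{sper} followed by a capping map induced by the boundary-collapsing projection $\mathcal{T}^{\operatorname{flex}}_{3,\partial}\to\mathcal{T}^{\operatorname{flex}}_3$, obtain surjectivity of the second factor by comparing markings exactly as in Proposition \ref{lo}, and get commutativity of the square from the bundle-map description of $l$ together with naturality of $\operatorname{Cap}$. The only small point the paper makes that you flag as an "obstacle" rather than resolve is that $\operatorname{Mod}_3^2$ is torsion-free, so $\mathcal{M}^{\operatorname{flex}}_{3,\partial}$ has no singular orbifold structure and its orbifold fundamental group is just $\pi_1$, which disposes of the equivariance worry directly.
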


\begin{proof}

The surjective homomorphism $l_*:\pi_1(\CC^7\setminus\Pi)\rightarrow\pi_1(\mathcal{M}^{\operatorname{flex}}_{3,\partial})$ is induced by the quotient map, and in particular it is induced by a bundle map between surface bundles with isomorphic fibers. Therefore, the monodromies of $\CC^7\setminus\Pi$ and $\mathcal{M}^{\operatorname{flex}}_{3,\partial}$ must commute through $l_*$.

The group $\operatorname{Mod}_3^2$ is torsion-free and therefore the orbifold structure of $\mathcal{M}^{\operatorname{flex}}_{3,\partial}$ is not singular. In particular, the orbifold fundamental group of $\mathcal{M}^{\operatorname{flex}}_{3,\partial}$ can be identified with its fundamental group $\pi_1(\mathcal{M}_{3,\partial}^{\operatorname{flex}})$. In other words, if $\rho^{\operatorname{flex}}_{\partial}$ is the monodromy of the moduli space $\mathcal{M}^{\operatorname{flex}}_{3,\partial}$, the diagram

\[
    \begin{tikzcd}[column sep=1em]
\pi_1(\CC^7\setminus\Pi) \arrow[rr, "l_*"] \arrow[dr, "\rho"']& & \pi_1(\mathcal{M}_{3,\partial}^{\operatorname{flex}})\arrow[ld, "\rho^{\operatorname{flex}}_{\partial}"]\\
 & \operatorname{Mod}_3^2
\end{tikzcd}
    \]

must commutes.

Suppose now $\mathcal{T}_{3,\partial}^{\operatorname{flex}}$ and $\mathcal{T}_3^{\operatorname{flex}}$ are the Teichm\"{u}ller covers of $\mathcal{M}^{\operatorname{flex}}_{3,\partial}$ and $\mathcal{M}^{\operatorname{flex}}_3$, respectively. There exists a map $\mathcal{T}_{3,\partial}^{\operatorname{flex}}\rightarrow\mathcal{T}_3^{\operatorname{flex}}$ given by collapsing the lengths of the boundary components to zero. In particular, this map is the restriction of the classic projection given on the respective global Teichm\"{u}ller spaces where the preimage of $\mathcal{T}_3^{\operatorname{flex}}$ is exactly $\mathcal{T}_{3,\partial}^{\operatorname{flex}}$. Hence, the induced map $\pi_1(\mathcal{T}_{3,\partial}^{\operatorname{flex}})\rightarrow\pi_1(\mathcal{T}_{3}^{\operatorname{flex}})$ on the fundamental groups is surjective.

Consider the images of $\rho^{\operatorname{flex}}_{\partial}$ in $\operatorname{Mod}_{3}^2$ and of $\rho^{\operatorname{flex}}$ in $\operatorname{Mod}_{3,2}$. Every marking of a Riemann surface in $\mathcal{M}^{\operatorname{flex}}_3$ appears as the image of a marking associated with a Riemann surface in $\mathcal{M}^{\operatorname{flex}}_{3,\partial}$. Then, the restriction of the homomorphism $\operatorname{Cap}:\operatorname{Mod}_3^2\rightarrow\operatorname{Mod}_{3,2}$ on $\operatorname{im}\rho^{\operatorname{flex}}_{\partial}$ is surjective onto $\operatorname{im}\rho^{\operatorname{flex}}$ and the homomorphism $\pi_1^{orb}(\mathcal{M}^{\operatorname{flex}}_{3,\partial})\rightarrow\pi_1^{orb}(\mathcal{M}^{\operatorname{flex}}_3)$ must be surjective too.


\end{proof}

Our final goal is to prove Theorem \ref{main}. In particular, we will show that the monodromy $\rho_{\mathbb{P}\mathcal{H}(3,1)}$ is geometric. We are going to prove Theorem \ref{main} using the following lemma.

\begin{lemma}
\label{saveass}
Every surjective endomorphism of $A(E_7)_\Delta$ is an isomorphism.
\end{lemma}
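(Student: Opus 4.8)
The plan is to use the fact that $A(E_7)$ is a spherical-type Artin group whose center is infinite cyclic, generated by a power of the Garside element $\Delta$, together with the structure of $A(E_7)/[A(E_7),A(E_7)]$ and rigidity coming from the Garside/acylindrical geometry. Write $G = A(E_7)_\Delta$. First I would recall that $A(E_7)^{ab} \cong \ZZ$, via the degree homomorphism $\operatorname{deg}\colon A(E_7)\to\ZZ$ of Section 1 (all defining relations of $E_7$ are homogeneous, and by \cite[Proposition 3.1]{Mulholland2002} the commutator subgroup is exactly $\ker\operatorname{deg}$). Since the center $Z(A(E_7)) = \langle \Delta^2\rangle$ is generated by an element of positive degree, the abelianization of $G = A(E_7)/Z$ is a finite cyclic group; in particular $G^{ab}$ is finite, and every surjective endomorphism of $G$ induces a surjective, hence bijective, endomorphism of the finite group $G^{ab}$.

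Next, let $\varphi\colon G \twoheadrightarrow G$ be a surjective endomorphism; I want to show $\varphi$ is injective. The key structural input is that $G$ is acylindrically hyperbolic (Theorem \ref{acyartin}) and, more to the point, that parabolic subgroups and their normalizers are well understood. I would lift the problem to $A(E_7)$: a surjection $G\twoheadrightarrow G$ together with Hopficity considerations. Spherical-type Artin groups are known to be Hopfian — this follows, for instance, from their linearity (they are linear via the Lawrence–Krammer representation / BMW algebras, or one can cite residual finiteness of Artin groups of spherical type), and finitely generated linear groups are Hopfian by Malcev. The quotient $G = A(E_7)_\Delta$ is then also finitely generated and linear (a central quotient of a linear group need not be linear in general, but $Z$ here is a finitely generated virtually-central subgroup and one can instead argue that $G$ is Hopfian because $A(E_7)$ is Hopfian and $Z$ is the full center, which is characteristic: any surjection $G\to G$ lifts to a surjection $A(E_7)\to A(E_7)$ after pulling back along the central extension, using that the extension class is determined by $G$). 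A surjective endomorphism of a Hopfian group is an isomorphism, so $\varphi$ is an isomorphism.

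The main obstacle, and the step I would spend the most care on, is the lifting claim: showing that a surjective endomorphism $\varphi$ of $G = A(E_7)/Z$ lifts to an endomorphism of $A(E_7)$, or alternatively that $G$ itself is Hopfian directly. The cleanest route is probably to bypass lifting entirely and prove $G$ is Hopfian intrinsically: $A(E_7)$ is residually finite (spherical-type Artin groups are residually finite, e.g. by linearity), and a central quotient $A(E_7)/\langle\Delta^{2k}\rangle$ of a residually finite group by a finitely generated central subgroup is again residually finite provided the subgroup is \emph{closed} in the profinite topology — for spherical Artin groups the cyclic center is known to be closed (it is even a retract of a suitable finite-index subgroup in many cases), so $G$ is residually finite; being also finitely generated, $G$ is Hopfian by Malcev's theorem, and the result follows. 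If that residual-finiteness-of-the-quotient step is delicate, the fallback is to invoke acylindrical hyperbolicity together with the classification of normal subgroups / the fact that $G$ has no proper finite-index-index-like pathologies, but residual finiteness of $G$ is the expected and most quotable ingredient, so I would build the argument around establishing it.
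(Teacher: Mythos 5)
Your final route and the paper's agree on the skeleton: establish that $A(E_7)_\Delta$ is finitely generated and residually finite, then invoke Malcev's theorem that such groups are Hopfian (the paper cites \cite[Chapter III, Proposition 7.5]{metricbridson} for this last step, and gets residual finiteness of $A(E_7)$ from linearity \cite{Cohen2002}, as you do). Where you diverge is the mechanism for pushing residual finiteness down to the central quotient, and this is where your write-up has a genuine gap. You assert that the cyclic center is ``known to be closed'' in the profinite topology, offering only the vague parenthetical that it is ``a retract of a suitable finite-index subgroup in many cases''; as written this is the load-bearing step and it is unproved. The paper sidesteps the issue entirely with a slicker observation: $A(E_7)_\Delta = A(E_7)/Z(A(E_7)) \cong \operatorname{Inn}(A(E_7))$ is a subgroup of $\operatorname{Aut}(A(E_7))$, and Baumslag's theorem \cite[Theorem 1]{BaumslagResidually1963} says the automorphism group of a finitely generated residually finite group is residually finite, so the inner automorphism group inherits residual finiteness as a subgroup. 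Your closedness claim is in fact true and repairable --- in any residually finite group the centralizer $C_G(g)$ is the preimage of the closed set $\{1\}$ under the continuous map $h\mapsto [h,g]$, so the center is an intersection of closed sets and hence closed, which is exactly what is needed for $G/Z(G)$ to be residually finite --- but you should supply that argument rather than gesture at retracts. Two smaller points: the lifting discussion in your middle paragraph is a dead end (you correctly suspect central quotients of linear groups need not be linear, and the proposed lift via ``the extension class'' does not go through), and for $E_7$ the center is generated by $\Delta$ itself rather than $\Delta^2$, though neither issue affects the corrected argument. The opening paragraph on the finite abelianization of $A(E_7)_\Delta$ is correct but plays no role in the proof.
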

\begin{proof}
Both $A(E_7)$ and the automorphism group $\operatorname{Aut}(A(E_7))$ are residually finite \cite[Theorem 1]{BaumslagResidually1963} because $A(E_7)$ is linear \cite{Cohen2002}. Hence, the inner subgroup $A(E_7)_\Delta$ of $\operatorname{Aut}(A(E_7))$ is both finitely generated and residually finite. In particular, we can conclude that every surjective endomorphism of $A(E_7)_\Delta$ is an isomorphism \cite[Chapter III, Proposition 7.5]{metricbridson}.

\end{proof}

\begin{proof}[Proof of Theorem \ref{main}] Since $\CC^7\setminus\Pi$ is an Eilenberg-Maclane space for the Artin group $A(E_7)$, the fundamental group $\pi_1(\CC^7\setminus\Pi)$ is isomorphic to $A(E_7)$. Moreover, from Theorem \ref{lomo} we know that $\pi_1^{orb}(\mathbb{P}\mathcal{H}(3,1))$ is isomorphic to $A(E_7)_\Delta$. 

Let us consider the surjective homomorphism $\theta:\pi_1(\CC^7\setminus\Pi)\rightarrow\pi_1^{orb}(\mathbb{P}\mathcal{H}(3,1))$ as the composition $$\pi_1(\CC^7\setminus\Pi)\xrightarrow{\theta_2}\pi^{orb}(\mathcal{M}^{\operatorname{flex}}_3)\xrightarrow{\theta_1^{-1}}\pi_1^{orb}(\mathbb{P}\mathcal{H}(3,1)).$$

The group $A(E_7)_\Delta$ is centerless. Therefore, the kernel of the homomorphism $\theta:A(E_7)\rightarrow A(E_7)_\Delta$ contains the subgroup $\langle\Delta\rangle$. This implies that the induced map $$\overline{\theta}:A(E_7)_\Delta\rightarrow A(E_7)_\Delta$$
is a well-defined surjective endomorphism of $A(E_7)_\Delta$ and therefore an isomorphism by Lemma \ref{saveass}. 

Since $\theta$ commutes with the monodromies $\rho:\pi_1(\CC^7\setminus\Pi)\rightarrow\operatorname{Mod}_3^2$ and $\rho_{\mathbb{P}\mathcal{H}(3,1)}:\pi_1^{orb}(\mathbb{P}\mathcal{H}(3,1))\rightarrow\operatorname{Mod}_{3,2}$ through the capping homomorphism $\operatorname{Cap}:\operatorname{Mod}_3^2\rightarrow\operatorname{Mod}_{3,2}$, we can conclude that $\rho_{\mathbb{P}\mathcal{H}(3,1)}$ is geometric since $\rho$ is.

\end{proof}


\Addresses

\end{document}